\documentclass[11pt,a4paper]{article}
\usepackage[a4paper,margin=30mm]{geometry}

\usepackage[T1]{fontenc}
\usepackage[utf8]{inputenc}
\usepackage{lmodern}
\usepackage{microtype}

\usepackage{amsmath,amssymb,amsfonts,amsthm}
\usepackage{bm,stmaryrd,euscript}
\usepackage{graphicx}
\usepackage{subcaption}
\usepackage{placeins}
\usepackage{cite}
\usepackage[hidelinks]{hyperref}
\usepackage{xcolor}
\usepackage{tikz}
\usetikzlibrary{arrows.meta,angles,quotes,patterns}

\newtheorem{theorem}{Theorem}[section]

\newtheorem{lemma}{Lemma}[section]

\theoremstyle{definition}
\newtheorem{assumption}{Assumption}[section]

\theoremstyle{remark}

\newtheorem{myremark}{Remark}[section]

\numberwithin{equation}{section}
\allowdisplaybreaks
\definecolor{green}{rgb}{0.0,0.50,0.0}
\tikzset{>={Straight Barb[angle'=80, scale=1.1]}}
\newcommand{\vertiii}[1]{{\left\vert\kern-0.25ex\left\vert\kern-0.25ex\left\vert #1 
    \right\vert\kern-0.25ex\right\vert\kern-0.25ex\right\vert}}
\newcommand{\tDelta}{\tilde{\Delta}_h}

\def\R {{\mathbb R}}

\def\le{\leqslant}
\def\ge{\geqslant}
\def\Omega{\varOmega}
\def\Delta{\varDelta}

\title{\bf 
An \({\bf H^{-1}}\) 
least-squares UnCut FEM on domains defined by a level set function\thanks{This research was partially supported by the National Natural Science Foundation of China (Grant No. 12525111) and by the AMSS--PolyU Joint Laboratory. Email address: buyang.li@polyu.edu.hk}
}

\author{
Jiashun Hu 
\qquad
Buyang Li 
\qquad
Han Yang 
\\[1.2em]
Department of Applied Mathematics, 
The Hong Kong Polytechnic University
}

\date{}

\hypersetup{
  pdftitle={
    An H-inverse least-squares UnCut FEM
    on domains defined by a level set function
  },
  pdfauthor={Jiashun Hu, Buyang Li, and Han Yang}
}

\begin{document}
\maketitle

\begin{abstract}
We propose a novel UnCut finite element method (FEM) for the Poisson and Stokes equations on domains with curved boundaries represented by a level set function. Like the $\phi$-FEM, the method avoids numerical integration over cut subregions of boundary elements, but introduces a novel least-squares formulation that minimizes an $H^{-1}$-residual of the governing equations. This formulation ensures stability without requiring large stabilization parameters, thereby eliminating the need for user-tuned penalty parameters and improving the robustness of the computation. Optimal-order convergence of the UnCut FEM solutions is rigorously established in the $H^1$ norm for both the Poisson and Stokes equations, and numerical experiments are presented to support the theoretical analysis.
\end{abstract}

\vspace{1.5em}
\noindent\textbf{Mathematics Subject Classification}
{\ 65N12 $\cdot$ 65N15 $\cdot$ 65N85}

\section{Introduction}
This article is concerned with the numerical solutions of the Poisson and Stokes problems, namely, 
\begin{subequations}\label{Poisson}
\begin{align}
-\Delta u &= f \qquad\text{in } \Omega,\\
\label{2.1-1}
u &= 0 \qquad\text{on } \partial\Omega,
\end{align}
\end{subequations}
and
\begin{subequations}\label{Stokes}
\begin{align}
-2\operatorname{div} D u + \nabla p &= f \qquad\text{in } \Omega,\\
\nabla\!\cdot u &= 0 \qquad\text{in } \Omega,\\
u &= 0 \qquad \text{on } \partial\Omega,\label{3.1-3}
\end{align}
\end{subequations}
with \( Du := (\nabla u + \nabla u^{\rm T})/2 \), on a smooth domain $\Omega\subset\R^d$ implicitly described by a level set function $\phi:\R^d\rightarrow\R$, i.e., 
$\Omega=\{x\in\R^d:\phi(x)<0\}.$ 
Constructing fitted meshes for such domains is often computationally demanding. This difficulty has led to extensive research on unfitted finite element methods (FEMs) that do not require the computational mesh to fit the boundary. These methods employ a fixed background mesh, making them particularly attractive for problems involving intricate geometries or moving interfaces. 

Unfitted finite element methods in the discontinuous Galerkin framework were first introduced in \cite{HH2002CMAME} and have since attracted considerable attention. 
Over the past two decades, a wide range of unfitted finite element methods have been developed, most notably CutFEM
\cite{BCH+2015IJNME,BVM2018CMAME,BHL2022NM,BHL2023MC,BHL2024SJNAa,BHL+2025AN} and XFEM \cite{XFEM1,XFEM2,XFEM4}. 
For a comprehensive overview of recent developments, we refer to \cite{BHL+2025AN}.
A central challenge in these methods is the so-called small cut cell problem, which arises from arbitrarily small or highly anisotropic intersections between the boundary and background mesh elements.
To address this issue, various stabilization strategies have been proposed, including ghost penalty techniques \cite{MLL+2014JSC,BCH+2015IJNME,GSM2020SJSC} and cell-aggregation or merging approaches \cite{CLX2021NM,CL2023JCP,JL2013NM,CL2024ANM}, in which small cut cells are combined with neighboring elements to handle conditioning issues for higher-order elements.
Beyond steady-state problems, unfitted formulations have also been extended to time-dependent parabolic and Stokes equations
\cite{LO2019EM,MZ2022JCP,MZZ2022SJNA,OW2025CMAM} and fluid-structure interactions \cite{BURMAN2014497,FL2019IMA}.
For Stokes problems, unfitted discretizations involve additional theoretical challenges beyond the small cut cell problem. A fundamental difficulty is establishing the inf-sup stability condition on geometrically unfitted meshes, which has been addressed in \cite{GO2017MC,NO2024IJNA}. Another important consideration is the preservation of the divergence-free constraint.
In \cite{LNO2023EM}, the Scott--Vogelius element pair within the CutFEM framework
was shown to yield a nearly divergence-free velocity field.

Despite their flexibility and robustness, many unfitted FEMs remain challenging in practice. In particular, they require the accurate evaluation of integrals over the physical domain $\Omega$, which involves integration on curved cut cells, as illustrated in Figure~\ref{geo1}.
This typically necessitates specialized quadrature rules, which complicates the implementation of these methods.

\def\drawmeshlines{
    \draw[step=1, black!40, thick] (0,0) grid (\nx,\ny);
    \foreach \x in {0,...,7} {
        \foreach \y in {0,...,6} {
            \draw[black!40, thin] (\x,\y) -- (\x+1,\y+1);
        }
    }
}
\begin{figure}[!htbp]
  \centering
\begin{tikzpicture}[x=0.5cm,y=0.5cm]
\def\nx{8}
\def\ny{7}
\coordinate (C) at (4,3.5);
\def\a{2.6}
\def\b{1.9}
\def\cutcells{
  2/2,2/3,2/4,
  3/1,3/2,3/4,3/5,
  4/1,4/5,
  5/1,5/5,
  6/1,6/2,6/4,6/5,
  7/2,7/3,7/4
}
\def\fullcells{
  2/2,2/3,2/4,
  3/1,3/2,3/3,3/4,3/5,
  4/1,4/2,4/3,4/4,4/5,
  5/1,5/2,5/3,5/4,5/5,
  6/1,6/2,6/3,6/4,6/5,
  7/2,7/3,7/4
}

\def\cuttriangles{
  1/2/ 2/2/ 2/3,  1/2/ 1/3/ 2/3,
  1/3/ 2/3/ 2/4,  1/3/ 1/4/ 2/4,
  1/4/ 2/4/ 2/5, 
  2/1/ 3/1/ 3/2,  2/1/ 2/2/ 3/2,
  2/2/ 3/2/ 3/3,  2/2/ 2/3/ 3/3,
  2/4/ 2/5/ 3/5,
  2/5/ 3/5/ 3/6, 
  3/1/ 4/1/ 4/2,  3/1/ 3/2/ 4/2,
  3/5/ 4/5/ 4/6,  3/5/ 3/6/ 4/6,
  4/1/ 5/1/ 5/2,  4/1/ 4/2/ 5/2,
  4/5/ 5/5/ 5/6,  4/5/ 4/6/ 5/6,
  5/1/ 5/2/ 6/2,
  5/2/ 6/2/ 6/3,
  5/4/ 6/4/ 6/5,  5/4/ 5/5/ 6/5,
  5/5/ 6/5/ 6/6,  5/5/ 5/6/ 6/6,
  6/2/ 6/3/ 7/3,
  6/3/ 7/3/ 7/4,  6/3/ 6/4/ 7/4,
  6/4/ 7/4/ 7/5,  6/4/ 6/5/ 7/5
}

\begin{scope}
  \drawmeshlines
   \foreach \ax/\ay/\bx/\by/\cx/\cy in \cuttriangles {
   \begin{scope}
   \clip (\ax,\ay) -- (\bx,\by) -- (\cx,\cy) -- cycle;
      \path[fill=cyan!45,draw=none,even odd rule]
         (\ax,\ay) -- (\bx,\by) -- (\cx,\cy) -- cycle
        (C) ellipse [x radius=\a, y radius=\b];
  \end{scope}
  
  \begin{scope}
  \clip (C) ellipse [x radius=\a, y radius=\b];
  \clip (\ax,\ay) -- (\bx,\by) -- (\cx,\cy) -- cycle;
  \fill[pattern=north west lines, pattern color=black!70]
    (\ax,\ay) -- (\bx,\by) -- (\cx,\cy) -- cycle;
\end{scope}
  }
  \draw[very thick] (C) ellipse [x radius=\a, y radius=\b];
\end{scope}

\begin{scope}[shift={(10.3,0)}] 
  \foreach \ax/\ay/\bx/\by/\cx/\cy in \cuttriangles {
      \path[pattern=north east lines, pattern color=red] 
           (\ax,\ay) -- (\bx,\by) -- (\cx,\cy) -- cycle;
  }
  \drawmeshlines
\draw[blue, thick,line width=1.5pt]
(1,2) -- (2,3)
  (1,3) -- (2,3)
  (1,3) -- (2,4)
  (1,4) -- (2,4)
  (2,1) -- (3,2)
  (2,2) -- (2,3)
  (2,2) -- (3,2)
  (2,2) -- (3,3)
  (2,3) -- (2,4)
  (2,3) -- (3,3)
  (2,4) -- (2,5)
  (2,4) -- (3,5)
  (2,5) -- (3,5)
  (3,1) -- (3,2)
  (3,1) -- (4,2)
  (3,2) -- (3,3)
  (3,2) -- (4,2)
  (3,5) -- (3,6)
  (3,5) -- (4,5)
  (3,5) -- (4,6)
  (4,1) -- (4,2)
  (4,1) -- (5,2)
  (4,2) -- (5,2)
  (4,5) -- (4,6)
  (4,5) -- (5,5)
  (4,5) -- (5,6)
  (5,1) -- (5,2)
  (5,2) -- (6,2)
  (5,2) -- (6,3)
  (5,4) -- (5,5)
  (5,4) -- (6,4)
  (5,4) -- (6,5)
  (5,5) -- (5,6)
  (5,5) -- (6,5)
  (5,5) -- (6,6)
  (6,2) -- (6,3)
  (6,3) -- (6,4)
  (6,3) -- (7,3)
  (6,3) -- (7,4)
  (6,4) -- (6,5)
  (6,4) -- (7,4)
  (6,4) -- (7,5)
;
\draw[orange, thick,line width=1.5pt]
  (1,2) -- (1,3)
  (1,2) -- (2,2)
  (1,3) -- (1,4)
  (1,4) -- (2,5)
  (2,1) -- (2,2)
  (2,1) -- (3,1)
  (2,5) -- (3,6)
  (3,1) -- (4,1)
  (3,6) -- (4,6)
  (4,1) -- (5,1)
  (4,6) -- (5,6)
  (5,1) -- (6,2)
  (5,6) -- (6,6)
  (6,2) -- (7,3)
  (6,5) -- (6,6)
  (6,5) -- (7,5)
  (7,3) -- (7,4)
  (7,4) -- (7,5);
\end{scope}
\begin{scope}[shift={(20,0.5)}]
  \draw[cyan!45,fill=cyan]
    (0,4.2) rectangle (0.5,4.7);
  \node[anchor=west] at (0.7,4.45) {$B_h$};
  \path[
    pattern=north east lines,
    pattern color=red!70
  ] (0,3.2) rectangle (0.5,3.7);
  \draw[red]
    (0,3.2) rectangle (0.5,3.7);
      \node[anchor=west] at (0.7,3.45) {$\Omega_h^\Gamma,\mathcal{T}_h^\Gamma$};
      \begin{scope}[shift={(0,2)}]
  \path[
    pattern=north west lines,
    pattern color=black!70
  ] (0,3.2) rectangle (0.5,3.7);
  \draw
    (0,3.2) rectangle (0.5,3.7);
  \node[anchor=west] at (0.7,3.45) {cut cell};
	\end{scope}
	
\draw[orange, ultra thick]
  (0,2.2) -- (0.5,2.2);
\node[anchor=west] at (0.7,2.2) {$\partial\Omega_h$};
\draw[blue, ultra thick]
  (0,1.2) -- (0.5,1.2);
\node[anchor=west] at (0.7,1.2) {$ \mathcal{F}_h^\Gamma$};
\end{scope}
\end{tikzpicture}
\caption{Illustration of the extended domain $ \Omega_h$, cut cells in $ \Omega$, boundary strip $B_h:=\Omega_h\backslash\{\phi_h<0\}$, full cut element region $ \Omega_h^ \Gamma$ and the edge set $ {\mathcal{F}} _h^ \Gamma$ involving jump penalties.}
\label{geo1}
\end{figure}

\FloatBarrier
The recently proposed $\phi$-FEM \cite{phiFEM_poisson,phifem_natural,phifem_heat,phifem_stokes} is an UnCut FEM which offers an appealing alternative that overcomes these integration challenges. In the $\phi$-FEM, the domain of the differential equation is extended to $ \Omega_h$, which consists of all full elements $T\in\mathcal{T}_h$
  intersecting the physical domain (including the cut cells $T\in\mathcal{T}_h^\Gamma$), and the Dirichlet boundary condition is implicitly imposed by constructing the approximate solution as the product of a finite element function and the level set function. Namely, one considers
\begin{align}\label{def:Sh}
S_h=\{\phi_hv_h:v_h\in H^1(\Omega_h)\ \text{and }v_h|_T\in P^k(T)\ \forall T\in\mathcal{T}_h\},
\end{align}
where $ \phi_h$ denotes an approximate level set function, and one introduces the operator $\tDelta: S_h\to S_h$ defined by
\begin{align}\label{def:tildedelta}
(\tDelta u_h,v_h)_{L^2(\Omega_h)}
=-(\nabla u_h,\nabla v_h)_{L^2(\Omega_h)}
+(\partial_n u_h,v_h)_{L^2(\partial \Omega_h)},\quad\forall\ v_h\in S_h,
\end{align}
and solves $-\tDelta u_h = P_h f$, where $P_h$ is the $L^2$ projection defined in \eqref{def:Ph}.
As a result, integrations over curved boundaries and cut cells are completely avoided, and all volume integrals are carried out on full background elements. This feature distinguishes $\phi$-FEM from other unfitted approaches, such as XFEM and CutFEM, and leads to substantial advantages in computational efficiency and implementation simplicity. To ensure stability in the presence of domain extension, $\phi$-FEM introduces stabilization terms on the elements $T\in \mathcal{T}_h^ \Gamma$ (which intersect the boundary), including volumetric penalties such as $\|\Delta u_h\|_T$ and edge-based jump stabilizations. For sufficiently large, though a priori unknown, stabilization parameters, this approach can be shown to achieve optimal-order convergence in the $H^1$ norm for both the Poisson and Stokes equations. However, in practical computations, the precise minimal values required for these stabilization parameters are generally not known. 

Motivated by this difficulty in practical computation, we propose a new unfitted finite element formulation, referred to as the \textit{least-squares UnCut~FEM}. The method is inspired by the ideas of the~$\phi$-FEM and employs the level set function~$\phi$ in its numerical construction. It avoids numerical integration over cut subregions of boundary elements as in the $\phi$-FEM, but introduces a novel least-squares formulation that minimizes an $H^{-1}$-residual of the governing equations, thereby enabling the use of $C^0$ finite elements and ensuring stability without requiring the stabilization parameters to be sufficiently large. As a result, the scheme remains robust with fixed unit weights, thereby eliminating the need for user-tuned stabilization parameters. For the Poisson equation, the proposed numerical scheme in~\eqref{2.7} is equivalent to the following minimization problem:
\begin{align}\label{mini:pro}\notag
    u_h=\arg\min_{v_h\in S_h}\Big(
    &\|(I-\Delta_h)^{-1}(\tDelta v_h+P_hf)\|_{H^1(\Omega_h)}^2\\
&+h^2\sum_{T\in\mathcal{T}_h^\Gamma}\|\Delta v_h+f\|_{L^2(T)}^2
        +j_h(v_h,v_h)\Big),
\end{align}
where $j_h(\cdot,\cdot)$ is a penalty term that ensures stability of the numerical scheme; see \eqref{penalty:edge} for its definition.
In particular, the first term in the functional above 
represents the squared discrete 
$H^{-1}(\Omega_h)$-norm of the residual
$\tDelta u_h+P_h f$, realized via the discrete Riesz map $(I-\Delta_h)^{-1}$,
with $\Delta_h:S_h\to S_h$ the discrete Laplacian operator with homogeneous Neumann boundary conditions defined in \eqref{def:deltah}.

Similarly, for the Stokes problem, the proposed numerical scheme is equivalent to the following minimization problem:
\begin{align}\label{functional:stokes}\notag
(u_h,p_h)=&\arg\min_{v_h\in X_h,q_h\in Q_h}
\Big(\|(I-\Delta_h)^{-1} (K(v_h,q_h)+P_hf)\|_{H^1(\Omega_h)}^2+\|\nabla\cdot v_h\|_{L^2(\Omega_h)}^2 \\
&+h^2\sum_{T\in\mathcal{T}_h^\Gamma}\|-2 \textrm{div}D v_h+\nabla q_h-f\|_{L^2(T)}^2+j_h^{(2)}(v_h,v_h)\Big),
\end{align}
where the finite element spaces $X_h = (S_h)^d$ and $Q_h$, as well as the penalty term $j_h^{(2)}$, are defined in \eqref{defj2}.
Similar to \eqref{def:tildedelta}, $K(u_h,p_h)\in X_h$ is defined by requiring for $v_h\in X_h$,
\begin{align}\notag\label{defK}
    (K(u_h,p_h),v_h)_{ L^2(\Omega_h)}&= -(2D u_h, D v_h)_{ L^2(\Omega_h)}
    + (p_h, \nabla \cdot v_h)_{L^2( \Omega_h)}\\
    &+ ((2D u_h - p_hI) n, v_h)_{L^2(\partial \Omega_h)}.
\end{align}
Introducing $w_h\in X_h$ as the Riesz representative of this residual, i.e.,
\begin{equation}\label{eq:residue}
(I-\Delta_h) w_h = K(u_h,p_h)+P_h f,
\end{equation}
the minimization problem can be reformulated in terms of $w_h$.
The resulting functional is coercive with respect to
$w_h$, $\nabla\!\cdot u_h$, and the additional stabilization terms.
As a consequence, it directly yields stability estimates for
$\|w_h\|_{H^1(\Omega_h)}$, $\|\nabla\!\cdot u_h\|_{L^2(\Omega_h)}$, and the
penalty contributions.

This is precisely the reason why the proposed method succeeds. 
Taking the Stokes problem as an example, the stabilization is designed 
to be independent of the velocity--pressure coupling mechanism. 
In particular, once the stability of $w_h$, $\nabla\!\cdot u_h$, 
and the additional penalty terms has been established, the stability of the discrete velocity $u_h$ and pressure $p_h$ can then be deduced from the residual relation~\eqref{eq:residue}. 
In other words, the intrinsic stability structure of the least-squares UnCut FEM allows the stabilization terms to be strong enough in the proof without adversely affecting the pressure stability in the Stokes problem. Indeed, although the numerical scheme is revisited after applying the inf--sup condition, the resulting estimate for $\|p_h\|_{L^2(\Omega_h)}$ relies on the previously established bounds for $w_h$ and penalty terms. 
As a consequence, optimal $H^1$-error estimates for the velocity and optimal
$L^2$-error estimates for the pressure (up to a constant) can be derived.
Moreover, the proposed approach yields a \emph{parameter-free} scheme that is robust with respect to the cut configuration, as demonstrated numerically (i.e., it maintains accuracy even when mesh elements are arbitrarily small or irregularly cut).

The remainder of this paper is organized as follows. In Section 2, we introduce the notation and assumptions, present the least-squares UnCut FEM schemes
for the Poisson and Stokes equations, and state our main results. In Section 3, we give detailed proofs of the results stated in Section 2. In Section 4, we present numerical experiments that validate our theoretical results. Finally, concluding remarks are presented in Section 5.

\section{Formulation of the Method and Main Results}
\subsection{Notation and Assumptions} 
In this paper, $\mathcal{O}\subset\mathbb{R}^d$ ($d=2,3$) denotes a bounded background domain with a quasi-uniform simplicial triangulation $\mathcal{T}_h^{\mathcal{O}}$ of mesh size $h$.
Let $(\phi_h)_{h>0}$ be a family of continuous, elementwise
polynomial approximations of the smooth level set function $\phi$,
with uniformly bounded polynomial degree (e.g., nodal interpolation onto the Lagrange finite element space on $\mathcal{T}_h^{\mathcal{O}}$). 
Based on $\phi_h$, we define the \emph{active triangulation}
as the collection of elements in the background mesh $\mathcal{T}_h^{\mathcal{O}}$ that intersect
the interior $\{\phi_h<0\}$:
\[
\mathcal{T}_h
=\bigl\{\,T\in\mathcal{T}_h^{\mathcal{O}}:\; T\cap\{\phi_h<0\}\neq\emptyset\,\bigr\}.
\]
The domain occupied by the active triangulation is then denoted as
$\Omega_h:=\bigl(\bigcup_{T\in\mathcal{T}_h}\overline{T}\bigr)^{o}.$
We next single out those elements that are cut by the approximate interface $\{\phi_h=0\}$ and the
corresponding internal facets adjacent to such cut elements (see Figure~\ref{geo1}):
\begin{align}
\mathcal{T}_h^\Gamma
&=\bigl\{\,T\in\mathcal{T}_h:\; T\cap\Gamma_h\neq\emptyset\,\bigr\},\quad \Gamma_h:=\{\phi_h=0\},
\\
\mathcal{F}_h^\Gamma
&=\bigl\{\,F\text{ is a common facet }((d-1)\text{face})\text{ such that }F=\overline{T_1}\cap\overline{T_2}:\; T_1\in\mathcal{T}_h,\; T_2\in\mathcal{T}_h^\Gamma\,\bigr\}.
\end{align}
The union of cut elements defines the corresponding interface zone
$\Omega_h^\Gamma:=\bigl(\bigcup_{T\in\mathcal{T}_h^\Gamma}\overline{T}\bigr)^{o}.$

Throughout this paper, we use $(\cdot,\cdot)_{L^2}$ and $(\cdot,\cdot)_{H^1}$ to denote the $L^2$ and $H^1$ inner products, respectively. We write $C$ for a generic positive constant independent of $h$, whose value may change from line to line. We also impose the following mild assumptions on $\phi$, $\phi_h$, and $\mathcal{T}_h^\Gamma$.
\begin{assumption}\label{a1}
We assume that $\phi$ is smooth with $|\nabla \phi(x)| > 0$ for all $x \in \Gamma$ and $\phi(x) \neq 0$ for all $x \notin \Gamma$. 
\end{assumption}

In practical computation, if $\phi_h$ is a good approximation to $\phi$ (such as the Lagrange interpolant of $\phi$), then Assumption~\ref{a1} implies the following properties (which we state as assumptions) when the mesh size $h$ is sufficiently small.

\begin{assumption}\label{a3}
For every cut element $T\in\mathcal{T}_h^\Gamma$, there exists a path connecting $T$ and an interior element $T'\in\mathcal{T}_h\backslash\mathcal{T}_h^\Gamma$ that goes through at most $N$ cut elements in $\mathcal{T}_h^\Gamma$, where $N\in\mathbb{N}_{>0}$ is independent of the mesh size $h$.
\end{assumption}
\begin{assumption}\label{a2}
We assume that $|\nabla \phi_h(x)|$ is uniformly bounded  and there exists an $h$-independent constant $m>0$ such that $|\nabla \phi_h(x)|>m$ whenever $\mathrm{dist}(x, \Gamma) < 3h$. 
Moreover, we assume that  $\mathrm{dist}(\Gamma, \Gamma_h) = O(h)$.
\end{assumption}

\begin{myremark}\upshape
Assumption~\ref{a1} is explicitly used in the proof of Lemma~\ref{interpolation}. 
Assumption~\ref{a3} is used in the proofs of Lemmas~\ref{poissonlemma} and~\ref{phifem_stokes}, while Assumption~\ref{a2} is required for Lemma~\ref{oldinfsup}, following the analysis developed in the $\phi$-FEM literature~\cite{phiFEM_poisson,phifem_natural,phifem_heat,phifem_stokes}. 
\end{myremark}

\subsection{Numerical Schemes and Main Results}
\subsubsection{Poisson Equation}
We first consider the Poisson equation \eqref{Poisson} with homogeneous Dirichlet boundary conditions.
To formulate the weak form of the minimizer of \eqref{mini:pro}, we introduce an auxiliary variable $w_h\in S_h$ defined by
\begin{align}
\label{def:wh}
w_h=(I-\Delta_h)^{-1}(\tDelta u_h+ P_h f),
\end{align} 
where $\tDelta:S_h\to S_h$ is defined in \eqref{def:tildedelta}, 
and $\Delta_h:S_h\to S_h$ is the discrete Laplacian operator with homogeneous Neumann boundary conditions on $\partial\Omega_h$, i.e.
\begin{align}\label{def:deltah}
(\Delta_h s_h,v_h)_{L^2(\Omega_h)}
:=-(\nabla s_h,\nabla v_h)_{L^2(\Omega_h)},\quad\forall v_h\in S_h.
\end{align}
Moreover, $P_h$ is the $L^2$ projection operator given by
\begin{align}\label{def:Ph}
(P_hf,v_h)_{L^2(\Omega_h)}
=(f,v_h)_{L^2(\Omega_h)},\quad\forall v_h\in S_h.
\end{align}
Consequently, \eqref{def:wh} can be written in the following weak form: find $w_h\in S_h$ such that
\begin{align}\label{scheme2}
(w_h,\eta_h)_{L^2(\Omega_h)}
+(\nabla w_h,\nabla\eta_h)_{L^2(\Omega_h)}
= (\tilde \Delta_h u_h+ P_h f,\eta_h)_{L^2(\Omega_h)}
\qquad \forall\,\eta_h\in S_h.
\end{align}
Note that $f$ is evaluated on $\Omega_h$ above. For simplicity, we assume that $f$ is defined on $\Omega_h\cup\Omega$, so that its restriction to $\Omega_h$ is well-defined.

By introducing the auxiliary variable, the variational formulation of (\ref{mini:pro}) reads: for all $v_h\in S_h$, it holds that
\begin{equation}\label{scheme3}
\big(w_h,(I-\Delta_h)^{-1} \tilde\Delta_h v_h\big)_{H^1(\Omega_h) }+h^2\sum_{T\in\mathcal{T}_h^\Gamma}(\Delta u_h+f, \Delta v_h)_{L^2(T)}+j_h(u_h,v_h)=0,
\end{equation}
where
\begin{align}\label{penalty:edge}
j_h(u_h,v_h)&=\sum_{E\in\mathcal{F}_h^\Gamma}h\int_{E}[\partial_nu_h]_E[\partial_n v_h]_E.
\end{align}
Note that for $z_h\in S_h$, we have
\begin{align}\label{prop:delta}
(w_h,z_h)_{H^1(\Omega_h)}=(w_h,z_h)_{L^2(\Omega_h)}+(\nabla w_h,\nabla z_h)_{L^2(\Omega_h)}=(w_h,(I-\Delta_h)z_h)_{L^2(\Omega_h)}.
\end{align}
This allows us to rewrite (\ref{scheme3}) as
\begin{equation}\label{scheme4}
(w_h,\tDelta v_h)_{L^2(\Omega_h)}
+h^2\sum_{T\in\mathcal{T}_h^\Gamma}(\Delta u_h+f,\Delta v_h)_{L^2(T)}
+j_h(u_h,v_h)=0.
\end{equation}
Combining (\ref{scheme2}) with (\ref{scheme4}), our scheme reads as follows: find $(w_h,u_h)\in S_h\times S_h$ such that, for all $(\eta_h,v_h)\in S_h\times S_h$,
it holds that
\begin{align}\label{2.7}\notag
&(w_h,\eta_h)_{H^1(\Omega_h)}
-(\partial_n u_h,\eta_h)_{L^2(\partial\Omega_h)}
+(\nabla u_h,\nabla\eta_h)_{L^2(\Omega_h)}
=(f,\eta_h)_{L^2(\Omega_h)},\\
\notag
&(w_h,\partial_n v_h)_{L^2(\partial\Omega_h)}
-(\nabla w_h,\nabla v_h)_{L^2(\Omega_h)}
+h^2\sum_{T\in\mathcal{T}_h^\Gamma}(\Delta u_h,\Delta v_h)_{L^2(T)}\\
&\hspace{6cm}+j_h(u_h,v_h)=-h^2\sum_{T\in\mathcal{T}_h^\Gamma}(f,\Delta v_h)_{L^2(T)} .
\end{align}
We now state our main result for solving the Poisson equation \eqref{Poisson}:
\begin{theorem}\label{thm2.1}
Let $f\in H^{k-1}(\Omega_h\cup\Omega) \cap H^1(\Omega_h\cup\Omega)$,
and let $\tilde{u}\in H^{k+1}(\Omega_h\cup \Omega_)\cap H^{3}(\Omega_h\cup \Omega)$ be an extension of the solution $u$.
Then, under Assumptions~\ref{a1}--\ref{a3}, there exists a unique pair $(w_h,u_h)\in S_h\times S_h$ satisfying (\ref{2.7}), and it holds that 
\begin{equation}\label{thm:poisson_u}
\|\tilde{u}-u_h\|_{H^1(\Omega_h)}\le C\big(h^k+\|\phi-\phi_h\|_{H^1(\Omega_h)}\big)
(\|f\|_{H^{k-1}(\Omega_h\cup\Omega)} + \|f\|_{H^1(\Omega_h\cup\Omega)}). 
\end{equation}
\end{theorem}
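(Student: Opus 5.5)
The plan is to treat \eqref{2.7} as the Euler--Lagrange system of the convex functional \eqref{mini:pro} and to prove a discrete stability estimate in a mesh-dependent norm. Consistency with the extended exact solution and an approximation argument in $S_h$ then give the error bound.

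\emph{Step 1: coercivity of the functional.} Set $f=0$, take $\eta_h=w_h$ and $v_h=u_h$ in \eqref{2.7}, and add the two equations. The cross terms $-(\partial_n u_h,w_h)_{L^2(\partial\Omega_h)}+(\nabla u_h,\nabla w_h)$ and $(w_h,\partial_n u_h)_{L^2(\partial\Omega_h)}-(\nabla w_h,\nabla u_h)$ cancel. This leaves
\[
\|w_h\|_{H^1(\Omega_h)}^2+h^2\sum_{T\in\mathcal{T}_h^\Gamma}\|\Delta u_h\|_{L^2(T)}^2+j_h(u_h,u_h)=0 .
\]
\emph{Step 2: from these quantities back to $u_h$.} I would establish the key estimate
\[
\|u_h\|_{H^1(\Omega_h)}^2\le C\Big(\|w_h\|_{H^1(\Omega_h)}^2+h^2\sum_{T\in\mathcal{T}_h^\Gamma}\|\Delta u_h\|_{L^2(T)}^2+j_h(u_h,u_h)\Big)\qquad\forall u_h\in S_h,\ w_h=(I-\Delta_h)^{-1}\tDelta u_h .
\]
By \eqref{scheme2} we have $-(\tDelta u_h,v_h)=(\nabla u_h,\nabla v_h)-(\partial_nu_h,v_h)_{\partial\Omega_h}=-(w_h,v_h)_{H^1}$. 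Testing with $v_h=u_h$ gives
\[
\|\nabla u_h\|^2-(\partial_n u_h,u_h)_{\partial\Omega_h}\le\|w_h\|_{H^1}\|u_h\|_{H^1}.
\]
The boundary term is the obstruction. I would control it with the $\phi$-FEM-type estimate from the paper's Lemma~\ref{poissonlemma} (which uses Assumption~\ref{a3}). That estimate says that for $u_h\in S_h$, the quantity $\|\nabla u_h\|_{\Omega_h^\Gamma}^2$ is bounded by a small multiple of $\|\nabla u_h\|_{\Omega_h\setminus\Omega_h^\Gamma}^2$ plus $C(h^2\sum\|\Delta u_h\|_T^2+j_h)$. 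Combined with a trace inverse inequality on boundary elements, this gives $|(\partial_nu_h,u_h)_{\partial\Omega_h}|\le(1-\alpha)\|\nabla u_h\|^2+C(\cdot)$. A Poincaré inequality for $S_h$ (functions vanishing on $\Gamma_h$; Lemma~\ref{oldinfsup}/Assumption~\ref{a2}) then closes the estimate. Steps 1 and 2 give uniqueness, hence existence for this square linear system.

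\emph{Step 3: error equation.} Let $e_h=u_h-\phi_hI_h(\tilde u/\phi)$, where the interpolant is taken from Lemma~\ref{interpolation}. Its approximation error in $H^1$, $h^2\|\Delta\cdot\|$ and $j_h$ is $O(h^k+\|\phi-\phi_h\|_{H^1})$ times data norms. Insert $\tilde u$ into \eqref{2.7} with $\tilde w=0$. Since $-\Delta\tilde u=f$ only on $\Omega$, the residual is supported on the strip $\Omega_h\setminus\Omega$ and in $\tilde u$ not vanishing on $\Gamma_h$. Integrating by parts, the first equation has consistency error
\[
(f+\Delta\tilde u,\eta_h)_{L^2(\Omega_h\setminus\Omega)},
\]
and the second has the term $h^2\sum(f+\Delta\tilde u,\Delta v_h)_T$. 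The jumps $[\partial_n\tilde u]$ vanish since $\tilde u\in H^3$. These terms are bounded using the thin-strip width $O(h)$ together with the $H^1$ regularity of $f$ and the $H^3$ regularity of $\tilde u$. Applying the Step~2 stability to $(w_h-\tilde w_h, e_h)$ and the triangle inequality then yields \eqref{thm:poisson_u}.

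\emph{Main obstacle.} I expect Step 2 to be the hardest: controlling $(\partial_n u_h,u_h)_{\partial\Omega_h}$ using only $w_h$, the volume penalty and the edge jumps, without large parameters. The key is that $\phi_h$ vanishes on $\Gamma_h$, which lets one propagate from interior elements across at most $N$ cut elements (Assumption~\ref{a3}). I would try first to mirror the $\phi$-FEM coercivity proof with unit weights, and use the $\|w_h\|_{H^1}$ control to absorb what the parameters previously handled.
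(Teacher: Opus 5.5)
Your architecture matches the paper's. Step~1 is the test $(\eta_h,v_h)=(w_h,u_h)$ and Step~2 is the test $\eta_h=u_h$ in the first equation; the paper merges these into the single test $(w_h+\alpha u_h,u_h)$ to get Lemma~\ref{lem:inf-sup}. However, the way you propose to close Step~2 does not work, and the missing idea is the central point of the paper.

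\textbf{Step 2.} Lemma~\ref{poissonlemma} does not say that $\|\nabla u_h\|^2_{L^2(\Omega_h^\Gamma)}$ is a \emph{small} multiple of the interior energy. The $\phi$-FEM argument behind it only gives a factor $\gamma<1$ that is not small. It treats $(\partial_n u_h,u_h)_{L^2(\partial\Omega_h)}$ by the divergence theorem on the strip $B_h$, producing $\|\nabla u_h\|^2_{L^2(B_h)}$ with coefficient exactly one; compare the paper's continuity proof and \eqref{eq:344}--\eqref{3.22}. A generic trace--inverse inequality instead brings an uncontrolled constant $C_{\mathrm{tr}}$, and $C_{\mathrm{tr}}\gamma<1$ is not guaranteed, so you do not get the factor $1-\alpha$. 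Re-deriving the coercivity with unit weights is also the wrong direction, since it need not hold for small $\sigma$. Instead, use \eqref{c031} as stated, with its large $\sigma$. Testing the first equation with $\eta_h=u_h$ gives $\|\nabla u_h\|^2_{L^2(\Omega_h)}-(\partial_nu_h,u_h)_{L^2(\partial\Omega_h)}=-(w_h,u_h)_{H^1(\Omega_h)}$, hence
\[
C_{\mathrm{coer}}\|\nabla u_h\|^2_{L^2(\Omega_h)}\le \|w_h\|_{H^1(\Omega_h)}\|u_h\|_{H^1(\Omega_h)}+\sigma\Big(h^2\sum_{T\in\mathcal{T}_h^\Gamma}\|\Delta u_h\|^2_{L^2(T)}+j_h(u_h,u_h)\Big).
\]
The bracket is controlled by Step~1 whatever $\sigma$ is, so $\sigma$ only enters the stability constant; this is exactly the paper's requirement $\alpha<1/\sigma$. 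Lemma~\ref{poincare2} (not Lemma~\ref{oldinfsup}) then closes the estimate.

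\textbf{Step 3.} There are three further gaps here.
\begin{enumerate}
\item With only $f\in H^1$ and $\tilde u\in H^3$, Lemma~\ref{vanish} with $s=1$ plus $\|\eta_h\|_{L^2(\Omega_h\setminus\Omega)}\le Ch|\eta_h|_{H^1(\Omega_h^\Gamma)}$ yields only $O(h^2)$, which is suboptimal for $k\ge 3$. You need that $f+\Delta\tilde u\in H^{k-1}$ vanishes in $\Omega$, and then Lemma~\ref{vanish} with $s=k-1$. Via the Stein extension and elliptic regularity this gives $\|f+\Delta\tilde u\|_{L^2(\Omega_h\setminus\Omega)}\le Ch^{k-1}\|f\|_{H^{k-1}(\Omega_h\cup\Omega)}$. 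The same applies to $h^2\sum_T(f+\Delta\tilde u,\Delta v_h)_{L^2(T)}$, using an inverse inequality on $\Delta v_h$.
\item Replacing $\tilde u$ by $\tilde I_h\tilde u$ creates the term $(\partial_n(\tilde u-\tilde I_h\tilde u),\eta_h)_{L^2(\partial\Omega_h)}$. The $H^1$ interpolation error alone does not control it; you need the trace estimate (3.9a) together with $\|\eta_h\|_{L^2(\partial\Omega_h)}\le Ch^{1/2}|\eta_h|_{H^1(\Omega_h)}$.
\item Your Step~2 assumes the first equation holds exactly with zero data. In the error equation it carries the residual $r_c(\cdot,0)$. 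Redoing Step~2 with this residual is routine, and the paper's combined test $(w_h+\alpha e_h,e_h)$ absorbs it automatically through the inf--sup bound.
\end{enumerate}
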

\subsubsection{Stokes Equations}
We now consider the Stokes problem \eqref{Stokes} subject to homogeneous Dirichlet boundary conditions.
The Taylor--Hood finite element spaces for the velocity and pressure are defined as follows:
\begin{align}\label{defX_H}
X_h&= (S_h)^d,\quad S_h=\{\phi_hv_h:v_h\in H^1(\Omega_h)\ \text{and }v_h|_T\in P^k(T)\ \forall\ T\in\mathcal{T}_h\},\\
\label{defQ_H}
Q_h&=\{q_h\in H^1(\Omega_h):q_h|_{T}\in P^{k-1}(T)\ \forall\ T\in\mathcal{T}_h,\text{ and }\int_{\Omega_h}q_h dx=0\}.
\end{align}
Similar to the least-squares formulation \eqref{2.7} for the Poisson problem, 
we introduce an auxiliary variable 
$w_h\in X_h$ defined by
\begin{equation}
w_h=(I-\Delta_h)^{-1}(K(u_h,p_h)+P_h f),
\end{equation}
where, by a slight abuse of notation, we extend the discrete Laplacian $\Delta_h$ to vector fields by applying it to each component.
Then, in weak form, for all $\eta_h\in X_h$, we have
\begin{align}\label{scheme2.2}\notag
(w_h,\eta_h)_{H^1(\Omega_h)}
&=-2(Du_h,D\eta_h)_{L^2(\Omega_h)}+(p_h,\nabla\cdot\eta_h)_{L^2(\Omega_h)}\\
&+((2Du_h-p_hI)n,\eta_h)_{L^2(\partial\Omega_h)}
+(f,\eta_h)_{L^2(\Omega_h)}.
\end{align}
Using \eqref{prop:delta}, the variational formulation of \eqref{functional:stokes} reads as follows: 
\begin{align}\label{scheme2.4}\notag
&\!-2(Dw_h,Dv_h)_{L^2(\Omega_h)}
\!+\!(\nabla\!\cdot\! w_h,q_h)_{L^2(\Omega_h)}
\!+\!(w_h,(2Dv_h \!-\!q_hI)n)_{L^2(\partial\Omega_h)}
\!+\!(\nabla\!\cdot\! u_h,\nabla \!\cdot\! v_h)_{L^2(\Omega_h)}\\
&\!+\!h^2\sum_{T\in\mathcal{T}_h^\Gamma}(-2\textrm{div}D u_h
\!+\!\nabla p_h-f,-2 \textrm{div}D v_h+\nabla q_h)_{L^2(T)}
+j_h^{(2)}(u_h,v_h)=0,
\end{align}
where
\begin{align}\label{defj2}
j_h^{(2)}(u_h,v_h)&=\sum_{E\in\mathcal{F}_h^\Gamma}h\int_{E}[\partial_nu_h]_E[\partial_n v_h]_E+h^3\int_E[\partial_n^2u_h]_E[\partial_n^2 v_h]_E.
\end{align}
Hence, the scheme is obtained by combining \eqref{scheme2.2} and \eqref{scheme2.4}: 
find $(w_h,u_h,p_h)\in X_h\times X_h\times Q_h$ such that, 
for all $(\eta_h,v_h,q_h)\in X_h\times X_h\times Q_h$, it holds that
\begin{align}\label{3.7}\notag
&(w_h,\eta_h)_{H^1(\Omega_h)}
+(\nabla \cdot u_h,\nabla \cdot v_h)_{L^2(\Omega_h)}
\\\notag
&-((2Du_h-p_hI)n,\eta_h)_{L^2(\partial\Omega_h)}
+2(Du_h,D\eta_h)_{L^2(\Omega_h)}-(p_h,\nabla\cdot\eta_h)_{L^2(\Omega_h)}
\\\notag
&+(w_h,(2Dv_h-q_hI)n)_{L^2(\partial\Omega_h)}
-2(Dw_h,Dv_h)_{L^2(\Omega_h)}+(\nabla\cdot w_h,q_h)_{L^2(\Omega_h)}\\\notag
&+h^2\sum_{T\in\mathcal{T}_h^\Gamma}(-2 \textrm{div}D u_h+\nabla p_h,
-2 \textrm{div}D v_h+\nabla q_h)_{L^2(T)}
+j_h^{(2)}(u_h,v_h)\\
&{}=\ (f,\eta_h)_{L^2(\Omega_h)}
+h^2\sum_{T\in\mathcal{T}_h^\Gamma}(f,-2 \textrm{div}D v_h+\nabla q_h)_{L^2(T)}.
\end{align}
We now state the main results for the scheme \eqref{3.7}.
\begin{theorem}\label{thm:stokes}
Let $k\geq 2$. Let $f\in H^{k-1}(\Omega_h\cup\Omega)\cap H^1(\Omega_h\cup\Omega)$, 
and let $\tilde{u}\in H^{k+1}(\Omega_h \cup\Omega)\cap 
H^{3}(\Omega_h \cup\Omega)$ and $\tilde{p}\in H^{k}(\Omega_h\cup\Omega)$ be extensions of the solutions
$u$ and $p$, respectively.
Then, under Assumptions \ref{a1}--\ref{a2}, there exists a unique triple $(w_h,u_h,p_h)\in X_h\times X_h\times Q_h$ satisfying (\ref{3.7}), and it holds that
\begin{align}\label{thm:u}
\|u_h-\tilde{u}\|_{H^1(\Omega_h)}&\le C\big(h^k+\|\phi-\phi_h\|_{H^1(\Omega_h)}\big)(\|f\|_{H^{k-1}(\Omega_h\cup\Omega)}+\|f\|_{H^1(\Omega_h\cup\Omega)}),\\
\label{thm:p}
\|p_h - \Pi \tilde{p}\|_{L^2(\Omega_h)}&\le C\big(h^k+\|\phi-\phi_h\|_{H^1(\Omega_h)}\big)(\|f\|_{H^{k-1}(\Omega_h\cup\Omega)}+\|f\|_{H^1(\Omega_h\cup\Omega)}),
\end{align}
where $\Pi$ is defined by

\vspace{-1.5em}
\begin{equation}
\label{def:Pi}
\Pi g := g-\bar g, 
\qquad 
\bar g := \frac{1}{|\Omega_h|}\int_{\Omega_h} g\,dx .
\end{equation}
\end{theorem}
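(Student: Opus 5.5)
The argument will follow the same pattern as Theorem~\ref{thm2.1}: a Strang-type estimate in which stability comes from a least-squares coercivity identity and consistency comes from inserting the extension $(\tilde u,\tilde p)$.

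\textbf{Step 1: coercivity identity.} Denote by $A((w,u,p),(\eta,v,q))$ the bilinear form on the left of \eqref{3.7}. Testing with $(\eta_h,v_h,q_h)=(w_h,u_h,p_h)$ makes all boundary and coupling terms cancel pairwise. This gives
\begin{align*}
A((w_h,u_h,p_h),(w_h,u_h,p_h))
&=\|w_h\|_{H^1(\Omega_h)}^2+\|\nabla\cdot u_h\|_{L^2(\Omega_h)}^2\\
&\quad+h^2\sum_{T\in\mathcal{T}_h^\Gamma}\|-2\operatorname{div}Du_h+\nabla p_h\|_{L^2(T)}^2+j_h^{(2)}(u_h,u_h)
=:\vertiii{(w_h,u_h,p_h)}_*^2 .
\end{align*}

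\textbf{Step 2: control of $u_h$ and $p_h$.} I will bound $\|u_h\|_{H^1}+\|p_h\|_{L^2}$ by $C\vertiii{\cdot}_*$, following the inf--sup ideas of the $\phi$-FEM. The residual relation \eqref{scheme2.2} gives, for every $\eta_h\in X_h$,
\[
2(Du_h,D\eta_h)-(p_h,\nabla\cdot\eta_h)-((2Du_h-p_hI)n,\eta_h)_{\partial\Omega_h}=-(w_h,\eta_h)_{H^1}.
\]
\begin{itemize}
\item \emph{Velocity.} Choose $\eta_h=u_h$. Integrate the boundary term by parts elementwise on $\Omega_h^\Gamma$. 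This produces volume residuals $h\|-2\operatorname{div}Du_h+\nabla p_h\|_T$ and facet jumps, which are controlled by the $h^2$ term and by $j_h^{(2)}$ through Assumption~\ref{a3}, exactly as in Lemmas~\ref{poissonlemma} and~\ref{phifem_stokes}. The pressure term $(p_h,\nabla\cdot u_h)$ is bounded by the divergence term. A Korn inequality on $S_h$-type functions (vanishing near $\Gamma_h$ via $\phi_h$) then yields the velocity bound.
\item \emph{Pressure.} Use the inf--sup stability of Lemma~\ref{oldinfsup}, which relies on Assumption~\ref{a2}: pick $\eta_h\in X_h$ with $(p_h,\nabla\cdot\eta_h)\ge c\|p_h\|^2$ up to boundary-zone terms. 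Insert this $\eta_h$ into the residual relation and use the already obtained bounds on $w_h$ and the penalties.
\end{itemize}
Since this gives a norm on the finite-dimensional space, uniqueness and hence existence follow. It also yields the discrete inf--sup property
\[
\sup_{(\eta,v,q)}\frac{A((w_h,u_h,p_h),(\eta,v,q))}{\vertiii{(\eta,v,q)}}\ge c\,\vertiii{(w_h,u_h,p_h)}.
\]
More practically, the stability bound applies to arbitrary right-hand sides.

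\textbf{Step 3: consistency and approximation.} Take the interpolants $u_I=\phi_h I_h(\tilde u/\phi)\in X_h$ and $p_I=I_h\Pi\tilde p$, together with $w_I=0$; Lemma~\ref{interpolation} gives errors $O(h^k+\|\phi-\phi_h\|_{H^1})$. Apply Steps~1--2 to the discrete error $(w_h,u_h-u_I,p_h-p_I)$, whose right-hand side is $A$ evaluated at $(0,\tilde u-u_I,\tilde p-p_I)$ plus the consistency defect. The exact solution satisfies $-2\operatorname{div}D\tilde u+\nabla\tilde p=f$ only in $\Omega$, so on $B_h$ a defect appears. This defect must be shown to be of order $h^k$; it is controlled in $H^{-1}$ since the strip has width $O(h)$ and $f,\tilde u,\tilde p$ have $H^1$/$H^3$ regularity, giving a Hardy/strip-type estimate. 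The jumps of $\tilde u$ vanish for smooth $\tilde u\in H^3$. Bounding each term by $C(h^k+\|\phi-\phi_h\|_{H^1})$ times the data norms then gives \eqref{thm:u} and \eqref{thm:p}, with $\Pi$ fixing the mean of the pressure.

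The \textbf{main obstacle} is Step~2, the pressure control: the inf--sup test function must be combined with the boundary term $(p_hn,\eta_h)_{\partial\Omega_h}$ without large parameters. I would handle it with the $h^2$-residual and the $h^3$ second-jump penalty in $j_h^{(2)}$, which is why that term appears and why $k\ge2$ is needed.
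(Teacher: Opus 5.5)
Your proposal is correct in substance and is essentially the paper's argument. The paper's single test function $(w_h+\alpha u_h+3\hat C\alpha^2 v_h^p,\,u_h,\,p_h)$ is just a weighted sum of your three tests $(w_h,u_h,p_h)$, $(u_h,0,0)$ and $(v_h^p,0,0)$. Two details differ from what you wrote. First, the boundary term is rewritten over the strip $B_h$, not elementwise over $\Omega_h^\Gamma$: only $u_h|_{\Gamma_h}=0$ removes the inner boundary contribution, and this is what yields exactly $2\|Du_h\|_{L^2(B_h)}^2$, so that $\gamma<1$ in Lemma~\ref{phifem_stokes} suffices. Second, the pressure uses Lemma~\ref{newinfsup} (zero mean on $\Omega_h$) rather than Lemma~\ref{oldinfsup}.

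There are three points to tighten.

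\emph{Coupling.} Your velocity and pressure bounds are coupled, so they cannot be done sequentially. The term $(p_h,\nabla\cdot u_h)$ needs $\|p_h\|$, not just the divergence term. The inf--sup step in turn needs $\|Du_h\|$, both through $2(Du_h,Dv_h^p)$ and through $h^2|p_h|_{H^1(\Omega_h^\Gamma)}^2\le C(\gamma\|Du_h\|_{L^2(\Omega_h)}^2+\dots)$. You must close the two bounds jointly by Young's inequality, which is what the weights $\alpha$ and $\alpha^2$ accomplish in the paper.

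\emph{Pressure interpolant.} It should be $\Pi I_h\tilde p$, since $I_h\Pi\tilde p$ need not have zero mean on $\Omega_h$.

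\emph{Extension defect.} It must be estimated via Lemma~\ref{vanish} with $s=k-1$, using the assumed $H^{k-1}$ regularity, not merely $H^1$ regularity; otherwise you only reach $h^2$, which falls short of $h^k$ when $k\ge3$.
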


\vspace{-0.5em}
\begin{myremark}\upshape\label{rem:projp}
The error bounds in Theorem \ref{thm2.1}--\ref{thm:stokes} include the term $\|\phi-\phi_h\|_{H^1(\Omega_h)}$ on the right hand side. 
In particular, if $\phi_h$ is given by the degree-$k$ (Lagrange) finite element interpolant of $\phi$, then the error estimates in \eqref{thm:poisson_u} and 
\eqref{thm:u}--\eqref{thm:p} reduce to $O(h^k)$.
\end{myremark}


\section{Proof of Theorems~\ref{thm2.1}-\ref{thm:stokes}}    
In this section, we present the proofs of the main theorems of the paper. 
We begin by recalling several preliminary tools that are frequently used in the $\phi$-FEM framework, including 
Poincar\'e inequalities, trace inequalities, and Hardy's inequality.
For ease of reference, these results are presented using their original numbering and are stated without proof.

\begin{lemma}[{\!\!\cite[Lemma~3.6]{phiFEM_poisson}}]\label{vanish}
Suppose that Assumption~2.1 holds.
Then, for any $v\in H^s(\Omega_h)$ 
satisfying $v=0$ in $\Omega$, it holds that
\[
\|v\|_{L^2(\Omega_h\setminus\Omega)}\le C h^s \|v\|_{H^s(\Omega_h\setminus\Omega)}.
\]
\end{lemma}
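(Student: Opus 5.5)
The estimate is a Poincaré-type inequality in the thin strip $\Omega_h\setminus\Omega$, whose width is $O(h)$ by Assumption~\ref{a2} (dist$(\Gamma,\Gamma_h)=O(h)$, and every element of $\mathcal{T}_h$ meets $\{\phi_h<0\}$, so $\Omega_h\setminus\Omega$ lies in a tubular neighbourhood $\{x:\ 0\le\phi(x),\ \mathrm{dist}(x,\Gamma)\le ch\}$). I would first straighten the boundary locally. By Assumption~\ref{a1}, $\Gamma$ is a smooth compact hypersurface with $|\nabla\phi|>0$ on it, so there is a fixed $\delta>0$ such that the map $(y,t)\mapsto y+t\,n(y)$, with $y\in\Gamma$ and $|t|<\delta$, is a smooth diffeomorphism onto a tubular neighbourhood, with Jacobian bounded above and below. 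For $h$ small, $ch<\delta$, so the strip is contained in $\{(y,t): y\in\Gamma,\ 0\le t\le ch\}$. Working in these coordinates costs only $h$-independent constants.

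For $s=1$, I would use the fundamental theorem of calculus along normal lines. Since $v=0$ in $\Omega$ and $v\in H^1$, the trace of $v$ on $\Gamma$ from outside vanishes. Hence, for a.e.\ $y$ and $0<t\le ch$,
\[
v(y,t)=\int_0^t\partial_\tau v(y,\tau)\,d\tau .
\]
Cauchy--Schwarz gives $|v(y,t)|^2\le t\int_0^{ch}|\partial_\tau v|^2\,d\tau$. Integrating over $t\in(0,ch)$ and $y\in\Gamma$ yields $\|v\|_{L^2}^2\le Ch^2\|\nabla v\|_{L^2}^2$ over the strip. One small point needs care: the normal segment from $y$ may leave $\Omega_h$ before reaching the far side of the strip. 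So I would integrate only over $0\le\tau\le t$, where $t$ is the point of interest. That segment lies in $\Omega_h\setminus\Omega$ provided the strip is "normally star-shaped" from $\Gamma$. If it is not, I would extend $v$ by a bounded $H^s$ extension (Stein) from $\Omega_h\setminus\Omega$ and restrict, or argue elementwise. The extension route is cleaner, but it introduces a norm over a slightly larger set. I expect this geometric bookkeeping to be the main obstacle, together with making sure all constants depend only on $\phi$ and the mesh regularity.

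For general integer $s$, I would iterate. Since $v$ vanishes in $\Omega$ and $v\in H^s$, all derivatives up to order $s-1$ have zero trace on $\Gamma$ from the exterior. Applying the $s=1$ estimate to $D^\alpha v$ with $|\alpha|\le s-1$ gives $\|D^\alpha v\|\le Ch\|D^{\alpha}\nabla v\|$ on the strip. Chaining $s$ times yields $\|v\|_{L^2(\Omega_h\setminus\Omega)}\le Ch^s\|v\|_{H^s(\Omega_h\setminus\Omega)}$. Equivalently, one can apply Taylor's formula with integral remainder in $t$ directly: $v(y,t)=\frac{1}{(s-1)!}\int_0^t(t-\tau)^{s-1}\partial_\tau^s v\,d\tau$, followed by Cauchy--Schwarz. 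Either way, the normal derivatives $\partial_\tau^s v$ are bounded by the full $H^s$ seminorm through the smooth change of variables. A density argument with smooth functions vanishing in $\Omega$ justifies the pointwise formulas.
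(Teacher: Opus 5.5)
The paper gives no proof of this lemma; it is quoted from \cite[Lemma~3.6]{phiFEM_poisson}. Your route (straighten $\Gamma$, use that the strip has width $O(h)$, apply a one-dimensional Taylor estimate with vanishing normal derivatives on $\Gamma$) is the standard one for estimates of this type, and its analytic core is sound. Along straight normal lines $\partial_\tau^s v=D^sv[n,\dots,n]$, so no lower-order terms arise. Hence $\int_0^{ch}|v|^2\,dt\le Ch^{2s}\int_0^{ch}|\partial_\tau^s v|^2\,d\tau$ holds on every normal line along which $v$ is defined and its normal derivatives of order $<s$ vanish at $\Gamma$.

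The gap is exactly the step you defer, and your proposed repair does not close it. Normal segments from $\Gamma$ to points of $\Omega_h\setminus\Omega$ do leave $\Omega_h$ in general. Tilted shape-regular elements can produce overhangs of $\partial\Omega_h$. Also, because $\mathcal{T}_h$ is selected through $\phi_h$ rather than $\phi$, parts of $\Gamma$ can lie outside $\overline{\Omega_h}$ altogether. A Stein extension from $\Omega_h\setminus\Omega$ fails on two counts.
\begin{itemize}
\item The set $\Omega_h\setminus\Omega$ has width $O(h)$ and is bounded by $\Gamma$ and by mesh facets that can cross $\Gamma$ at angles as small as $O(h)$ (e.g.\ a boundary facet that is a near-tangent chord of a convex $\Gamma$). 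It therefore has no $h$-uniform Lipschitz character, and Stein's theorem gives no $h$-uniform constant.
\item More fundamentally, an extension of $v|_{\Omega_h\setminus\Omega}$ to the tube $\{0<t<ch\}$ carries no information on the parts of $\Gamma$ not adjacent to $\Omega_h$. There its trace need not vanish, so the starting value $v(y,0)=0$ in $v(y,t)=\int_0^t\partial_\tau v\,d\tau$ is lost precisely on the lines for which you wanted the extension.
\end{itemize}
Extending the zero extension of $v$ from $\Omega_h\cup\Omega$ would keep the vanishing on $\Gamma$. You would then owe an $h$-uniform extension bound on this $h$-dependent domain, which is not available off the shelf.

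What does work is to make ``argue elementwise'' precise and abandon the normal lines. For $T\in\mathcal{T}_h$ with $T\not\subset\overline{\Omega}$, let $\omega_T$ be a chain of a bounded number of face-neighbouring elements of $\mathcal{T}_h$ linking $T$ to an element $T_*\subset\Omega$. Guaranteeing such chains requires a connectivity property of the type of Assumption~\ref{a3} together with $\mathrm{dist}(\Gamma,\Gamma_h)=O(h)$, not Assumption~\ref{a1} alone. The argument then runs as follows.
\begin{itemize}
\item Let $B_T\subset T_*$ be a ball of radius $\ge ch$.
\item The scaled Bramble--Hilbert lemma on $\omega_T$ gives $q\in P^{s-1}$ with $\|v-q\|_{L^2(\omega_T)}\le Ch^s|v|_{H^s(\omega_T)}$.
\item Since $v=0$ on $B_T$, equivalence of norms on $P^{s-1}$ yields $\|q\|_{L^2(\omega_T)}\le C\|q\|_{L^2(B_T)}=C\|v-q\|_{L^2(B_T)}$.
\item Hence $\|v\|_{L^2(T)}\le Ch^s|v|_{H^s(\omega_T)}=Ch^s|v|_{H^s(\omega_T\setminus\Omega)}$.
\end{itemize}
Summing over $T$ with bounded overlap of the patches proves the lemma, even with the seminorm on the right.

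Alternatively, your normal-line argument is complete as it stands if $v$ is given on a fixed neighbourhood of $\Gamma$ and you accept the $H^s$-norm over the whole tube on the right-hand side. That weaker version suffices for the paper's applications, provided $f$ is assumed to be defined on such a neighbourhood.
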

\begin{lemma}[{\!\!\cite[Lemma 3.1]{phiFEM_poisson}}]\label{hardy}
For any $v\in H^{s+1}(\Omega_h)$ vanishing on $\partial\Omega$, we have
\[\|v/\phi\|_{H^s(\Omega_h)}\le C\|v\|_{H^{s+1}(\Omega_h)}.\]
\end{lemma}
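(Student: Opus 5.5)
The plan is to prove the weighted Hardy estimate first in a tubular neighbourhood of $\Gamma$ using local coordinates adapted to $\phi$, and then to patch it with the trivial bound away from $\Gamma$.

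\textbf{Cut-off and splitting.} By Assumption~\ref{a1}, $|\nabla\phi|\ge c>0$ in a fixed tubular neighbourhood $U_\delta=\{x:\mathrm{dist}(x,\Gamma)<\delta\}$ of $\Gamma$. Moreover, $|\phi|\ge c_\delta>0$ on $\Omega_h\setminus U_{\delta/2}$, since $\phi\neq0$ off $\Gamma$ and the relevant region is compact. Take a smooth cut-off $\chi$ with $\chi=1$ on $U_{\delta/2}$ and $\operatorname{supp}\chi\subset U_\delta$. Write $v=\chi v+(1-\chi)v$.

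\textbf{The far part.} On the far part, $1/\phi$ is smooth with all derivatives bounded, so $\|(1-\chi)v/\phi\|_{H^s}\le C\|v\|_{H^s}\le C\|v\|_{H^{s+1}}$ by the Leibniz rule. It therefore suffices to treat $w=\chi v$, which is supported in $U_\delta$, vanishes on $\Gamma$, and satisfies $\|w\|_{H^{s+1}}\le C\|v\|_{H^{s+1}}$.

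\textbf{Straightening and the division formula.} Use the map $x\mapsto(y,t)$ with $y\in\Gamma$ and $t=\phi(x)$. By the implicit function theorem and $|\nabla\phi|>0$, this is a smooth diffeomorphism of $U_\delta$ onto a neighbourhood of $\Gamma\times\{0\}$, and Sobolev norms are equivalent under it. In these coordinates $\phi=t$ and $w(y,0)=0$, so
\[
\frac{w(y,t)}{t}=\int_0^1\partial_t w(y,\theta t)\,d\theta .
\]
Differentiating under the integral gives $\partial^\alpha_{y}\partial^j_t(w/t)=\int_0^1\theta^j(\partial^\alpha_y\partial_t^{j+1}w)(y,\theta t)\,d\theta$. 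Minkowski's integral inequality then yields
\[
\|\partial^\alpha_y\partial^j_t(w/t)\|_{L^2}\le\int_0^1\theta^{j}\theta^{-1/2}\,\|\partial^\alpha_y\partial_t^{j+1}w\|_{L^2}\,d\theta ,
\]
because the change of variables $t\mapsto\theta t$ contributes the factor $\theta^{-1/2}$. Since $j-1/2>-1$, the $\theta$-integral is finite. Summing over $|\alpha|+j\le s$ gives $\|w/t\|_{H^s}\le C\|w\|_{H^{s+1}}$ for integer $s$; this is the classical Hardy inequality in its $L^2$, $j=0$ form. Non-integer $s$, if needed, would follow by interpolation, since $v\mapsto v/\phi$ is linear on the subspace of functions vanishing on $\Gamma$.

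\textbf{Domain issues and the main obstacle.} One subtlety is that $\Omega_h$ is not a coordinate-friendly domain. I would handle this by first extending $v$ to $H^{s+1}$ of a fixed larger smooth domain containing all $\Omega_h$ with uniformly bounded norm, using a Stein extension from the Lipschitz domain $\Omega_h$, and then restricting at the end. The main obstacle is whether this extension constant is uniform in $h$. Since $\Omega_h$ is a union of shape-regular elements, its Lipschitz character is uniform, and I would invoke that; alternatively, one can work only on $\Omega_h\cap U_\delta$ along the segments $\theta t$, which stay in the domain when $\partial\Omega_h$ is transversal to the $t$-lines. I expect this uniformity to be the delicate point, and I would try the uniform extension argument first.
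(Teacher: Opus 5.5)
The paper gives no proof of its own; the lemma is imported from \cite{phiFEM_poisson}. Your core computation is the standard one and is correct. After flattening, $w/t=\int_0^1\partial_t w(y,\theta t)\,d\theta$ together with Minkowski's inequality and the weight $\theta^{j-1/2}$ gives $\|w/t\|_{H^s}\le C\|w\|_{H^{s+1}}$. This holds on any region that, in the $(y,t)$ coordinates, contains with each point $(y,t)$ the whole segment $\{(y,\theta t):0\le\theta\le1\}$, for functions vanishing on all of $\Gamma$ there.

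The gap is exactly the step you flag: transferring this to $\Omega_h$ with an $h$-independent constant. Your claim that $\Omega_h$ has uniform Lipschitz character because it is a union of shape-regular elements is false in general. $\partial\Omega_h$ consists of mesh facets of arbitrary orientation relative to $\Gamma$, forming teeth and overhangs of size $O(h)$, so there is no graph representation at a fixed scale. $\Omega_h$ need not even be Lipschitz: around a vertex where $\phi_h>0$, the active elements can form two separate angular sectors when an inactive element is wedged between two active ones, so $\Omega_h$ is locally two wedges touching at a point. For the same reason the segments $(y,\theta t)$ can leave $\Omega_h$, so your fallback fails too.

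More fundamentally, any extension of $v$ from $\Omega_h$ reproduces $v$ only on $\Omega_h$. The division formula, however, needs $w(y,0)=0$ for every $y$, including on $\Gamma\setminus\overline{\Omega_h}$. The paper explicitly allows $\Omega\not\subset\Omega_h$, and then the inequality with $\|v\|_{H^{s+1}(\Omega_h)}$ on the right cannot hold uniformly. Here is a counterexample in 2D:
\begin{itemize}
\item Let $\Omega$ be a disk of radius $R$, $\phi=|x|^2-R^2$, and $\phi_h$ its $P^1$ interpolant.
\item Let $F=AB$ be a boundary facet of $\Omega_h$ whose endpoints lie just outside $\Omega$, while the chord $AB$ cuts off a cap of $\Omega$ of depth $d$ and half-width $\ell\approx\sqrt{2Rd}\ll h$. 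The element beyond $F$ has nonnegative values of $\phi_h$ at its vertices, hence is inactive.
\item A smooth bump $v$ of radius $0.9\ell$ centred at the midpoint of $F$ vanishes on $\Gamma\cap\overline{\Omega_h}$ and satisfies $\|v\|_{H^1(\Omega_h)}\le C$.
\item But $|\phi|\approx 2R(d+\tau)$ at distance $\tau$ from $F$ inside $\Omega_h$, so $\|v/\phi\|_{L^2(\Omega_h)}^2\ge c\,\ell/d\sim d^{-1/2}$.
\item Since $d$ depends only on the cut configuration, it can be arbitrarily small; the same happens for $s\ge1$.
\end{itemize}

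The remedy is to work on a fixed neighbourhood $\mathcal O\supset\Omega_h\cup\Omega$ whose part near $\Gamma$ is $\{-\delta<t<\varepsilon\}$ in your coordinates. For $v\in H^{s+1}(\mathcal O)$ vanishing on $\Gamma$, your argument gives $\|v/\phi\|_{H^s(\Omega_h)}\le\|v/\phi\|_{H^s(\mathcal O)}\le C\|v\|_{H^{s+1}(\mathcal O)}$ with no $h$-dependence. That is all the paper needs. The lemma is only used in Lemma~\ref{interpolation} with $v=\tilde u$, the Stein extension of $u$ from the fixed domain $\Omega$, for which $\|\tilde u\|_{H^{s+1}(\mathcal O)}\le C\|u\|_{H^{s+1}(\Omega)}$.
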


\begin{lemma}[{\!\!\cite[Lemmas~3.4, 3.5]{phiFEM_poisson}}]\label{poincare1}
For $v_h\in S_h$, the following estimates hold:
\begin{align}\label{eq:poincare-inequ}
&\|v_h\|_{L^2(\Omega_h^\Gamma)}\le C h\,| v_h|_{H^1(\Omega_h^\Gamma)},\\
\label{eq:trace-inequ}
&\sum_{E\in\mathcal{F}_h^\Gamma}\| v_h\|_{L^2(E)}^2
 \le C h\,| v_h|_{H^1(\Omega_h^\Gamma)}^2
 \le C h^{-1}\| v_h\|_{L^2(\Omega_h^\Gamma)}^2,\\
&\| v_h\|_{L^2(\partial\Omega_h)}^2
 \le C h\,| v_h|_{H^1(\Omega_h^\Gamma)}^2
 \le C h^{-1}\| v_h\|_{L^2(\Omega_h^\Gamma)}^2.
\end{align}
\end{lemma}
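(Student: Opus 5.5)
The plan is to prove all three estimates elementwise on cut elements and then sum. The only structural input needed is that every $v_h\in S_h$ is a continuous, piecewise polynomial function of uniformly bounded degree $k+\deg\phi_h$, and that it vanishes somewhere in each cut element. Indeed, $v_h=\phi_h z_h$, and by definition every $T\in\mathcal{T}_h^\Gamma$ meets $\Gamma_h=\{\phi_h=0\}$. So there is a point $x_T\in\overline{T}$ with $v_h(x_T)=0$.

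\emph{Poincar\'e inequality \eqref{eq:poincare-inequ}.} Fix $T\in\mathcal{T}_h^\Gamma$ and $x\in T$. The mean value theorem on the convex set $\overline T$ gives
\[
|v_h(x)|=|v_h(x)-v_h(x_T)|\le h\|\nabla v_h\|_{L^\infty(T)} .
\]
Since $\nabla v_h|_T$ is a polynomial of bounded degree, the standard inverse estimate on the quasi-uniform mesh gives $\|\nabla v_h\|_{L^\infty(T)}\le Ch^{-d/2}|v_h|_{H^1(T)}$. Hence
\[
\|v_h\|_{L^2(T)}\le C h^{d/2}\|v_h\|_{L^\infty(T)}\le Ch|v_h|_{H^1(T)} .
\]
Squaring and summing over $T\in\mathcal{T}_h^\Gamma$ gives the first estimate. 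I deliberately avoid a compactness or norm-equivalence argument on "polynomials vanishing somewhere in $T$", because that set is not a linear space. The pointwise argument above is uniform, since the polynomial degree is bounded independently of $h$.

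\emph{Trace inequalities.} Each $E\in\mathcal{F}_h^\Gamma$ is a facet of some $T_2\in\mathcal{T}_h^\Gamma$. The discrete trace inequality for polynomials gives $\|v_h\|_{L^2(E)}^2\le Ch^{-1}\|v_h\|_{L^2(T_2)}^2$. Applying the elementwise Poincar\'e bound from the previous step turns this into $Ch|v_h|_{H^1(T_2)}^2$. Summing uses finite overlap, since each element has boundedly many facets. This gives the first inequality in \eqref{eq:trace-inequ}. The second follows from the inverse estimate $|v_h|_{H^1(T)}\le Ch^{-1}\|v_h\|_{L^2(T)}$.

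For $\partial\Omega_h$ the same chain works once we know the following: every facet $F\subset\partial\Omega_h$ belongs to some $T\in\mathcal{T}_h$ with $T\in\mathcal{T}_h^\Gamma$. The argument is as follows. $T\in\mathcal{T}_h$ contains a point with $\phi_h<0$. The background neighbor across $F$, if it exists, is not in $\mathcal{T}_h$, so $\phi_h\ge0$ on it and hence on $F$. If $F\subset\partial\mathcal{O}$ instead, we use that $\Gamma$ stays away from $\partial\mathcal{O}$. In either case continuity of $\phi_h$ forces a zero of $\phi_h$ in $\overline T$.

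The main point requiring care is exactly this geometric bookkeeping: that each relevant facet and element really contains a zero of $\phi_h$. Everything else is standard inverse and trace estimates.
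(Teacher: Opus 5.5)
Your proof is correct. The paper gives no argument of its own for this lemma; it imports the result from the $\phi$-FEM reference. Your elementwise route is a valid self-contained proof: a zero $x_T$ of $\phi_h$ in each cut element, the mean value theorem combined with the $L^\infty$--$L^2$ inverse estimate for $\nabla v_h|_T$, then the polynomial trace inequality and finite overlap. It needs only one zero of $\phi_h$ per cut element plus shape regularity, and never uses the nondegeneracy $|\nabla\phi_h|>m$ of Assumption~\ref{a2}.

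Two small bookkeeping points in the $\partial\Omega_h$ step can be tightened.

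\begin{itemize}
\item The paper writes $\Omega_h=(\bigcup\overline T)^o$, which suggests elements are open. In that case the zero you find in $\overline T$ might lie only on $\partial T$, so $T$ need not formally belong to $\mathcal T_h^\Gamma$. This causes no harm: then $\phi_h<0$ on all of $T$ and $\phi_h\ge 0$ on $F$, which forces $\phi_h\equiv 0$ on $F$. Hence $v_h\equiv 0$ on $F$, and that facet contributes nothing.
\item The case $F\subset\partial\mathcal O$ is in fact vacuous for small $h$. What it requires is that $\{\phi_h<0\}$, not merely $\Gamma$, stays a fixed distance away from $\partial\mathcal O$. This is implicit in the setup.
\end{itemize}
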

For later use, we also recall the standard elementwise scaled trace
inequality. For every function $v$ such that $v|_T\in H^1(T)$ for all
$T\in\mathcal T_h^\Gamma$, we have
\[
\|v\|_{L^2(\partial\Omega_h)}^2
\leq
C\left(
h^{-1}\|v\|_{L^2(\Omega_h^\Gamma)}^2
+
h\sum_{T\in\mathcal T_h^\Gamma}|v|_{H^1(T)}^2
\right).
\tag{3.3a}
\]

\begin{lemma}[Poincar\'e inequality on $ \Omega_h$]\label{poincare2}
For all $v_h\in S_h$, it holds that
\[\|v_h\|_{L^2(\Omega_h)}\le C|v_h|_{H^1(\Omega_h)}.\]
\end{lemma}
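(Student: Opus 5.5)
We need to prove the Poincaré inequality $\|v_h\|_{L^2(\Omega_h)}\le C|v_h|_{H^1(\Omega_h)}$ for all $v_h\in S_h$. The idea is that every $v_h=\phi_h z_h\in S_h$ vanishes on $\Gamma_h=\{\phi_h=0\}$. This zero set lies inside $\Omega_h$ and is "large": it is a closed hypersurface at distance $O(h)$ from $\Gamma$. The inequality therefore reduces to a Friedrichs-type inequality on a slightly enlarged fixed domain.

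\textbf{Step 1: the cut region.} Split $\|v_h\|_{L^2(\Omega_h)}^2=\|v_h\|_{L^2(\Omega_h^\Gamma)}^2+\|v_h\|_{L^2(\Omega_h\setminus\Omega_h^\Gamma)}^2$. On the first part, \eqref{eq:poincare-inequ} of Lemma~\ref{poincare1} gives $\|v_h\|_{L^2(\Omega_h^\Gamma)}\le Ch|v_h|_{H^1(\Omega_h^\Gamma)}$, which is better than needed.

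\textbf{Step 2: the interior region.} On $\Omega_h^i:=\Omega_h\setminus\overline{\Omega_h^\Gamma}$ we have $\phi_h<0$, so we cannot use the vanishing of $v_h$ there directly. Instead we use that $v_h\in H^1(\Omega_h)$ vanishes on $\Gamma_h$. Set $D_h:=\{\phi_h<0\}\subset\Omega_h$; then $v_h|_{D_h}\in H^1_0(D_h)$, since $v_h$ is continuous and zero on $\partial D_h=\Gamma_h$. Extending by zero gives a function in $H^1_0(\mathcal{O})$ for the fixed bounded background domain $\mathcal{O}$. The classical Friedrichs inequality on $\mathcal{O}$, whose constant depends only on $\operatorname{diam}\mathcal{O}$ and not on $h$, then yields
\[
\|v_h\|_{L^2(D_h)}\le C_{\mathcal{O}}\,|v_h|_{H^1(D_h)}\le C|v_h|_{H^1(\Omega_h)}.
\]
Since $\Omega_h^i\subset D_h$ (interior elements lie in $\{\phi_h<0\}$), this controls the second part.

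\textbf{Step 3: combine.} Adding the two bounds gives the claim with $C$ independent of $h$.

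The only delicate point is justifying that the zero extension of $v_h|_{D_h}$ lies in $H^1$, i.e. that $\partial D_h=\Gamma_h$ is regular enough. This holds because $v_h$ is continuous and piecewise polynomial, and by Assumption~\ref{a2} $|\nabla\phi_h|>m$ near $\Gamma$, so $\Gamma_h$ is a Lipschitz level set. Alternatively, one can avoid regularity issues altogether. Define $\tilde v:=v_h$ on $D_h$ and $\tilde v:=0$ elsewhere; this equals $\min(v_h,0)+\max(v_h,0)$ restricted via $\phi_h$-sign truncation. Continuity across $\Gamma_h$, where $v_h=0$, gives $\tilde v\in W^{1,\infty}$ piecewise, and hence $\tilde v\in H^1(\mathcal{O})$ with $\nabla\tilde v=\chi_{D_h}\nabla v_h$.

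I expect this membership argument to be the main obstacle, and I would handle it by the standard fact that gluing $H^1$ functions that agree (here, both vanish) along the interface produces an $H^1$ function.
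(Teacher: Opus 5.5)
Your argument is correct and is essentially the paper's proof: you cover $\Omega_h$ by the cut region $\Omega_h^\Gamma$, where \eqref{eq:poincare-inequ} applies, and by $\{\phi_h<0\}$, where a Poincar\'e inequality holds because $v_h$ vanishes on $\{\phi_h=0\}$. Your zero extension to the fixed domain $\mathcal{O}$ usefully makes explicit why that constant is independent of $h$, which the paper only calls ``standard''. The regularity issue you flag is a non-issue, since any $v\in H^1(D)\cap C(\overline{D})$ vanishing on $\partial D$ lies in $H^1_0(D)$ for an arbitrary bounded open set $D$. Your $\min/\max$ formula is garbled; an explicit Lipschitz extension is $\min(\phi_h,0)\,z_h$ with $v_h=\phi_h z_h$.
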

\begin{proof}
Note that $v_h=0$ on $\{\phi_h=0\}$. 
By the standard Poincar\'e inequality, we obtain
\begin{equation}\label{pc}
\|v_h\|_{L^2(\{\phi_h<0\})}
\le C\,|v_h|_{H^1(\{\phi_h<0\})}.
\end{equation}
Hence, combined with Lemma~\ref{poincare1}, it follows that
\begin{align*}
\|v_h\|_{L^2(\Omega_h)}
&\le \|v_h\|_{L^2(\Omega_h^\Gamma)}
   + \|v_h\|_{L^2(\{\phi_h<0\})} 
   \le C\,|v_h|_{H^1(\Omega_h)}.
\end{align*}
\hfill\end{proof}

The following lemma establishes the interpolation error estimate, where the dependence on $\|\phi - \phi_h\|_{H^1(\Omega_h)}$ is explicitly stated.
\begin{lemma}
\label{interpolation}
There exists an operator
\[
\widetilde I_h:
\left\{
v\in H^{k+1}(\Omega_h)\cap H^3(\Omega_h):
v|_{\partial\Omega}=0
\right\}
\longrightarrow S_h
\]
such that
\[
\|v-\widetilde I_hv\|_{H^1(\Omega_h)}
\leq
C\bigl(h^k+\|\phi-\phi_h\|_{H^1(\Omega_h)}\bigr)
\left(
\|v\|_{H^{k+1}(\Omega_h)}
+\|v\|_{H^3(\Omega_h)}
\right).
\]
Moreover,
\[
h^{1/2}
\|\nabla(v-\widetilde I_hv)\|_{L^2(\partial\Omega_h)}
\leq
C\bigl(h^k+\|\phi-\phi_h\|_{H^1(\Omega_h)}\bigr)
\left(
\|v\|_{H^{k+1}(\Omega_h)}
+\|v\|_{H^3(\Omega_h)}
\right).
\tag{3.9a}
\]
\end{lemma}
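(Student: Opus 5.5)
The plan is to build $\widetilde I_h$ from the factorization $v=\phi w$ with $w:=v/\phi$, which is available because $v$ vanishes on $\partial\Omega$. By Lemma~\ref{hardy} (Hardy's inequality) we have $w\in H^k(\Omega_h)\cap H^2(\Omega_h)$ with
\[
\|w\|_{H^k(\Omega_h)}+\|w\|_{H^2(\Omega_h)}\le C\bigl(\|v\|_{H^{k+1}(\Omega_h)}+\|v\|_{H^3(\Omega_h)}\bigr).
\]
Here Assumption~\ref{a1} guarantees that $\phi$ has no zeros off $\Gamma$ and that $\nabla\phi\neq0$ on $\Gamma$, so the quotient is well defined on all of $\Omega_h$. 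We then set
\[
\widetilde I_h v:=\phi_h\, I_h w \in S_h,
\]
where $I_h$ is the Lagrange interpolant onto degree-$k$ continuous elements. If $w$ is only in $H^2$ and $d=3$, a Scott--Zhang type quasi-interpolant can be used instead. We then split the error as
\[
v-\widetilde I_hv=\phi(w-I_hw)+(\phi-\phi_h)\,w+(\phi-\phi_h)(I_hw-w).
\]

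For the $H^1$ estimate the three terms are handled as follows.
\begin{itemize}
\item The first term satisfies $\|\phi(w-I_hw)\|_{H^1}\le C\|w-I_hw\|_{H^1}\le Ch^{k}\|w\|_{H^{k+1}}$, because $\phi$ is smooth. If $w$ is only in $H^{k}$, this step would give just $h^{k-1}$.
\item The second term satisfies $\|(\phi-\phi_h)w\|_{H^1}\le \|\phi-\phi_h\|_{H^1}\|w\|_{L^\infty}+\|\phi-\phi_h\|_{L^\infty}\|\nabla w\|_{L^\infty}$. This requires $w\in W^{1,\infty}$, which follows from $w\in H^2$ only in a borderline way.
\item The third term is of higher order, by the $L^\infty$ and $W^{1,\infty}$ stability of the interpolant together with $\|\phi-\phi_h\|_{W^{1,\infty}}\le C$.
\end{itemize}

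The $H^1$ bound is where I expect the main difficulty: Hardy costs one derivative, and the regularity bookkeeping has to close. I would try to repair this in two ways. First, note that $\|v\|_{H^3}$ appears in the statement precisely so that $w\in H^2\subset L^\infty$ and $\nabla w\in L^6$. Second, control the second term by Hölder,
\[
\|\nabla(\phi-\phi_h)\,w\|_{L^2}\le \|\nabla(\phi-\phi_h)\|_{L^2}\|w\|_{L^\infty},\qquad \|(\phi-\phi_h)\nabla w\|_{L^2}\le\|\phi-\phi_h\|_{L^3}\|\nabla w\|_{L^6},
\]
followed by Sobolev embedding $\|\phi-\phi_h\|_{L^3}\le C\|\phi-\phi_h\|_{H^1}$. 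For the first term I would avoid losing the power of $h$ by interpolating $w$ with the \emph{degree-$k$} space, which gives only $h^{k-1}$ from $w\in H^k$. To recover $h^k$ I would instead write $\phi(w-I_hw)=(v-I_h v)-\bigl(\phi I_hw - I_h(\phi I_hw)\bigr)$-type identities, comparing with the standard interpolant $I_hv$ of $v\in H^{k+1}$. Alternatively, I would accept that the theorem's $h^k$ comes from interpolating $v$ and only use $\phi I_h w$ as a correction near the boundary. If neither closes, I would fall back on taking $\phi_h$ of degree $\ge1$ and using a superapproximation estimate for $\phi I_h w-I_h(\phi I_h w)$ to pass the smooth factor through the interpolant.

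For (3.9a), I would apply the scaled trace inequality (3.3a) to $\nabla(v-\widetilde I_hv)$ on the cut elements $\mathcal T_h^\Gamma$:
\[
h\|\nabla e\|_{L^2(\partial\Omega_h)}^2\le C\Bigl(\|\nabla e\|_{L^2(\Omega_h^\Gamma)}^2+h^2\sum_{T\in\mathcal T_h^\Gamma}|e|_{H^2(T)}^2\Bigr).
\]
The first term is already bounded. For the elementwise $H^2$ seminorm I would repeat the three-term splitting: the piecewise polynomial parts are handled by inverse estimates, which produce $h\cdot h^{-1}\|\nabla(\phi-\phi_h)\|$, while $\|v\|_{H^3}$ controls the genuine second derivatives of $\phi w$.
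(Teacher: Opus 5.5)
Your decomposition and the H\"older--Sobolev treatment of $(\phi-\phi_h)w$ match the paper. The decisive step is obtaining $h^k$ for $\phi(w-I_hw)$ in $H^1$, even though Hardy only gives $w\in H^k$. You leave that step as a list of hedged options, and you file superapproximation as a fallback when it is exactly what makes your first option close. Commit to that option. Since $I_hw=w$ at the Lagrange nodes, $I_h\bigl(\phi(w-I_hw)\bigr)=0$. Hence $I_hv=I_h(\phi I_hw)$ and
\[
\phi(w-I_hw)=(v-I_hv)-\bigl(\phi I_hw-I_h(\phi I_hw)\bigr).
\]
The first piece is bounded by $Ch^k\|v\|_{H^{k+1}}$. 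For the second, with $p=I_hw|_T\in P^k(T)$, one has $\|\phi p-I_h(\phi p)\|_{H^1(T)}\le Ch^k|\phi p|_{H^{k+1}(T)}\le Ch^k\|p\|_{H^k(T)}$ because $D^{k+1}p=0$. Moreover $\|I_hw\|_{H^k(T)}\le C\|w\|_{H^k(T)}$ for $k\ge2$; for $k=1$ use $\|I_hw\|_{H^1(T)}\le C\|w\|_{H^2(T)}$. Hardy then finishes.

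This is a genuinely different route from the paper's. The paper takes a Scott--Zhang interpolant $z_h$ of $z=v/\phi$ and splits the elements. For patches $\omega_T$ within $3h$ of $\Gamma$, one has $|\phi|\le Ch$, which pays for the lost power of $h$. Farther away, $|\phi|>c_0h$, and the weighted local bound $|z|_{H^{k+1}(\omega_T)}\le(\min_{\omega_T}|\phi|)^{-1}\bigl(|v|_{H^{k+1}(\omega_T)}+C\|z\|_{H^k(\omega_T)}\bigr)$ supplies the extra derivative, with $\|\phi\|_{L^\infty(T)}/\min_{\omega_T}|\phi|$ bounded. Your version avoids the case distinction and the weighted regularity, but it needs a \emph{nodal} interpolant. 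Switching to Scott--Zhang, as you suggest for $d=3$, destroys the identity, and it is unnecessary anyway since $H^2\subset C^0$ for $d\le3$.

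Your third-term argument does not go through for $k\le2$. $W^{1,\infty}$ stability requires $w\in W^{1,\infty}$, which $w\in H^2$ does not provide, and $\|\phi-\phi_h\|_{W^{1,\infty}}\le C$ by itself only gives $C\|w-I_hw\|_{H^1}=O(h^{k-1})$. Use the same H\"older--Sobolev step as the paper: $\|\phi-\phi_h\|_{L^6}\|\nabla(w-I_hw)\|_{L^3}+\|\nabla(\phi-\phi_h)\|_{L^2}\|w-I_hw\|_{L^\infty}\le C\|\phi-\phi_h\|_{H^1}\|w\|_{H^2}$.

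The plan for (3.9a) has a real gap. Applying (3.3a) to the full error forces you to bound $h\bigl(\sum_{T\in\mathcal T_h^\Gamma}|v-\widetilde I_hv|_{H^2(T)}^2\bigr)^{1/2}$ by $C(h^k+\|\phi-\phi_h\|_{H^1})$, and the pieces involving $\phi-\phi_h$ do not cooperate. First, $\phi-\phi_h$ is not piecewise polynomial, so no inverse estimate turns $D^2(\phi-\phi_h)$ into $h^{-1}\|\nabla(\phi-\phi_h)\|$. Second, $h\|(\phi-\phi_h)D^2w\|_{L^2}$ can only be bounded via $\|\phi-\phi_h\|_{L^\infty}$, which is not dominated by $\|\phi-\phi_h\|_{H^1}$ for $d=2,3$. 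Third, bounding the genuine second derivatives by $\|v\|_{H^3}$ only produces a factor $h$, not $h^k$ for $k\ge2$.

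The missing idea is to insert the standard interpolant: $v-\widetilde I_hv=(v-I_hv)+(I_hv-\widetilde I_hv)$. Apply (3.3a) only to $\nabla(v-I_hv)$, where standard interpolation estimates give $Ch^k\|v\|_{H^{k+1}}$. The remainder $I_hv-\widetilde I_hv$ is a piecewise polynomial of bounded degree, so the inverse trace inequality gives $h^{1/2}\|\nabla(I_hv-\widetilde I_hv)\|_{L^2(\partial\Omega_h)}\le C\|\nabla(I_hv-\widetilde I_hv)\|_{L^2(\Omega_h^\Gamma)}$. The triangle inequality and the already proved $H^1$ estimate then bound this. No second derivatives of $\phi-\phi_h$ are ever needed.
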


\begin{proof}
Let $v\in H^{k+1}(\Omega_h)\cap H^3(\Omega_h)$ with $v=0$ on $\partial\Omega$, and set $z=v/\phi$. From Lemma \ref{hardy}, we have $z\in H^{k}(\Omega_h)$ and $\|z\|_{H^k(\Omega_h)}\le C\|v\|_{H^{k+1}(\Omega_h)}$. 
Let $z_h$ be the Scott-Zhang 
interpolation of $z$. Define $\tilde{I}_h v := \phi_h z_h$. The interpolation error can be decomposed as follows:
\begin{equation}
\label{eq:split}
\phi z - \phi_h z_h = \underbrace{(\phi-\phi_h)z}_{T_1} + \underbrace{\phi(z-z_h)}_{T_2} - \underbrace{(\phi-\phi_h)(z-z_h)}_{T_3}.
\end{equation}
We estimate the terms $T_1$, $T_2$, and $T_3$ in the $H^1(\Omega_h)$-norm separately. 
\begin{align*}
\|T_1\|_{H^1(\Omega_h)} 
&\le \|z\nabla(\phi-\phi_h)\|_{L^2(\Omega_h)} + \|(\phi-\phi_h)\nabla z\|_{L^2(\Omega_h)} + \|(\phi-\phi_h)z\|_{L^2(\Omega_h)} \\
&\le \|z\|_{L^\infty(\Omega_h)}\|\nabla(\phi-\phi_h)\|_{L^2(\Omega_h)} 
+ \|\phi-\phi_h\|_{L^6(\Omega_h)}\big(\|\nabla z\|_{L^3(\Omega_h)}+\|z\|_{L^3(\Omega_h)}\big) \\
&\le C\|z\|_{H^2(\Omega_h)}\|\phi-\phi_h\|_{H^1(\Omega_h)}.
\end{align*}
Further we have $\|z\|_{H^2(\Omega_h)}\le C
\|v\|_{H^{3}(\Omega_h)}$.
For the third term, we employ H\"older's inequality and Sobolev embeddings:
\begin{align*}
\|T_3\|_{H^1(\Omega_h)} 
&\le \|(z-z_h)\nabla(\phi-\phi_h)\|_{L^2(\Omega_h)} + \|(\phi-\phi_h)\nabla(z-z_h)\|_{L^2(\Omega_h)} \\
&\quad + \|(\phi-\phi_h)(z-z_h)\|_{L^2(\Omega_h)} \\
&\le \|z-z_h\|_{L^\infty(\Omega_h)}\|\phi-\phi_h\|_{H^1(\Omega_h)} 
+ \|z-z_h\|_{W^{1,3}(\Omega_h)}
\|\phi-\phi_h\|_{L^6(\Omega_h)}\\
&\le C\|z\|_{H^{2}(\Omega_h)}\|\phi-\phi_h\|_{H^1(\Omega_h)} \\
&\le C\|v\|_{H^{3}(\Omega_h)}\|\phi-\phi_h\|_{H^1(\Omega_h)}.
\end{align*}
Here, we utilized the standard Sobolev embeddings: $H^2(\Omega_h)\hookrightarrow L^\infty(\Omega_h)$ and $H^1(\Omega_h)\hookrightarrow L^6(\Omega_h)\hookrightarrow L^3(\Omega_h)$. 
For the second term, using the boundedness of $\phi$, we obtain
\begin{align*}
\|T_2\|_{H^1(\Omega_h)} 
&\le \|(z-z_h)\nabla\phi\|_{L^2(\Omega_h)} + \|\phi\nabla(z-z_h)\|_{L^2(\Omega_h)} + \|\phi(z-z_h)\|_{L^2(\Omega_h)} \\
&\le C h^k \|\phi\|_{W^{1,\infty}(\Omega_h)}\|v\|_{H^{k+1}(\Omega_h)} + \|\phi\nabla(z-z_h)\|_{L^2(\Omega_h)}
+\|\phi(z-z_h)\|_{L^2(\Omega_h)}. 
\end{align*}
To estimate the last two terms, we follow the approach of \cite[Lemma 6]{phifem_stokes} and distinguish between elements close to the boundary and elements away from it.
By Assumption~\ref{a1} and the compactness of $\Gamma$, 
$\nabla\phi$ is bounded and nonvanishing in a neighborhood of $\Gamma$.
Together with the fact that $\phi \neq 0$ for $x\not\in\Gamma$, there exists a constant $c_0>0$ such that, for sufficiently small $h$, 
\[
|\phi(x)| > c_0 h \qquad \text{for all } x \text{ satisfying } \mathrm{dist}(x,\Gamma) > 3h .
\]
Now let $\omega_T$ be the patch corresponding to the Scott-Zhang interpolation on element $T$.
We first consider the case $\mathrm{dist}(\omega_T,\Gamma)> 3h$,
in which we may invoke \cite[(33)]{phifem_stokes} to obtain the higher-derivative estimate on the patch $\omega_T$:
\begin{equation}
|z|_{H^{k+1}(\omega_T)}
\le \frac{1}{\min_{x\in \omega_T} |\phi(x)|}\Big( |v|_{H^{k+1}(\omega_T)} + C \|z\|_{H^{k}(\omega_T)}\Big).
\end{equation}
Consequently, it follows that
\begin{equation}
\|\phi \nabla(z-z_h)\|_{L^2(T)}
\le C h^k \frac{\|\phi\|_{L^\infty(T)}}{\min_{x\in \omega_T}|\phi(x)|}
\Big( |v|_{H^{k+1}(\omega_T)}+\|z\|_{H^{k}(\omega_T)}\Big).
\end{equation}
Then we can bound the ratio as
\begin{align}
\frac{\|\phi\|_{L^\infty(T)}}{\min_{x\in \omega_T}|\phi(x)|}
&= 1+\frac{\|\phi\|_{L^\infty(T)}-\min_{x\in \omega_T}|\phi(x)|}{\min_{x\in \omega_T}|\phi(x)|}
\le 1+\frac{2\|\nabla\phi\|_{L^\infty}}{c_0}.
\end{align}
Next, we consider $T$ with $\mathrm{dist}(\omega_T,\Gamma)<3h$. 
Since $\|\nabla\phi\|_{L^\infty}$ is bounded, we have
\begin{equation}
\|\phi \nabla(z-z_h)\|_{L^2(T)}
\le \|\phi\|_{L^\infty(T)}\,\|\nabla(z-z_h)\|_{L^2(\omega_T)}
\le C\, h^k \|z\|_{H^{k}(\omega_T)} .
\end{equation}
By summing the above estimates over all elements $T$, we obtain
\begin{align*}
\|T_2\|_{H^1(\Omega_h)}
\le C h^k \bigl(1+\|\phi\|_{W^{1,\infty}(\Omega_h)}\bigr)\,\|v\|_{H^{k+1}(\Omega_h)}.
\end{align*}
To prove (3.9a), let $I_hv$ denote the standard degree-$k$
interpolant of $v$. We split
\[
v-\widetilde I_hv
=
(v-I_hv)+(I_hv-\widetilde I_hv).
\]

For the first term, applying the scaled trace inequality (3.3a)
elementwise to $\nabla(v-I_hv)$ and using the standard interpolation
estimates, we obtain
\[
\begin{aligned}
h^{1/2}
\|\nabla(v-I_hv)\|_{L^2(\partial\Omega_h)}
&\leq
C\left(
\|\nabla(v-I_hv)\|_{L^2(\Omega_h^\Gamma)}
+
h\left(
\sum_{T\in\mathcal T_h^\Gamma}
|v-I_hv|_{H^2(T)}^2
\right)^{1/2}
\right)                                                       \\
&\leq
Ch^k\|v\|_{H^{k+1}(\Omega_h)}.
\end{aligned}
\]

For the second term, $I_hv-\widetilde I_hv$ is an elementwise
polynomial of uniformly bounded degree. Hence, the inverse trace
inequality gives
\[
\begin{aligned}
h^{1/2}
\|\nabla(I_hv-\widetilde I_hv)\|_{L^2(\partial\Omega_h)}
&\leq
C\|\nabla(I_hv-\widetilde I_hv)\|_{L^2(\Omega_h^\Gamma)}       \\
&\leq
C\left(
\|\nabla(v-I_hv)\|_{L^2(\Omega_h)}
+
\|\nabla(v-\widetilde I_hv)\|_{L^2(\Omega_h)}
\right)                                                       \\
&\leq
C\bigl(h^k+\|\phi-\phi_h\|_{H^1(\Omega_h)}\bigr)
\left(
\|v\|_{H^{k+1}(\Omega_h)}
+\|v\|_{H^3(\Omega_h)}
\right).
\end{aligned}
\]
Combining the two estimates proves (3.9a).
\hfill\end{proof}

Next, we recall  \cite[Lemma~3.7]{phiFEM_poisson}, which establishes a crucial coercivity property once a sufficiently strong penalty term is added.
\begin{lemma}\label{poissonlemma}
Under Assumptions~\ref{a1}--\ref{a3}, there exists a constant 
$C_{\mathrm{coer}}\in (0,1)$, independent of $h$, such that, 
provided $\sigma>0$ is sufficiently large, we have
\begin{align}\label{c031}\notag
&(\nabla u_h,\nabla u_h)_{L^2(\Omega_h)}-(\partial_n u_h,u_h)_{L^2(\partial\Omega_h)}
+\sigma h^2\sum_{T\in\mathcal{T}_h^\Gamma}
\|\Delta u_h\|_{L^2(T)}^2
+\sigma j_h(u_h,u_h)\\
&\quad \ge C_{\mathrm{coer}}\Big((\nabla u_h,\nabla u_h)_{L^2(\Omega_h)}
+ h^2\sum_{T\in\mathcal{T}_h^\Gamma}\|\Delta u_h\|_{L^2(T)}^2
+ j_h(u_h,u_h)
\Big).
\end{align}
\end{lemma}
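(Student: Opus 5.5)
The key difficulty is the boundary term $-(\partial_n u_h,u_h)_{L^2(\partial\Omega_h)}$. It has no sign and is of the same order as $|u_h|_{H^1(\Omega_h^\Gamma)}^2$. The plan is to bound it by a small multiple of $\|\nabla u_h\|_{L^2(\Omega_h)}^2$ plus a large multiple of the two stabilization terms; choosing $\sigma$ large then finishes the proof. Since $u_h=\phi_h v_h$ and $\phi_h$ vanishes on $\Gamma_h$, the trace of $u_h$ on $\partial\Omega_h$ is small relative to gradients in the cut strip. The trace part of Lemma~\ref{poincare1} gives
\[
|(\partial_n u_h,u_h)_{L^2(\partial\Omega_h)}|\le \|\nabla u_h\|_{L^2(\partial\Omega_h)}\,C h^{1/2}|u_h|_{H^1(\Omega_h^\Gamma)},
\]
and the inverse trace inequality gives $\|\nabla u_h\|_{L^2(\partial\Omega_h)}\le Ch^{-1/2}\|\nabla u_h\|_{L^2(\Omega_h^\Gamma)}$. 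Altogether
\[
|(\partial_n u_h,u_h)_{\partial\Omega_h}|\le C_1\|\nabla u_h\|_{L^2(\Omega_h^\Gamma)}^2 .
\]
The constant $C_1$ is not small, so this estimate alone is insufficient.

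The remedy is to show that the gradient on the cut zone is controlled by the gradient in the interior plus the stabilization terms:
\[
\|\nabla u_h\|_{L^2(\Omega_h^\Gamma)}^2\le \varepsilon\|\nabla u_h\|_{L^2(\Omega_h\setminus\Omega_h^\Gamma)}^2+C_\varepsilon\Big(h^2\sum_{T\in\mathcal T_h^\Gamma}\|\Delta u_h\|_{L^2(T)}^2+j_h(u_h,u_h)\Big).
\]
Only a bound with a constant strictly less than $1/C_1$ on the interior part is needed. I would obtain it by a patch argument along the paths of Assumption~\ref{a3}. For two neighbouring elements $T_1,T_2$ sharing a facet $E\in\mathcal F_h^\Gamma$, the standard ghost-penalty estimate, via polynomial extension and scaling, gives
\[
\|\nabla u_h\|_{T_2}^2\le C\|\nabla u_h\|_{T_1}^2+Ch\|[\partial_n u_h]\|_E^2
\]
up to higher normal derivative jumps. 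Iterating at most $N$ times reaches an interior element. However, a constant $C$ times the interior gradient does not yield smallness.

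To gain smallness I would instead write, on each cut element,
\[
\|\nabla u_h\|_{L^2(T)}^2=-(\Delta u_h,u_h)_T+(\partial_n u_h,u_h)_{\partial T},
\]
and sum over $\Omega_h^\Gamma$. The interior facet contributions become jump terms of the form $([\partial_n u_h],u_h)_E$, which are bounded by $j_h^{1/2}\,h^{-1/2}\|u_h\|_E$. Here $\|u_h\|_E\le Ch^{1/2}|u_h|_{H^1(\Omega_h^\Gamma)}$ by Lemma~\ref{poincare1}. The volume terms are bounded by $h\|\Delta u_h\|\cdot h^{-1}\|u_h\|_{L^2(\Omega_h^\Gamma)}$, with $h^{-1}\|u_h\|_{L^2(\Omega_h^\Gamma)}\le C|u_h|_{H^1}$ by \eqref{eq:poincare-inequ}. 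What remains is the boundary term on $\partial\Omega_h$, which cancels the original one, and the facets between $\Omega_h^\Gamma$ and the interior, which again produce an interior-gradient term. Young's inequality with a large $\sigma$ absorbs everything except this interface piece.

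The \textbf{main obstacle} is controlling that interface term with a constant below $1$. I would first try to exploit the fact that $u_h$ vanishes on $\Gamma_h$, using a one-dimensional Hardy/Poincaré argument across the strip of width $O(h)$ as in \cite[Lemma~3.7]{phiFEM_poisson}. This should show that $\|\nabla u_h\|_{L^2(B_h)}^2$ is at most a fraction $\theta<1$ of $\|\nabla u_h\|_{L^2(\Omega_h)}^2$, plus penalty terms, and so yield $C_{\mathrm{coer}}$. If that fails, I would simply follow the cited $\phi$-FEM proof.
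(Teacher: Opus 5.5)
Your proposal does not reach a proof. Both of your routes stall at the step you call the main obstacle, and the last paragraph defers to the cited argument. The paper itself only quotes \cite[Lemma~3.7]{phiFEM_poisson} here, but as a proof plan this is a genuine gap. The missing idea is to integrate by parts over the strip $B_h=\Omega_h\setminus\{\phi_h<0\}$, which is bounded by $\partial\Omega_h$ and $\Gamma_h$, rather than over the whole cut elements. Because $u_h=0$ on $\Gamma_h$, Green's formula on the pieces $T\cap B_h$ gives the identity of the paper's continuity proof with $\eta_h=u_h$:
\[
(\partial_n u_h,u_h)_{L^2(\partial\Omega_h)}=\|\nabla u_h\|_{L^2(B_h)}^2+\sum_{T\in\mathcal T_h^\Gamma}(\Delta u_h,u_h)_{L^2(T\cap B_h)}+\mathcal E,
\]
where $\mathcal E$ collects the terms $([\partial_n u_h]_E,u_h)_{L^2(E\cap B_h)}$, $E\in\mathcal F_h^\Gamma$. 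No term appears on an inner boundary where $u_h\neq 0$; that is exactly the interface piece that defeats your second attempt. Write $S(u_h):=h^2\sum_{T\in\mathcal T_h^\Gamma}\|\Delta u_h\|^2_{L^2(T)}+j_h(u_h,u_h)$. By Lemma~\ref{poincare1} and Young's inequality, the last two terms are bounded by $\varepsilon\|\nabla u_h\|^2_{L^2(\Omega_h^\Gamma)}+C_\varepsilon S(u_h)$. Since $B_h\subset\Omega_h^\Gamma$, the left-hand side of the asserted inequality is at least
\[
\|\nabla u_h\|^2_{L^2(\Omega_h)}-(1+\varepsilon)\|\nabla u_h\|^2_{L^2(\Omega_h^\Gamma)}+(\sigma-C_\varepsilon)S(u_h).
\]
So the cut-zone gradient enters with coefficient $1+\varepsilon$, not with your unquantified $C_1$. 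Your trace--inverse bound throws this information away. After that, no bound with an arbitrarily small interior coefficient holds. For example, with $\phi_h$ the interpolant of $\phi$, take $u_h=\phi_h v_h$ with $v_h\equiv 1$ on the cut elements and their neighbours and $v_h=0$ a couple of layers further in. Its squared gradient norms in $\Omega_h^\Gamma$ and in $\Omega_h\setminus\Omega_h^\Gamma$ are both of order $h$, while $S(u_h)=o(h)$. Your first route therefore hinges on comparing two unquantified constants.

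Hence no smallness is needed, only $\|\nabla u_h\|^2_{L^2(\Omega_h^\Gamma)}\le\alpha\|\nabla u_h\|^2_{L^2(\Omega_h)}+\beta S(u_h)$ with some $\alpha<1$. This follows from the very chaining estimate you dismissed. If $\|\nabla u_h\|^2_{L^2(\Omega_h^\Gamma)}\le M\|\nabla u_h\|^2_{L^2(\Omega_h\setminus\Omega_h^\Gamma)}+M'S(u_h)$ with any $M$, adding $M\|\nabla u_h\|^2_{L^2(\Omega_h^\Gamma)}$ to both sides gives $\alpha=M/(1+M)$. Then pick $\varepsilon$ with $(1+\varepsilon)\alpha<1$ and $\sigma$ large. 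This is the mechanism the paper runs for the Stokes boundary term in the proof of Lemma~\ref{Stokes:inf-sup}: conversion on $B_h$, then Lemma~\ref{phifem_stokes} with $\gamma<1$. A Hardy argument across the strip cannot supply $\alpha$; it only yields $\|u_h\|\lesssim h\|\nabla u_h\|$, which is what absorbs the volume and facet terms.

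Your pairwise step must also use the penalties actually present. Functions in $S_h$ are piecewise polynomials of degree at least two, and $j_h$ contains only first-order jumps. On a two-element patch, adding $c\,\ell_E^2$ on $T_2$ ($\ell_E$ affine, vanishing on $E$) leaves $[\partial_n u_h]_E$ unchanged. So ``up to higher normal derivative jumps'' is not available. The correct estimate, for $T_2\in\mathcal T_h^\Gamma$, is $|u_h|^2_{H^1(T_2)}\le C\big(|u_h|^2_{H^1(T_1)}+h\|[\partial_n u_h]_E\|^2_{L^2(E)}+h^2\|\Delta u_h\|^2_{L^2(T_2)}\big)$. It holds by finite dimensionality and scaling, because a harmonic polynomial vanishing together with its normal derivative on a hyperplane is zero. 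Chain it along the paths of Assumption~\ref{a3}; each step enters a cut element, so the crossed facet lies in $\mathcal F_h^\Gamma$ and its Laplacian is penalized. This gives $M,M'$ depending only on $N$ and the mesh regularity.
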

\subsection{Proof of Theorem~\ref{thm2.1}}
The proof of Theorem~\ref{thm2.1} is divided into two parts. 
We first establish the stability of the proposed scheme, and then verify its consistency.

\subsubsection{Stability}
We rewrite \eqref{2.7} in the following abstract form:
\begin{equation}\label{2.10}
a_h(w_h,u_h;\eta_h,v_h)=\ell(\eta_h,v_h),
\end{equation}
where the bilinear and linear forms are defined as follows
\begin{align*}
a_h(w_h,u_h;\eta_h,v_h):=&{}
(w_h,\eta_h)_{H^1(\Omega_h)}
+(\nabla u_h,\nabla\eta_h)_{L^2(\Omega_h)}-(\partial_n u_h,\eta_h)_{L^2(\partial\Omega_h)}\\
&{}\hspace{5.8em}-(\nabla w_h,\nabla v_h)_{L^2(\Omega_h)}
+(w_h,\partial_n v_h)_{L^2(\partial\Omega_h)}\\
&{}\hspace{5.8em}
+h^2\sum_{T\in\mathcal{T}_h^\Gamma}(\Delta u_h,\Delta v_h)_{L^2(T)}+j_h(u_h,v_h),\\
\ell(\eta_h,v_h):=&{}
(f,\eta_h)_{L^2(\Omega_h)}-h^2\sum_{T\in\mathcal{T}_h^\Gamma}(f, \Delta v_h)_{L^2(T)}.
\end{align*}
We define the norm $\vertiii{\cdot}_h$ on $S_h\times S_h$ by
\[
\vertiii{w_h,u_h}_h^2
:= \|\nabla w_h\|_{L^2(\Omega_h)}^2
 + \|\nabla u_h\|_{L^2(\Omega_h)}^2
 + h^2\sum_{T\in\mathcal{T}_h^\Gamma}
 \|\Delta u_h\|^2_{L^2(T)}
 + j_h(u_h,u_h).
\]
It is direct to deduce that 
\begin{align}
\label{prop1}
\vertiii{u_h,u_h}_h^2\le 2 \vertiii{w_h,u_h}_h^2.
\end{align}
We now establish the continuity and coercivity of the bilinear form $a_h$.

\begin{lemma}[Continuity]
The bilinear form $a_h$ is continuous with respect to the norm $\vertiii{\cdot}_h$, i.e., there exists $C>0$ such that
\begin{equation}\label{2.11}
a_h(w_h,u_h;\eta_h,v_h)
\le C\,\vertiii{w_h,u_h}_h\,\vertiii{\eta_h,v_h}_h.
\end{equation}
\end{lemma}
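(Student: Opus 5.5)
Our plan is to bound each of the seven terms of $a_h$ separately by Cauchy--Schwarz, and then control every factor by the norm $\vertiii{\cdot}_h$, using Lemmas~\ref{poincare1} and~\ref{poincare2}.

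\emph{Volume and penalty terms.} The volume terms are direct. By Lemma~\ref{poincare2}, $(w_h,\eta_h)_{H^1(\Omega_h)}\le \|w_h\|_{H^1(\Omega_h)}\|\eta_h\|_{H^1(\Omega_h)}\le C\|\nabla w_h\|_{L^2(\Omega_h)}\|\nabla\eta_h\|_{L^2(\Omega_h)}$. The gradient couplings $(\nabla u_h,\nabla\eta_h)$ and $(\nabla w_h,\nabla v_h)$ are bounded by the products of the corresponding gradient norms. The stabilization terms $h^2\sum_T(\Delta u_h,\Delta v_h)_{L^2(T)}$ and $j_h(u_h,v_h)$ are symmetric positive semidefinite, so Cauchy--Schwarz bounds them by the square roots of their diagonal values. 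Those diagonal values are components of $\vertiii{w_h,u_h}_h$ and $\vertiii{\eta_h,v_h}_h$.

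\emph{Boundary terms: the main obstacle.} The difficulty lies in $(\partial_n u_h,\eta_h)_{L^2(\partial\Omega_h)}$ and $(w_h,\partial_n v_h)_{L^2(\partial\Omega_h)}$. We will split the $h$-weights as
\[
|(\partial_n u_h,\eta_h)_{L^2(\partial\Omega_h)}|\le h^{1/2}\|\nabla u_h\|_{L^2(\partial\Omega_h)}\, h^{-1/2}\|\eta_h\|_{L^2(\partial\Omega_h)}.
\]
By Lemma~\ref{poincare1}, $\|\eta_h\|_{L^2(\partial\Omega_h)}^2\le Ch|\eta_h|^2_{H^1(\Omega_h^\Gamma)}$, so the second factor is bounded by $C\|\nabla\eta_h\|_{L^2(\Omega_h)}$.

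For the first factor, we apply the elementwise scaled trace inequality (3.3a) to $\nabla u_h$ on each boundary element:
\[
h\|\nabla u_h\|^2_{L^2(\partial\Omega_h)}\le C\Big(\|\nabla u_h\|^2_{L^2(\Omega_h^\Gamma)}+h^2\sum_{T\in\mathcal T_h^\Gamma}|u_h|^2_{H^2(T)}\Big).
\]
Since $u_h$ is an elementwise polynomial of uniformly bounded degree (the degree of $\phi_h$ plus $k$), the inverse inequality $|u_h|_{H^2(T)}\le Ch^{-1}|u_h|_{H^1(T)}$ applies. Hence $h^{1/2}\|\nabla u_h\|_{L^2(\partial\Omega_h)}\le C\|\nabla u_h\|_{L^2(\Omega_h)}$. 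This is in fact a standard discrete inverse trace estimate.

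The term $(w_h,\partial_n v_h)$ is treated identically, with $w_h$ and $v_h$ in the roles of $\eta_h$ and $u_h$. Note that we do not need the $\Delta$- or jump-penalty parts of the norm here. The inverse trace estimate already suffices, because continuity, unlike coercivity, does not require sign control.

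\emph{Conclusion.} Summing the seven bounds and applying the discrete Cauchy--Schwarz inequality yields \eqref{2.11}. We expect the only delicate point to be checking that the inverse estimates hold with $h$-independent constants on the full elements of $\mathcal T_h^\Gamma$. This holds because we integrate over full background elements of a quasi-uniform mesh, never over cut pieces.
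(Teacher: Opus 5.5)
Your argument is correct, but it handles the two boundary terms differently from the paper's proof of this lemma.

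The paper uses that $\eta_h$ vanishes on $\{\phi_h=0\}$ and applies the divergence theorem on the strip $B_h$. This rewrites $(\partial_n u_h,\eta_h)_{L^2(\partial\Omega_h)}$ as $(\nabla u_h,\nabla\eta_h)_{L^2(B_h)}+\sum_{T\in\mathcal T_h^\Gamma}(\Delta u_h,\eta_h)_{L^2(B_h\cap T)}$ plus facet-jump contributions. These pieces are then controlled by the $h^2\|\Delta u_h\|_{L^2(T)}^2$ and $j_h$ parts of $\vertiii{w_h,u_h}_h$, together with Lemma~\ref{poincare1}.

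You instead split the weights as $h^{1/2}\cdot h^{-1/2}$ and use the discrete inverse trace bound $h^{1/2}\|\nabla u_h\|_{L^2(\partial\Omega_h)}\le C\|\nabla u_h\|_{L^2(\Omega_h^\Gamma)}$. This is legitimate: each facet of $\partial\Omega_h$ is a facet of a full element of $\mathcal T_h^\Gamma$, and $u_h|_T$ is a polynomial of uniformly bounded degree. It is also the same device the paper itself uses later for the continuity of the Stokes form $A_h$.

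Your route is shorter and needs no jump bookkeeping. It avoids integrals over cut pieces $B_h\cap T$, even purely analytical ones. It also gives the sharper bound $C\|\nabla u_h\|_{L^2(\Omega_h)}\|\nabla\eta_h\|_{L^2(\Omega_h)}$ without using the penalty parts of the norm.

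The divergence-theorem identity on $B_h$ is what is genuinely needed for coercivity (Lemma~\ref{poissonlemma} and the argument of Lemma~\ref{Stokes:inf-sup}). There the $O(1)$ constant in the inverse trace estimate could not be absorbed. Reusing that identity here keeps the continuity and coercivity analyses parallel. As you correctly note, however, continuity requires no such sign or constant control.
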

\begin{proof}
We begin by estimating the terms on $\partial\Omega_h$. 
Note that both $w_h$ and $\eta_h$ vanish on the zero level set $\{\phi_h = 0\}$.
Applying the divergence theorem on the narrow band region $B_h$, defined as the strip bounded by $\partial\Omega_h$ and $\{\phi_h = 0\}$, we obtain
\begin{align*}
(\partial_n u_h, \eta_h)_{L^2(\partial\Omega_h)} 
&=
(\partial_n u_h, \eta_h)_{L^2(\partial B_h)}
= (\nabla u_h, \nabla \eta_h)_{L^2(B_h)}
+ \sum_{T \in \mathcal{T}_h^\Gamma}
(\Delta u_h, \eta_h)_{L^2(B_h \cap T)}
+ \mathcal{E}_1,
\end{align*}
where $\mathcal{E}_1$ collects the edge jump contributions across element edges.
Using the trace and Poincar\'e inequalities \eqref{eq:poincare-inequ}--\eqref{eq:trace-inequ}, the term $\mathcal{E}_1$ can be estimated as
\begin{align*}
|\mathcal{E}_1|
&\le \sum_{E \in \mathcal{F}_h^\Gamma} \big|([\partial_n u_h]_E, \eta_h)_{L^2(E)}\big|
\le C\, j_h(u_h,u_h)^{1/2} \, |\eta_h|_{H^1(\Omega_h)}.
\end{align*}
Thus, we derive
\begin{align*}
|(\partial_n u_h,\eta_h)_{L^2(\partial\Omega_h)}| &\le C \vertiii{w_h,u_h}_h |\eta_h|_{H^1(\Omega_h)},\\
|(w_h,\partial_n v_h)_{L^2(\partial\Omega_h)}|&\le 
C \vertiii{\eta_h,v_h}_h
|w_h|_{H^1(\Omega_h)}.
\end{align*}
The remaining terms in $a_h$ are clearly continuous with respect to the norm $\vertiii{\cdot}_h$. 
This completes the proof of the continuity of $a_h$.
\hfill\end{proof}
\begin{lemma}[Generalized inf-sup condition]\label{lem:inf-sup}
The bilinear form $a_h$ satisfies the following property: 
there exists a constant $\tilde{\beta}>0$ such that, 
for any $(w_h,u_h)\in S_h\times S_h$, 
there exists $(\eta_h,v_h)\in S_h\times S_h$ such that
\begin{equation}
a_h(w_h,u_h;\eta_h,v_h)\ge \tilde{\beta}\vertiii{w_h,u_h}_h\vertiii{\eta_h,v_h}_h.
\end{equation}
\end{lemma}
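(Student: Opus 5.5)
The plan is to build a test pair from $(w_h,u_h)$ itself, in the spirit of a Babu\v{s}ka-type argument. Take
\[
\eta_h := w_h+\delta u_h,\qquad v_h := u_h,
\]
with a small parameter $\delta>0$ to be fixed, independent of $h$. The point is that the skew-symmetric coupling in $a_h$ cancels when $\eta_h=w_h$ and $v_h=u_h$. The extra $\delta u_h$ in $\eta_h$ then produces the missing control of $\|\nabla u_h\|_{L^2(\Omega_h)}$ through the coercivity of Lemma~\ref{poissonlemma}.

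\emph{Step 1 (cancellation).} Compute $a_h(w_h,u_h;w_h,u_h)$. The terms $(\nabla u_h,\nabla w_h)-(\partial_n u_h,w_h)_{L^2(\partial\Omega_h)}$ cancel exactly against $-(\nabla w_h,\nabla u_h)+(w_h,\partial_n u_h)_{L^2(\partial\Omega_h)}$. This leaves
\[
a_h(w_h,u_h;w_h,u_h)=\|w_h\|_{H^1(\Omega_h)}^2+D(u_h)+j_h(u_h,u_h),
\qquad D(u_h):=h^2\sum_{T\in\mathcal{T}_h^\Gamma}\|\Delta u_h\|_{L^2(T)}^2 .
\]

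\emph{Step 2 (the extra $\delta u_h$).} By linearity in $\eta_h$,
\[
a_h(w_h,u_h;\delta u_h,0)=\delta (w_h,u_h)_{H^1(\Omega_h)}+\delta\big(\|\nabla u_h\|^2_{L^2(\Omega_h)}-(\partial_n u_h,u_h)_{L^2(\partial\Omega_h)}\big).
\]
Lemma~\ref{poissonlemma}, with its fixed large $\sigma$, gives
\[
\|\nabla u_h\|^2_{L^2(\Omega_h)}-(\partial_n u_h,u_h)_{L^2(\partial\Omega_h)}\ge C_{\rm coer}\big(\|\nabla u_h\|^2_{L^2(\Omega_h)}+D(u_h)+j_h(u_h,u_h)\big)-\sigma\big(D(u_h)+j_h(u_h,u_h)\big).
\]
The cross term is handled with Lemma~\ref{poincare2} (applied to both $u_h$ and $w_h$) and Young's inequality:
\[
\delta|(w_h,u_h)_{H^1(\Omega_h)}|\le C\delta\|w_h\|_{H^1(\Omega_h)}\|\nabla u_h\|_{L^2(\Omega_h)}\le \tfrac12\|w_h\|^2_{H^1(\Omega_h)}+\tfrac{C^2\delta^2}{2}\|\nabla u_h\|^2_{L^2(\Omega_h)} .
\]

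\emph{Step 3 (choice of $\delta$).} Adding Steps 1 and 2 gives
\[
a_h(w_h,u_h;\eta_h,v_h)\ge \tfrac12\|w_h\|^2_{H^1(\Omega_h)}+(1-\delta\sigma)\big(D(u_h)+j_h(u_h,u_h)\big)+\delta\big(C_{\rm coer}-\tfrac{C^2\delta}{2}\big)\|\nabla u_h\|^2_{L^2(\Omega_h)} .
\]
Choose $\delta\le\min\{1/(2\sigma),\,C_{\rm coer}/C^2\}$. Since $\|w_h\|_{H^1(\Omega_h)}\ge\|\nabla w_h\|_{L^2(\Omega_h)}$, the right-hand side is bounded below by $c\,\vertiii{w_h,u_h}_h^2$ with $c>0$ independent of $h$.

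\emph{Step 4 (bounding the test pair).} Finally,
\[
\vertiii{\eta_h,v_h}_h^2=\|\nabla(w_h+\delta u_h)\|^2_{L^2(\Omega_h)}+\|\nabla u_h\|^2_{L^2(\Omega_h)}+D(u_h)+j_h(u_h,u_h)\le C\vertiii{w_h,u_h}_h^2 .
\]
Hence $a_h\ge c\,\vertiii{w_h,u_h}_h^2\ge \tilde\beta\,\vertiii{w_h,u_h}_h\vertiii{\eta_h,v_h}_h$.

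The only delicate point is Step 2. Lemma~\ref{poissonlemma} requires a large weight $\sigma$ on $D(u_h)$ and $j_h$, but the scheme carries only unit weight on these terms. This is resolved by taking $\delta$ small enough that $\delta\sigma\le 1/2$: the unit-weight terms from Step 1 then absorb the $-\delta\sigma(D(u_h)+j_h(u_h,u_h))$ deficit. That is exactly why no tuned penalty parameter is needed in the scheme itself.
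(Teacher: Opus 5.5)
Your proposal is correct and follows essentially the same argument as the paper. Both use the test pair $(w_h+\alpha u_h,u_h)$, the exact cancellation of the skew coupling terms, and Lemma~\ref{poissonlemma} with $\alpha\sigma<1$, so that the unit-weight stabilization absorbs the $\sigma$-deficit; the cross term is then handled with Poincar\'e and Young, and the test-pair norm is bounded (directly in your Step~4, via \eqref{prop1} in the paper).
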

\begin{proof}
Testing \eqref{2.10} with $(\eta_h,v_h)=(w_h+\alpha u_h,u_h)$, 
where $\alpha>0$ is a constant to be specified later, we obtain
\begin{align}\notag\label{c030}
&a_h(w_h,u_h; w_h+\alpha u_h,u_h)=
(w_h,w_h)_{H^1(\Omega_h)}
+h^2\sum_{T\in\mathcal{T}_h^\Gamma}(\Delta u_h,\Delta u_h)_{L^2(T)}+j_h(u_h,u_h) \\
&\hspace{2.4cm}+\alpha(w_h, u_h)_{H^1(\Omega_h)} + \alpha(\nabla u_h,\nabla u_h)_{L^2(\Omega_h)}-\alpha(\partial_n u_h,u_h)_{L^2(\partial\Omega_h)}.
\end{align}
By requiring $\alpha<1/\sigma$ and substituting the coercivity estimate \eqref{c031} into \eqref{c030}, we obtain
\begin{align}\label{c032}\notag
a_h(w_h&,u_h;w_h+\alpha u_h,u_h)\ge
(w_h,w_h)_{H^1(\Omega_h)}+\alpha(w_h,u_h)_{H^1(\Omega_h)} \\
&+C_{\mathrm{coer}}\alpha 
\Big((\nabla u_h,\nabla u_h)_{L^2(\Omega_h)}
+h^2\sum_{T\in \mathcal{T}_h^\Gamma}(\Delta u_h,\Delta u_h)_{L^2(T)} + j_h(u_h,u_h)\Big).
\end{align}
By invoking the Poincar\'e inequality (Lemma~\ref{poincare2}), we obtain
\begin{equation}\label{c033}
\begin{split}
(w_h,u_h)_{H^1(\Omega_h)}&\ge -C_{p}\|\nabla w_h\|_{L^2(\Omega_h)}\|\nabla u_h\|_{L^2(\Omega_h)}.
\end{split}
\end{equation}
Using Young's inequality to estimate the cross term, we have
\begin{align}\label{c0341}\notag
a_h(w_h,u_h;w_h+\alpha u_h,u_h)
\ge& \Big(C_{\mathrm{coer}}\alpha-\frac{C_{p}^2\alpha^2}{2}\Big)\|\nabla u_h\|^2_{L^2(\Omega_h)}+\frac{1}{2}\|\nabla w_h\|^2_{L^2(\Omega_h)}\\\notag
&+C_{\mathrm{coer}}\alpha\Big(h^2\sum_{T\in\mathcal{T}_h^\Gamma}(\Delta u_h,\Delta u_h)_{L^2(T)}
+j_h(u_h,u_h)\Big)\\
\ge &\beta\vertiii{w_h,u_h}_h^2,
\end{align}
provided that 
$\alpha<\min\big\{2C_{\mathrm{coer}}C_{p}^{-2},{\sigma}^{-1}\big\}$.
By \eqref{prop1}, there exists $\tilde{\beta}>0$, independent of $h$ such that the following estimate holds
\begin{equation}\label{c036}
a_h(w_h,u_h;w_h+\alpha u_h,u_h)\ge\tilde{\beta}\vertiii{w_h,u_h}_h\vertiii{w_h+\alpha u_h,u_h}_h.
\end{equation}   
\hfill\end{proof}

The stability of the scheme \eqref{2.10} then follows from the Babu\v{s}ka--Lax--Milgram theorem as a consequence of the generalized inf--sup condition \eqref{c036} (the non-degeneracy (transpose) condition is automatically satisfied since injectivity implies bijectivity in a finite dimensional space, similar for the Stokes proof later).
\begin{myremark}\upshape
In the original $\phi$-FEM scheme, stability is ensured by choosing the penalty parameter $\sigma$ sufficiently large; see Lemma~\ref{poissonlemma}. 
In contrast, our scheme does not require such a condition. 
Although in the above analysis we introduce a parameter $\alpha>0$ and require it to be sufficiently small, 
this parameter is used solely for the theoretical argument and does not appear in the numerical scheme itself. 
As a result, the proposed scheme is parameter-free.
\end{myremark}

\subsubsection{Consistency and Error Estimate}

Recall the interpolation operator $\tilde I_h$ from Lemma~\ref{interpolation}. 
For $\tilde u$ in Theorem~\ref{thm2.1}, we have $\tilde I_h \tilde u \in S_h$, and it satisfies:
\begin{align}\label{defect1}\notag
&(\nabla\tilde{I}_h\tilde{u},\nabla\eta_h)_{L^2(\Omega_h)}
-(\partial_n\tilde{I}_h\tilde{u},\eta_h)_{L^2(\partial\Omega_h)}
=(\nabla (\tilde{I}_h\tilde{u}-\tilde{u}),\nabla\eta_h)_{L^2(\Omega_h)}\\
&\hspace{5cm}+(-\Delta\tilde{u},\eta_h)_{L^2(\Omega_h)}+(\partial_n (\tilde u-\tilde{I}_h\tilde{u}),\eta_h)_{L^2(\partial\Omega_h)},\\
&(\Delta\tilde{I}_h\tilde{u},\Delta v_h)_{L^2(T)}
=-(\Delta(\tilde{u}-\tilde{I}_h\tilde{u}),
\Delta v_h)_{L^2(T)}
-(-\Delta\tilde{u}, \Delta v_h)_{L^2(T)},\\
\label{defect3}
&j_h(\tilde{I}_h\tilde{u},v_h)= j_h(\tilde{I}_h\tilde{u}-\tilde{u},v_h),
\end{align}
for any $\eta_h,v_h\in S_h$. 
Then, we obtain
\begin{align}\label{def:cons}
a_h(0,\tilde I_h\tilde u; \eta_h,v_h) = r_c(\eta_h,v_h)
+\ell(\eta_h,v_h),
\end{align}
where the consistency error $r_c$ is given by
\begin{align*}
r_c(\eta_h,v_h)=&(\nabla (\tilde{I}_h\tilde{u}-\tilde{u}),\nabla\eta_h)_{L^2(\Omega_h)}
+(\partial_n (\tilde u-\tilde{I}_h\tilde{u}),\eta_h)_{L^2(\partial\Omega_h)}\\
&-(f+\Delta\tilde{u},\eta_h)_{L^2(\Omega_h)}
+j_h(\tilde{I}_h\tilde{u}-\tilde{u},v_h)\\
&+h^2\sum_{T\in\mathcal{T}_h^\Gamma}(f+\Delta\tilde{u},\Delta v_h)_{L^2(T)}
-h^2\sum_{T\in\mathcal{T}_h^\Gamma}(\Delta(\tilde{u}-\tilde{I}_h\tilde{u}),\Delta v_h)_{L^2(T)}.
\end{align*}
Since the auxiliary variable $w_h$ approximates the residual, which vanishes for the exact solution, we consider the error equations satisfied by $(w_h,e_h)$ where $e_h:=u_h-\tilde{I}_h\tilde{u}$. 
Subtracting \eqref{def:cons} from \eqref{2.10}, we obtain
\begin{equation}
a_h(w_h,e_h;\eta_h,v_h)= -r_c(\eta_h,v_h).
\end{equation}

Note that $f+\Delta\tilde{u}$ vanishes in $\Omega$. Therefore, by Lemmas \ref{vanish} and \ref{poincare1}, we can bound the terms involving $f+\Delta\tilde{u}$ in $r_c$ by
\begin{align*}
(f+\Delta\tilde{u},\eta_h)_{L^2(\Omega_h)}&\le\|f+\Delta\tilde{u}\|_{L^2(\Omega_h\backslash\Omega)}\|\eta_h\|_{L^2(\Omega_h\backslash\Omega)}\\
&\le (Ch^{k-1}\|f+\Delta\tilde{u}\|_{H^{k-1}(\Omega_h\backslash\Omega)})(h|\eta_h|_{H^1(\Omega_h^\Gamma)})\\
&\le Ch^k\|f\|_{H^{k-1}(\Omega_h\cup\Omega)}|\eta_h|_{H^1(\Omega_h^\Gamma)},
\end{align*}
where we have used the properties of Stein's extension, and the elliptic regularity to obtain
\begin{align*}
\|\tilde{u}\|_{H^{k+1}(\Omega_h)}\le\|\tilde{u}\|_{H^{k+1}(\Omega_h\cup\Omega)}\le C\|u\|_{H^{k+1}(\Omega)}\le C\|f\|_{H^{k-1}(\Omega)}\le C\|f\|_{H^{k-1}(\Omega_h\cup\Omega)}.
\end{align*}
Similarly, we can derive
\begin{align*}
h^2\sum_{T\in\mathcal{T}_h^\Gamma}(f+\Delta\tilde{u},\Delta v_h)_{L^2(T)}
&\le h^2\|f+\Delta\tilde{u}\|_{L^2(\Omega_h\backslash\Omega)}
\Big(\sum_{T\in\mathcal{T}_h^\Gamma}\|\Delta v_h\|_{L^2(T)}^2\Big)^{\frac12}\\
&\le Ch^2(h^{k-1}\|f+\Delta\tilde{u}\|_{H^{k-1}(\Omega_h\backslash\Omega)})(h^{-1}|v_h|_{H^1(\Omega_h^\Gamma)})\\&\le Ch^k\|f\|_{H^{k-1}(\Omega_h\cup\Omega)}|v_h|_{H^1(\Omega_h^\Gamma)}.
\end{align*}
By the boundary interpolation estimate (3.9a) and the discrete
trace inequality (3.3), we have
\[
\begin{aligned}
&
\left|
\left(
\partial_n(\widetilde u-\widetilde I_h\widetilde u),
\eta_h
\right)_{L^2(\partial\Omega_h)}
\right|                                                       \\
&\quad\leq
\|\nabla(\widetilde u-\widetilde I_h\widetilde u)\|_
 {L^2(\partial\Omega_h)}
\|\eta_h\|_{L^2(\partial\Omega_h)}                            \\
&\quad\leq
C\bigl(h^k+\|\phi-\phi_h\|_{H^1(\Omega_h)}\bigr)
\left(
\|\widetilde u\|_{H^{k+1}(\Omega_h)}
+\|\widetilde u\|_{H^3(\Omega_h)}
\right)
|\eta_h|_{H^1(\Omega_h)}.
\end{aligned}
\]
The remaining terms in $r_c$ can be treated analogously. Therefore, we have
\begin{equation*}
|r_c(\eta_h,v_h)|\le C(h^k+\|\phi-\phi_h\|_{H^1(\Omega_h)})\vertiii{\eta_h,v_h}_h
(\|f\|_{H^{k-1}(\Omega_h\cup \Omega)}+\|f\|_{H^{1}(\Omega_h\cup \Omega)}).
\end{equation*}
From the generalized inf-sup condition in Lemma~\ref{lem:inf-sup}, there exists $(\eta_h,v_h)\in S_h\times S_h$ such that
\begin{align*}
\vertiii{w_h,e_h}_h&\le \frac{1}{\tilde{\beta}}\frac{|a_h(w_h,e_h;\eta_h,v_h)|}
{\vertiii{\eta_h,v_h}_h}
=\frac{1}{\tilde{\beta}}\frac{
|r_c(\eta_h,v_h)|}
{\vertiii{\eta_h,v_h}_h}\\
&\le C(h^k+\|\phi-\phi_h\|_{H^1(\Omega_h)})(\|f\|_{H^{k-1}(\Omega_h\cup \Omega)}+\|f\|_{H^{1}(\Omega_h\cup \Omega)}).
\end{align*}
Combining with the interpolation estimate in Lemma \ref{interpolation}, 
we deduce the main results in Theorem \ref{thm2.1}:
\begin{align}\notag
\|\tilde{u}-u_h&\|_{H^1(\Omega_h)}
\le\|\tilde{u}-\tilde{I}_h\tilde{u}\|_{H^1(\Omega_h)}+\|e_h\|_{H^1(\Omega_h)}\\
&\le C(h^k+\|\phi-\phi_h\|_{H^1(\Omega_h)})(\|f\|_{H^{k-1}(\Omega_h\cup \Omega)}
+ \|f\|_{H^{ 1}(\Omega_h\cup \Omega)}).
\end{align}

\subsection{Proof of Theorem~\ref{thm:stokes}}
In line with the previous sections, we begin by recalling several auxiliary lemmas, in particular the Korn inequality and the inf--sup condition, which will be used in the analysis of the least-squares UnCut FEM for the Stokes problem.
We cite these results with the original numbering from the corresponding references and therefore omit the proofs.
\begin{lemma}[{\!\!\cite[Lemma 3]{phifem_stokes}}]\label{korn}
For all $v_h\in X_h$, it holds that
\begin{equation}\label{eq:korn}
\|\nabla v_h\|_{L^2(\Omega_h)}\le C\|Dv_h\|_{L^2(\Omega_h)}.
\end{equation}
\end{lemma}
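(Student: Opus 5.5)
The plan is to split $\Omega_h$ into the interior region $\Omega_h^-:=\{\phi_h<0\}\cap\Omega_h$ and the cut zone $\Omega_h^\Gamma$. On $\Omega_h^-$ I use that $v_h$ vanishes on its boundary. On the cut elements I control $\nabla v_h$ by a patch argument that transports information along the paths of Assumption~\ref{a3} to interior elements.

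\textbf{Step 1: the interior region.} Every $v_h\in X_h$ has the form $\phi_h z_h$ componentwise, so $v_h=0$ on $\Gamma_h=\{\phi_h=0\}$. Each component is continuous and piecewise polynomial, hence in $H^1$. The boundary of $\{\phi_h<0\}$ is exactly $\Gamma_h$, so the restriction of $v_h$ to $\Omega_h^-$ lies in $H^1_0(\Omega_h^-)^d$. For such functions, integrating by parts twice (first for smooth compactly supported fields, then by density) gives the classical identity
\[
2\|Dv_h\|_{L^2(\Omega_h^-)}^2=\|\nabla v_h\|_{L^2(\Omega_h^-)}^2+\|\nabla\cdot v_h\|_{L^2(\Omega_h^-)}^2 .
\]
This yields $\|\nabla v_h\|_{L^2(\Omega_h^-)}\le \sqrt2\,\|Dv_h\|_{L^2(\Omega_h)}$ with no geometric constant.

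\textbf{Step 2: a local estimate on cut elements.} Fix $T\in\mathcal T_h^\Gamma$. By Assumption~\ref{a3} there is a chain of face-adjacent elements $\omega_T=T_0\cup T_1\cup\dots\cup T_m$ with $T_0=T$, $T_m=T'\in\mathcal T_h\setminus\mathcal T_h^\Gamma$ and $m\le N+1$. The restriction $v_h|_{\omega_T}$ is continuous and piecewise polynomial of uniformly bounded degree, namely $\deg\phi_h+k$. I claim
\[
\|\nabla v_h\|_{L^2(\omega_T)}\le C\big(\|Dv_h\|_{L^2(\omega_T)}+\|\nabla v_h\|_{L^2(T')}\big).
\]
To prove the claim, map $\omega_T$ affinely to a reference patch. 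Both sides scale identically under dilation, so it suffices to prove the inequality on reference patches. By shape regularity and the bound on $m$, these reference patches range over a compact family of configurations. The right-hand side is a norm on the quotient of the finite-dimensional space of continuous piecewise polynomials modulo constants. Indeed, if $Dv=0$ on every element, then $v$ is a rigid motion $a_i+B_ix$ on each $T_i$ with $B_i$ skew. Continuity across a shared facet of dimension $d-1$ forces neighbouring rigid motions to coincide, so a single rigid motion holds on all of $\omega_T$. Then $\nabla v=0$ on $T'$ gives $B=0$, hence $\nabla v\equiv0$. Equivalence of norms on a finite-dimensional space, together with compactness of the configuration set, gives a uniform constant.

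\textbf{Step 3: summation.} Every interior element $T'$ satisfies $T'\subset\overline{\Omega_h^-}$, since $\Gamma_h$ does not meet $T'$ and $T'$ intersects $\{\phi_h<0\}$. Summing the local estimate over $T\in\mathcal T_h^\Gamma$ requires that the patches $\omega_T$ overlap only a bounded number of times. This holds because each patch lies in a ball of radius $(N+2)h$ and the mesh is quasi-uniform. Summing and applying Step~1 to the $T'$ terms gives
\[
\|\nabla v_h\|_{L^2(\Omega_h^\Gamma)}^2\le C\|Dv_h\|_{L^2(\Omega_h)}^2+C\|\nabla v_h\|_{L^2(\Omega_h^-)}^2\le C\|Dv_h\|_{L^2(\Omega_h)}^2 .
\]
Since $\Omega_h\subset\overline{\Omega_h^-}\cup\overline{\Omega_h^\Gamma}$, adding the two bounds proves \eqref{eq:korn}.

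\textbf{Main obstacle.} The delicate point is Step~2, specifically the uniformity of the constant. The patch geometries must form a compact family, and the polynomial degree must be bounded. Working with the full space of continuous piecewise polynomials on $\omega_T$, rather than with the products $\phi_h z_h$, makes the uniformity argument independent of how $\Gamma_h$ cuts $T$.
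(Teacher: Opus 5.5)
The paper gives no proof of this lemma; it simply imports it from \cite[Lemma 3]{phifem_stokes}. Your argument is a correct, self-contained proof and is the genuinely different content here.

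\textbf{Checking the steps.} In Step 1 you need only $\partial\{\phi_h<0\}\subset\Gamma_h$, not equality. That inclusion, together with continuity, puts $v_h|_{\{\phi_h<0\}}$ in $H^1_0$ and gives the identity with constant $\sqrt2$. The kernel computation in Step 2 is right: a skew matrix annihilating a $(d-1)$-dimensional subspace has rank at most one, hence is zero. The bounded-overlap summation in Step 3 is also fine.

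\textbf{What your route buys.} The citation is short, but your proof makes the hypotheses visible. The lemma is stated in the paper with no hypotheses, and the paper's remark on where Assumption~\ref{a3} is used does not mention it. Your argument shows that the estimate rests on Assumption~\ref{a3} together with shape regularity and quasi-uniformity. It uses nothing about $\phi_h$ beyond continuity and bounded polynomial degree: no lower bound on $|\nabla\phi_h|$ (Assumption~\ref{a2}) and no Hardy inequality. Consequently the constant is visibly independent of how $\Gamma_h$ cuts the elements.

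\textbf{Two points to make explicit.}
\begin{enumerate}
\item You must read the path in Assumption~\ref{a3} as a chain whose consecutive elements share a $(d-1)$-facet, as in the original $\phi$-FEM patch assumption. If neighbours share only an edge or a vertex (for instance two tetrahedra meeting along an edge), relative rotations about the shared edge lie in the kernel of $\|Dv\|_{L^2(\omega_T)}+\|\nabla v\|_{L^2(T')}$, and Step 2 fails.
\item In Step 2, normalise patches only by similarities (translation, rotation, dilation), since $Dv$ is not invariant under general affine maps. Shape variation is then absorbed by compactness of the configuration set. This is what your argument actually does, but the phrase ``map $\omega_T$ affinely to a reference patch'' could be misread as mapping every patch to one fixed reference patch.
\end{enumerate}
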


\begin{lemma}\label{phifem_stokes}
Assume that Assumptions \ref{a1}-\ref{a3} hold. 
Then, for any $\beta>0$, there exists a constant $0<\gamma<1$ such that for all $v_h\in X_h$ and $q_h\in Q_h$,
\begin{align}\label{c080}\notag
\|Dv_h\|_{L^2(\Omega_h^\Gamma)}^2
+(1-\gamma)h^2|q_h|_{H^1(\Omega_h^\Gamma)}^2
\le{}& \gamma\|Dv_h\|_{L^2(\Omega_h)}^2
+ \beta h^2\sum_{T\in\mathcal{T}_h^\Gamma}
\|\!-\!2\operatorname{div} D v_h+\nabla q_h\|_{L^2(T)}^2 \\
&+\beta\big(\|\nabla\cdot v_h\|_{L^2(\Omega_h^\Gamma)}^2 + j_h^{(2)}(v_h,v_h)\big).
\end{align}
\end{lemma}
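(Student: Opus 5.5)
The plan is to transfer the argument behind Lemma~\ref{poissonlemma} to the Stokes setting: propagate control from interior (uncut) elements into the cut zone along the chains of Assumption~\ref{a3}, using jump penalties to link neighbours and the residual term together with the divergence to handle the pressure. The argument is by contradiction and compactness on reference patches, organised as follows.

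First, a local patch estimate. For each $T\in\mathcal{T}_h^\Gamma$, Assumption~\ref{a3} gives a chain $T=T_0,T_1,\dots,T_m=T'$ with $m\le N$ and $T'$ interior. Let $\omega_T$ denote the union of this chain. I claim that, with $E_i$ the shared facets,
\begin{equation*}
\|Dv_h\|_{L^2(T)}^2+h^2|q_h|_{H^1(T)}^2\le C\Big(\|Dv_h\|_{L^2(T')}^2+h^2\sum_{i<m}\|-2\operatorname{div}Dv_h+\nabla q_h\|_{L^2(T_i)}^2+\|\nabla\cdot v_h\|_{L^2(\omega_T)}^2+\sum_i \big(h\|[\partial_n v_h]\|_{E_i}^2+h^3\|[\partial_n^2 v_h]\|_{E_i}^2\big)\Big).
\end{equation*}
By scaling $\omega_T$ to a reference configuration, the right-hand side becomes a seminorm on the finite-dimensional space of piecewise polynomials of bounded degree (this uses that $\phi_h$ has bounded degree). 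There are only finitely many patch shapes up to shape regularity, so a compactness argument reduces the claim to showing the kernel is trivial on the left-hand side.

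The kernel check goes as follows. Vanishing jumps of $\partial_n v_h$ and $\partial_n^2 v_h$, combined with $C^0$ continuity, force $v_h$ to be a single polynomial of degree $\le 2$ across each facet. Since $v_h$ has degree up to $k+\deg\phi_h$, this is why both jump terms in $j_h^{(2)}$ are needed; propagating from $T'$, where $Dv_h=0$, I would conclude that $v_h$ is a rigid motion on $\omega_T$, hence $Dv_h=0$ there. The residual term then gives $\nabla q_h=2\operatorname{div}Dv_h=0$.

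The main obstacle is that the jumps only control second derivatives, while $v_h$ may have higher degree. Two routes are available. The first is to weight by $\gamma$ as in the statement: the left side is bounded by $\gamma\|Dv_h\|^2$ plus the others, with $\gamma<1$ arising from a strict comparison between a cut element and its interior neighbour. The second is the standard $\phi$-FEM trick of \cite{phifem_stokes}, which proves the bound with $\|Dv_h\|_{L^2(\Omega_h\setminus\Omega_h^\Gamma)}^2$ on the right. Along the second route, I would write $\|Dv_h\|_{L^2(\Omega_h)}^2=\|Dv_h\|_{L^2(\Omega_h^\Gamma)}^2+\|Dv_h\|_{L^2(\Omega_h\setminus\Omega_h^\Gamma)}^2$ and proceed in two steps.

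1. Summing the patch estimates, with bounded overlap, gives
\[
X:=\|Dv_h\|_{\Omega_h^\Gamma}^2+h^2|q_h|_{H^1(\Omega_h^\Gamma)}^2\le C_1\big(\|Dv_h\|_{\Omega_h\setminus\Omega_h^\Gamma}^2+R\big),
\]
where $R$ collects the residual, divergence and jump terms.

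2. Take a convex combination: writing $X=(1-\gamma)X+\gamma X$ and using $C_1$ to absorb, one chooses $\gamma=C_1/(1+C_1)$. This yields $X\le\gamma\|Dv_h\|_{L^2(\Omega_h)}^2+\beta R$ up to rescaling. To obtain the estimate for arbitrary $\beta>0$, I would recover smallness of the coefficient of $R$ by taking $\gamma$ closer to $1$. Specifically, multiply the patch estimate by $\epsilon$ and add, which gives $X\le\frac{\epsilon C_1}{1+\epsilon C_1}\|Dv_h\|^2+\frac{\epsilon C_1}{1+\epsilon C_1}X+\dots$.

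Rearranging step 2 carefully to isolate $(1-\gamma)h^2|q_h|^2$ on the left is routine bookkeeping, though the dependence of $\gamma$ on $\beta$ must be tracked.
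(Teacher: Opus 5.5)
\textbf{Step 1 has a genuine gap.} Your reduction in Step 2 is fine, and simpler than you make it: once you have $X\le C_1\bigl(\|Dv_h\|_{L^2(\Omega_h\setminus\Omega_h^\Gamma)}^2+R\bigr)$, any $\gamma$ with $1-\gamma\le\min\{1/(1+C_1),\beta/C_1\}$ gives the lemma. The problem is the kernel check on which your compactness argument rests. Vanishing of $[v_h]$, $[\partial_n v_h]$ and $[\partial_n^2 v_h]$ on a facet $E$ only says that the two adjacent polynomials differ by $\ell_E^3 s$, where $\ell_E$ is the affine function vanishing on $E$ and $s$ is some polynomial. Since $v_h\in X_h$ has elementwise degree up to $k+\deg\phi_h\ge 3$, this does not glue $v_h$ into a single polynomial, so ``$v_h$ is a rigid motion on $\omega_T$'' does not follow. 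You flag this obstacle yourself, but neither route resolves it. Route~2 just sums the unproved patch estimates, and Route~1 supplies no mechanism. Invoking the residual only after $Dv_h=0$ is known, to get $\nabla q_h=0$, is too late: jumps alone cannot propagate the rigid motion.

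\textbf{The missing idea is unique continuation for the Stokes system.} The residual and the divergence must be used as a PDE on each cut element of the chain. In the kernel, $Dv_h=0$ on $T'$ makes $v_h$ a rigid motion $r$ there. On the adjacent cut element put $w=v_h-r$. Then $\nabla\cdot w=0$, and by $2\operatorname{div}Dv=\Delta v+\nabla(\nabla\cdot v)$ the vanishing residual reads $-\Delta w+\nabla q_h=0$. The jump conditions give $w=\ell_E^3 s$, hence $\Delta w=0$ on $E$, so $\nabla q_h=0$ on $E$. Since $\Delta q_h=\nabla\cdot\Delta w=0$, the harmonic polynomial $q_h-c$ (with $c$ the constant value of $q_h$ on $E$) has zero Cauchy data on $E$ and vanishes. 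Then $w$ is harmonic with $w=\partial_n w=0$ on $E$, so $w=0$. Induction along the chain gives $Dv_h=0$ and $\nabla q_h=0$ on $T$.

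This also shows the true role of the $h^3$ jump: it pins down the pressure trace, not the polynomial degree. If that term were dropped, the pair $v=0$ on $\{x_2<0\}$, $v=(-3x_2^2,0)$ on $\{x_2>0\}$, $q=-6x_1$ would annihilate every remaining term on the right of your patch estimate while $Dv\neq0$ on $\{x_2>0\}$.

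The paper avoids all of this by quoting \cite[Lemma~2]{phifem_stokes}, which is stated with $-\Delta v_h+\nabla q_h$. It then passes to $-2\operatorname{div}Dv_h+\nabla q_h$ through the same identity and the inverse inequality $h\|\nabla(\nabla\cdot v_h)\|_{L^2(T)}\le C\|\nabla\cdot v_h\|_{L^2(T)}$, absorbing the constant into $\beta$.
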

\begin{proof}
According to \cite[Lemma~2]{phifem_stokes}, we have
\begin{equation*}
\begin{split}
\|D v_h\|_{L^2(\Omega_h^\Gamma)}^2
&+(1-\gamma)h^2 |q_h|_{H^1(\Omega_h^\Gamma)}^2
\le \gamma \|D v_h\|_{L^2(\Omega_h)}^2 \\
&\quad + \beta\Big(
h^2 \sum_{T\in\mathcal{T}_h^\Gamma}
\|\!-\!\Delta v_h + \nabla q_h\|_{L^2(T)}^2
+ \|\nabla\cdot v_h\|_{L^2(\Omega_h^\Gamma)}^2
+ j_h^{(2)}(v_h,v_h)
\Big).
\end{split}
\end{equation*}
Using the identity
\begin{align*}
2\operatorname{div} D v_h = \Delta v_h + \nabla \operatorname{div} v_h,
\end{align*}
we obtain
\begin{align*}
\sum_{T\in\mathcal{T}_h^\Gamma}
\|\!-\!\Delta v_h + \nabla q_h\|_{L^2(T)}^2\lesssim\sum_{T\in\mathcal{T}_h^\Gamma}
\|\!-\!2\operatorname{div} D v_h + \nabla q_h\|_{L^2(T)}^2
\;
+ \sum_{T\in\mathcal{T}_h^\Gamma}
\|\nabla \operatorname{div} v_h\|_{L^2(T)}^2 .
\end{align*}
The desired result \eqref{c080} follows from the inverse inequality.
\hfill\end{proof}

\begin{lemma}[{\!\!\cite[Lemma 8]{phifem_stokes}}]\label{oldinfsup}
For any $p_h\in \tilde{Q}_h$, where $\tilde{Q}_h$ denotes the space of continuous piecewise $P^{k-1}$ polynomials on $\mathcal{T}_h$ with zero mean over $\Omega$, there exists $v_h^p\in X_h$ such that
\begin{equation}
\begin{split}
\|p_h\|_{L^2(\Omega_h)}^2-Ch^2|p_h|_{H^1(\Omega_h^\Gamma)}^2&\le\int_{\Omega_h}\nabla p_h\cdot v_h^p\, dx,\\
|v_h^p|_{H^1(\Omega_h)}&\le C\|p_h\|_{L^2(\Omega_h)}.
\end{split}
\end{equation}
Here we modify $Q_h$ defined in~\eqref{defQ_H} by changing the constraint to $\int_\Omega p_h\, dx=0$.
The functions are understood to be naturally extended to $\Omega$ if $\Omega\backslash\Omega_h\not=\emptyset$.
\end{lemma}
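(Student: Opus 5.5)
The plan is a Verfürth-type two-step construction. A continuous inf--sup argument gives a velocity, which is then interpolated into $X_h$. A second, discrete test function recovers the $h$-weighted pressure gradient away from the boundary. What cannot be recovered is the gradient on the cut elements, and that is exactly the term $Ch^2|p_h|^2_{H^1(\Omega_h^\Gamma)}$ left over in the statement.

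\emph{Step 1 (continuous field).} Let $p_h\in\tilde Q_h$, extended naturally to $\Omega\cup\Omega_h$. Since $\int_\Omega p_h\,dx=0$, the continuous inf--sup (Bogovski\u{\i}) result on the smooth domain $\Omega$ gives $v\in H^1_0(\Omega)^d$ with $-\nabla\!\cdot v=p_h$ in $\Omega$ and $|v|_{H^1(\Omega)}\le C\|p_h\|_{L^2(\Omega)}$. Extend $v$ by zero outside $\Omega$. Integrating by parts gives $\int_{\Omega}\nabla p_h\cdot v\,dx=\|p_h\|^2_{L^2(\Omega)}$.

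\emph{Step 2 (interpolate into $X_h$).} Build $v_1\in X_h$ from $v$ with two properties:
\[
|v_1|_{H^1(\Omega_h)}\le C|v|_{H^1},\qquad \|v-v_1\|_{L^2(T)}\le Ch|v|_{H^1(\omega_T)}.
\]
I would try $v_1=\phi_h z_h$, where $z_h$ is a Scott--Zhang/Clément-type quasi-interpolant of $v/\phi_h$. Hardy's inequality (Lemma~\ref{hardy} with $s=0$) gives $v/\phi\in L^2$, and Assumption~\ref{a2} lets me pass from $\phi$ to $\phi_h$ near $\Gamma$. The first-order $L^2$ estimate is then proved elementwise, in the same spirit as the splitting \eqref{eq:split}, distinguishing elements near and far from $\Gamma$.

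With $v_1$ in hand, the interpolation error is handled by Cauchy--Schwarz:
\[
\int_{\Omega_h}\nabla p_h\cdot v_1=\|p_h\|^2_{L^2(\Omega)}-\int_{\Omega_h}\nabla p_h\cdot(v-v_1).
\]
The last integral is bounded by $Ch|p_h|_{H^1(\Omega_h)}\|p_h\|_{L^2}$. Young's inequality then gives
\[
\int_{\Omega_h}\nabla p_h\cdot v_1\ge\tfrac12\|p_h\|^2_{L^2(\Omega)}-C_1h^2|p_h|^2_{H^1(\Omega_h)}.
\]
The missing part $\|p_h\|_{L^2(\Omega_h\setminus\Omega)}$ lives in the thin strip. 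It is controlled by $\|p_h\|_{L^2(\Omega)}+Ch|p_h|_{H^1(\Omega_h^\Gamma)}$ by a strip/Poincaré argument as in Lemma~\ref{vanish}.

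\emph{Step 3 (recover the interior gradient).} Set $v_2:=-h\,\phi_h\nabla p_h$. This lies in $X_h$ because $\nabla p_h$ is a piecewise polynomial of degree $k-2\le k$. Continuity of $\nabla p_h$ is the point to check: if it fails, I would replace $\nabla p_h$ by its nodal-averaged ($C^0$) version, whose difference from $\nabla p_h$ is controlled by gradient jumps and hence by inverse inequalities.

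Since $\phi_h<0$ inside,
\[
\int_{\Omega_h}\nabla p_h\cdot v_2=h\int_{\Omega_h}|\phi_h|\,|\nabla p_h|^2 .
\]
On every non-cut element, Assumption~\ref{a2} and continuity give $|\phi_h|\ge ch$ on a fixed fraction of $T$. Norm equivalence for polynomials then yields $\ge c\,h^2|p_h|^2_{H^1(\Omega_h\setminus\Omega_h^\Gamma)}$. For the size of $v_2$, the bound $|\phi_h|\le C$ and an inverse inequality give
\[
|v_2|_{H^1}\le Ch\bigl(\|\nabla p_h\|+\|\phi_h\|_{W^{1,\infty}}h^{-1}\|\nabla p_h\|\bigr)\le C\|p_h\|_{L^2(\Omega_h)}.
\]

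\emph{Step 4 (combine).} Take $v_h^p=v_1+\mu v_2$ with $\mu$ large enough that $\mu c>C_1$. The interior gradient term is then absorbed, leaving only $-Ch^2|p_h|^2_{H^1(\Omega_h^\Gamma)}$ together with the $H^1$ bound on $v_h^p$.

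The main obstacle is Step~2: constructing an $H^1$-stable quasi-interpolant into the product space $\phi_hS$ with first-order $L^2$ approximation, given only $v\in H^1_0$. Near $\Gamma$ this requires dividing by $\phi_h$ in a Hardy-type inequality with possibly degenerate $\phi_h$ on cut elements. If that route fails, I would first try building $v_1$ only from degrees of freedom on elements with $|\phi_h|\gtrsim h$ and bound the resulting loss on the band by the cut-cell term.
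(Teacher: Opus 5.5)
The paper does not prove this lemma. It is imported from \cite[Lemma~8]{phifem_stokes} with the proof omitted, so your outline has to stand on its own.

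\textbf{Step 2 (fixable).} The Verf\"urth skeleton of Steps~1, 2 and~4 is reasonable, but Step~2 must divide by $\phi$, not $\phi_h$. Near $\Gamma_h\cap\Omega$, where $v\neq0$ in general, $v/\phi_h$ behaves like $v/\mathrm{dist}(x,\Gamma_h)$. It is therefore not even locally integrable, and Assumption~\ref{a2} cannot repair this. Instead take $z=v/\phi$ (in $L^2$ by Hardy), an $L^2$-stable quasi-interpolant $z_h$, and the near/far splitting of Lemma~\ref{interpolation}:
\begin{itemize}
\item far from $\Gamma$, use $\|\phi\nabla z\|_{L^2}\le C\|v\|_{H^1}$;
\item near $\Gamma$, use $|\phi_h|\le Ch$ together with an inverse estimate.
\end{itemize}
Then $\phi_h z_h$ has the two properties you need, in the summed global form, which is all Step~2 uses. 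Likewise, passing from $\int_\Omega$ and $\|p_h\|_{L^2(\Omega)}$ to $\Omega_h$ requires an element-chain argument through cut elements (Assumption~\ref{a3} plus inverse trace estimates). Lemma~\ref{vanish} does not do this, since it concerns functions vanishing in $\Omega$.

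\textbf{Step 3 (genuine gap).} Your bound $|v_2|_{H^1}\le C\|p_h\|_{L^2(\Omega_h)}$ is off by a factor $h^{-1}$. The middle expression is essentially $\|\phi_h\|_{L^\infty}\|\nabla p_h\|$. Since $|\phi_h|=O(1)$ away from $\Gamma$, the inverse inequality only yields $Ch^{-1}\|p_h\|$.

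No global power of $h$ repairs this. Write $v_2=-h^{\alpha}\phi_h\nabla p_h$.
\begin{itemize}
\item For $p_h$ oscillating at the mesh scale, the $H^1$ bound forces $\alpha\ge2$.
\item On a non-cut element at distance $d_T\gtrsim h$ from $\Gamma$, the gain is only of order $h^{\alpha}d_T|p_h|_{H^1(T)}^2$.
\item Next to the cut layer, recovering $h^2|p_h|^2_{H^1(T)}$ therefore forces $\alpha\le1$, and the statement permits no loss outside $\Omega_h^\Gamma$.
\end{itemize}
There is a second problem: $\phi_h\nabla p_h\notin X_h$, because $\nabla p_h$ jumps across facets. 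Your averaging fix is not justified. Inverse estimates bound the averaging error only by $C\|\nabla p_h\|$, with no small factor. In fact, on a symmetric structured mesh, vertex averaging annihilates the gradient of a sawtooth $p_h$. Positivity of a continuous surrogate therefore needs the classical Taylor--Hood ingredient, namely continuity of tangential derivatives across facets.

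\textbf{The missing idea.} You need a field $\phi_h w_h$ in which $\phi_h$ is divided out at the degrees of freedom. For $k=2$, for example, take
\[
w_h=\sum_e h^2\,(\partial_{t_e}p_h)\,t_e\,\psi_e/\phi_h(m_e),
\]
with edge bubbles $\psi_e$ and the sum over edges $e$ of non-cut elements whose midpoints satisfy $|\phi_h(m_e)|\ge c_1h$. You then need:
\begin{itemize}
\item a geometric argument, via Assumption~\ref{a2}, that every non-cut element has $d$ such edges with independent tangents;
\item absorption of the indefinite contributions on cut elements into $Ch^2|p_h|^2_{H^1(\Omega_h^\Gamma)}$.
\end{itemize}
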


We note that the inf-sup condition above is stated for piecewise polynomial functions $p_h$ satisfying the constraint $\int_{\Omega} p_h\,dx = 0$, which can be somewhat inconvenient to enforce in practical computations.
To circumvent this difficulty, we replace this requirement with a zero-mean condition over $\Omega_h$, namely $\int_{\Omega_h} p_h\,dx = 0$, which is more convenient to impose numerically. 
The corresponding inf--sup condition is stated as follows.
\begin{lemma}[New inf-sup pair]\label{newinfsup}
For all $p_h\in Q_h$, where $Q_h$ is defined in \eqref{defQ_H}, there exists $v_h^p\in X_h$ such that
\begin{equation}\label{66}
\begin{split}
\|p_h\|_{L^2(\Omega_h)}^2-Ch^2|p_h|_{H^1(\Omega_h^\Gamma)}^2&\le\int_{\Omega_h}\nabla p_h\cdot v_h^p\, dx,\\
|v_h^p|_{H^1(\Omega_h)}&\le C\|p_h\|_{L^2(\Omega_h)}.
\end{split}
\end{equation}        
\end{lemma}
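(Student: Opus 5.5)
We reduce Lemma~\ref{newinfsup} to Lemma~\ref{oldinfsup} by shifting the mean. Given $p_h\in Q_h$, so that $\int_{\Omega_h}p_h\,dx=0$, set
\[
c:=\frac{1}{|\Omega|}\int_\Omega p_h\,dx, \qquad \tilde p_h:=p_h-c .
\]
Then $\tilde p_h\in\tilde Q_h$. Indeed, it is continuous, piecewise $P^{k-1}$, and has zero mean over $\Omega$, with the natural extension understood as in Lemma~\ref{oldinfsup}. Since $c$ is a constant, we have $\nabla\tilde p_h=\nabla p_h$ and $|\tilde p_h|_{H^1(\Omega_h^\Gamma)}=|p_h|_{H^1(\Omega_h^\Gamma)}$.

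Applying Lemma~\ref{oldinfsup} to $\tilde p_h$ gives some $v_h^p\in X_h$ with
\[
\|\tilde p_h\|_{L^2(\Omega_h)}^2-Ch^2|p_h|_{H^1(\Omega_h^\Gamma)}^2\le\int_{\Omega_h}\nabla p_h\cdot v_h^p\,dx
\]
and $|v_h^p|_{H^1(\Omega_h)}\le C\|\tilde p_h\|_{L^2(\Omega_h)}$. Two comparisons between $\tilde p_h$ and $p_h$ then finish the argument.

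For the lower bound, we use that $p_h$ has zero mean on $\Omega_h$. This means $p_h$ is $L^2(\Omega_h)$-orthogonal to constants, so
\[
\|\tilde p_h\|_{L^2(\Omega_h)}^2=\|p_h\|_{L^2(\Omega_h)}^2+c^2|\Omega_h|\ge\|p_h\|_{L^2(\Omega_h)}^2 .
\]
The first inequality in \eqref{66} therefore follows directly.

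For the upper bound on $v_h^p$, we need $\|\tilde p_h\|_{L^2(\Omega_h)}\le C\|p_h\|_{L^2(\Omega_h)}$, which comes down to controlling $|c|$. If $\Omega\subset\Omega_h$, Cauchy--Schwarz gives
\[
|c|\le|\Omega|^{-1/2}\|p_h\|_{L^2(\Omega)}\le|\Omega|^{-1/2}\|p_h\|_{L^2(\Omega_h)} .
\]
In general $\Omega\setminus\Omega_h$ may be nonempty, but it lies in an $O(h)$-thin layer by Assumption~\ref{a2}. There $p_h$ is the polynomial extension from neighbouring boundary elements of uniformly bounded degree. A standard inverse/extension estimate for polynomials on $O(h)$-neighbourhoods of an element then gives $\|p_h\|_{L^2(\Omega\setminus\Omega_h)}\le C\|p_h\|_{L^2(\Omega_h^\Gamma)}$.

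Combining these, $|c|\,|\Omega_h|^{1/2}\le C\|p_h\|_{L^2(\Omega_h)}$, and hence $|v_h^p|_{H^1(\Omega_h)}\le C\|p_h\|_{L^2(\Omega_h)}$.

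The main obstacle is this last step: making rigorous the bound on the extension of $p_h$ to $\Omega\setminus\Omega_h$. It relies on the $O(h)$ distance between $\Gamma$ and $\Gamma_h$ together with shape regularity. If that is delicate, I would instead note that in Lemma~\ref{oldinfsup} the extension is only used inside that lemma's own argument, and that $|\Omega|$ and $|\Omega_h|$ are uniformly comparable, so that the simple Cauchy--Schwarz bound suffices.
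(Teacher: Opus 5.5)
Your argument is correct and is essentially the paper's proof. It uses the same constant shift (the paper's $c_p$ equals your $-c$ once $\int_{\Omega_h}p_h\,dx=0$ is used) and the same orthogonality identity $\|p_h-c\|_{L^2(\Omega_h)}^2=\|p_h\|_{L^2(\Omega_h)}^2+c^2|\Omega_h|$. It also reduces the $H^1$ bound to the same ingredient, an $L^2$-stable extension of $p_h$ to $\Omega\setminus\Omega_h$. The only difference there is that the paper takes this extension from a cited discrete extension operator and bounds $|c_p|$ through the measures of $\Omega_h\setminus\Omega$ and $\Omega\setminus\Omega_h$, whereas you use your elementwise polynomial inverse estimate.

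One caveat: your fallback would not work. The constant $c$ is defined through the extended values of $p_h$ on $\Omega\setminus\Omega_h$, so comparability of $|\Omega|$ and $|\Omega_h|$ alone cannot replace that extension-stability bound.
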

\begin{proof}
The proof is based on the result of Lemma~\ref{oldinfsup}. 
Let $p_h\in Q_h$, which satisfies $\int_{\Omega_h}p_h\, dx=0$. Since $p_h$ is only defined on $\Omega_h$ instead of $\Omega_h\cup\Omega$, we use a standard discrete finite element extension, and continue
to denote the extended function by $p_h$. The extension agrees with
the original function on $\Omega_h$ and satisfies
\[
\|p_h\|_{L^2(\Omega_h\cup\Omega)}
\leq
C_{\rm ext}\|p_h\|_{L^2(\Omega_h)},
\tag{3.30a}
\]
where $C_{\rm ext}$ is independent of $h$ and of the cut
configuration; see \cite[Sections 2.1--2.2, Lemma 2.1]{BHL2022NM}. Consider the following modified function: 
\begin{equation}\label{decomp}
\hat{p}_h:=p_h+c_p   
\end{equation}
where $c_p$ is a constant given by
\begin{equation}\label{def:cp}
c_p:=\frac{1}{|\Omega|}
\Big(\int_{\Omega_h}p_h\, dx-
\int_{\Omega}p_h\, dx\Big).         
\end{equation}
Since $\int_{\Omega_h} p_h\, dx = 0$, $p_h$ is orthogonal to constants in $L^2(\Omega_h)$. Thus, \eqref{decomp} gives an orthogonal decomposition in $L^2(\Omega_h)$. Then we have
\begin{equation}\label{72}
\|p_h+c_p\|_{L^2(\Omega_h)}^2
=\|p_h\|_{L^2(\Omega_h)}^2+c_p^2|\Omega_h|\ge\|p_h\|_{L^2(\Omega_h)}^2.
\end{equation}
Further, $\hat{p}_h$ is still a continuous piecewise polynomial function, and
\begin{equation}
\int_{\Omega}\hat{p}_h\, dx=\int_{\Omega}p_h\, dx
+ |\Omega| c_p=\int_{\Omega_h}p_h\, dx=0.
\end{equation}
By Lemma \ref{oldinfsup}, there exists $\hat{v}_h^p\in X_h$ such that
\begin{equation}\label{68}
\begin{split}    \|\hat{p}_h\|_{L^2(\Omega_h)}^2-Ch^2|\hat{p}_h|_{H^1(\Omega_h^\Gamma)}^2&\le\int_{\Omega_h}\nabla\hat{p}_h\cdot\hat{v}_h^p\, dx,\\
\|\nabla\hat{v}_h^p\|_{L^2(\Omega_h)}&\le C\|\hat{p}_h\|_{L^2(\Omega_h)}.
\end{split}
\end{equation}
We claim that $(\hat{v}_h^p,p_h)$ is exactly the inf-sup pair that satisfies (\ref{66}). Notice that $\hat{p}_h$ and $p_h$ differ by a constant, so we have $\nabla\hat{p}_h\equiv\nabla p_h$. From the first equation in (\ref{68}) we get
\begin{equation}\label{71}
\begin{split}
\|p_h+c_p\|^2_{L^2(\Omega_h)}&\le \int_{\Omega_h}\nabla(p_h+c_p)\cdot\hat{v}_h^p \, dx +Ch^2|p_h+c_p|_{H^1(\Omega_h^\Gamma)}^2\\
&=\int_{\Omega_h}\nabla p_h\cdot\hat{v}_h^p\, dx
+Ch^2|p_h|_{H^1(\Omega_h^\Gamma)}^2.
\end{split}                
\end{equation}
Substituting the bound (\ref{72}) into (\ref{71}) we obtain the first inequality in (\ref{66}). For the second inequality in (\ref{66}), we have
\begin{subequations}\label{eq:vp-cp-bounds}

\begin{equation}
\begin{aligned}
\|\nabla\widehat v_h^p\|_{L^2(\Omega_h)}^2
&\le C\|\widehat p_h\|_{L^2(\Omega_h)}^2= C\left(
    \|p_h\|_{L^2(\Omega_h)}^2
    +c_p^2|\Omega_h|
    \right).
\end{aligned}
\label{eq:vp-bound}
\end{equation}

By the definition of \(c_p\) in \eqref{def:cp}, together with the
stability of the discrete extension, we obtain
\begin{equation}
\begin{aligned}
|c_p|
&=
\frac{1}{|\Omega|}
\left|
\int_{\Omega_h}p_h\,dx-\int_{\Omega}p_h\,dx
\right| \\
&\le
\frac{
\bigl(
|\Omega_h\setminus\Omega|
+|\Omega\setminus\Omega_h|
\bigr)^{1/2}
}{
|\Omega|
}
\|p_h\|_{L^2(\Omega_h\cup\Omega)} \\
&\le
\frac{
C\bigl(
|\Omega_h\setminus\Omega|
+|\Omega\setminus\Omega_h|
\bigr)^{1/2}
}{
|\Omega|
}
\|p_h\|_{L^2(\Omega_h)} .
\end{aligned}
\label{eq:cp-bound}
\end{equation}

\end{subequations}

Substituting \eqref{eq:cp-bound} into \eqref{eq:vp-bound}
yields the second inequality in \eqref{66}. Consequently,
\((\widehat v_h^p,p_h)\) is the desired inf--sup pair.

\hfill\end{proof}
\subsubsection{Stability}
We rewrite scheme~\eqref{3.7} into the following abstract variational form.
Find $(w_h,u_h,p_h)\in X_h\times X_h\times Q_h$ such that
\begin{equation}\label{c077}
A_h(w_h,u_h,p_h;\eta_h,v_h,q_h) = L(\eta_h,v_h,q_h)
\end{equation}
for all $(\eta_h,v_h,q_h)\in X_h\times X_h\times Q_h$,
where
\begin{align}
\label{Stoke:ah}\notag
A_h(w_h,u_h,p_h;{}&{}\eta_h,v_h,q_h)
=(w_h,\eta_h)_{L^2(\Omega_h)}
+(\nabla w_h,\nabla \eta_h)_{L^2(\Omega_h)}
+(\nabla\!\cdot u_h,\nabla\!\cdot v_h)_{L^2(\Omega_h)}\\\notag
&+2(Du_h,D\eta_h)_{L^2(\Omega_h)}
-(p_h,\nabla\!\cdot\eta_h)_{L^2(\Omega_h)}
-\bigl((2Du_h-p_h I)n,\eta_h\bigr)_{L^2(\partial\Omega_h)} \\\notag
&-2(Dw_h,Dv_h)_{L^2(\Omega_h)}
+(\nabla\!\cdot w_h,q_h)_{L^2(\Omega_h)}
+\bigl(w_h,(2Dv_h-q_h I)n\bigr)_{L^2(\partial\Omega_h)} \\
&+h^2\sum_{T\in\mathcal{T}_h^\Gamma}(-2 \textrm{div}D u_h+\nabla p_h,
-2\textrm{div}D v_h+\nabla q_h)_{L^2(T)}
+j_h^{(2)}(u_h,v_h),\\
L(\eta_h,v_h,q_h){}&=(f,\eta_h)_{L^2(\Omega_h)}
+h^2\sum_{T\in\mathcal{T}_h^\Gamma}(f,-2\textrm{div}D v_h+\nabla q_h)_{L^2(T)}.
\end{align}
Similar to the Poisson case, the stability follows from continuity and the generalized inf-sup condition of the bilinear form $A_h$.
We first introduce the norm $\vertiii{\cdot}_h$ on $X_h\times X_h\times Q_h$ by
\begin{align*}
\vertiii{w_h,u_h,p_h}_h^2
&=\|\nabla w_h\|_{L^2(\Omega_h)}^2
 +\|D u_h\|_{L^2(\Omega_h)}^2
 +\|\nabla\!\cdot u_h\|_{L^2(\Omega_h)}^2\\
&+\|p_h\|_{L^2(\Omega_h)}^2
 +h^2\sum_{T\in\mathcal{T}_h^\Gamma}\|-2\operatorname{div} D u_h+\nabla p_h\|_{L^2(T)}^2+j_h^{(2)}(u_h,u_h).
\end{align*}
Then the continuity and a generalized inf--sup condition for $A_h$ can be established.
\begin{lemma}[Continuity]
The bilinear form $A_h$ in \eqref{Stoke:ah} is continuous with respect to the norm $\vertiii{\cdot}_h$, i.e.,
there exists $C>0$ such that
\begin{equation}
A_h(w_h,u_h,p_h;\eta_h,v_h,q_h)
\le C\,\vertiii{w_h,u_h,p_h}_h\,
        \vertiii{\eta_h,v_h,q_h}_h
\end{equation}
for all $(w_h,u_h,p_h),(\eta_h,v_h,q_h)\in X_h\times X_h\times Q_h$.
\end{lemma}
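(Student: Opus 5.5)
The plan is to bound $A_h$ term by term, following the Poisson continuity lemma. The only terms that are not immediately controlled by Cauchy--Schwarz are the two boundary integrals on $\partial\Omega_h$.

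\textbf{Volume and penalty terms.} Using Korn's inequality (Lemma~\ref{korn}) and the Poincar\'e inequality (Lemma~\ref{poincare2}), the quantities $\|w_h\|_{L^2(\Omega_h)}$, $\|Dw_h\|_{L^2(\Omega_h)}$ and $\|\nabla\!\cdot w_h\|_{L^2(\Omega_h)}$ are all bounded by $C\|\nabla w_h\|_{L^2(\Omega_h)}$. In the same way, $\|\nabla u_h\|_{L^2(\Omega_h)}\le C\|Du_h\|_{L^2(\Omega_h)}$, and likewise for $\eta_h,v_h$. With these bounds, Cauchy--Schwarz handles directly the terms $(w_h,\eta_h)_{H^1}$, $(\nabla\!\cdot u_h,\nabla\!\cdot v_h)$, $2(Du_h,D\eta_h)$, $(p_h,\nabla\!\cdot\eta_h)$, $2(Dw_h,Dv_h)$ and $(\nabla\!\cdot w_h,q_h)$. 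The $h^2$-weighted residual term and $j_h^{(2)}$ are bounded because both appear in the norm.

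\textbf{Boundary term $((2Du_h-p_hI)n,\eta_h)_{L^2(\partial\Omega_h)}$.} Here I follow the argument of the Poisson continuity lemma. Since $\eta_h\in X_h$ vanishes on $\{\phi_h=0\}$, I apply the divergence theorem elementwise on the strip $B_h$ to $\sigma(u_h,p_h):=2Du_h-p_hI$. This gives
\[
(\sigma n,\eta_h)_{L^2(\partial\Omega_h)}=(\sigma,\nabla\eta_h)_{L^2(B_h)}+\sum_{T\in\mathcal T_h^\Gamma}(\operatorname{div}\sigma,\eta_h)_{L^2(B_h\cap T)}+\mathcal E_1,
\]
where $\operatorname{div}\sigma=2\operatorname{div}Du_h-\nabla p_h$. $\mathcal E_1$ collects facet jumps of $\sigma n$ across $\mathcal F_h^\Gamma$, and because $p_h$ is continuous, only jumps of $2Du_h n$ survive.
\begin{itemize}
\item The first term is bounded by $C(\|Du_h\|+\|p_h\|)\|\nabla\eta_h\|$.
\item For the second term I use $\|\eta_h\|_{L^2(\Omega_h^\Gamma)}\le Ch|\eta_h|_{H^1}$ from Lemma~\ref{poincare1}. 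This gives the bound $Ch\big(\sum_T\|{-2}\operatorname{div}Du_h+\nabla p_h\|_{L^2(T)}^2\big)^{1/2}|\eta_h|_{H^1}$, and that factor is controlled by the norm.
\item For $\mathcal E_1$, I note that $[Du_h n]$ involves only the normal-derivative jump $[\partial_n u_h]$, since tangential derivatives of a continuous function are continuous across facets. The trace inequality \eqref{eq:trace-inequ} then gives $|\mathcal E_1|\le C j_h^{(2)}(u_h,u_h)^{1/2}|\eta_h|_{H^1}$.
\end{itemize}

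\textbf{Boundary term $(w_h,(2Dv_h-q_hI)n)_{L^2(\partial\Omega_h)}$.} This is the symmetric counterpart of the previous term, with the roles exchanged. The same integration by parts on $B_h$, using $w_h=0$ on $\{\phi_h=0\}$, bounds it by $C\vertiii{\eta_h,v_h,q_h}_h\,\|\nabla w_h\|_{L^2(\Omega_h)}$.

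\textbf{Main obstacle.} The delicate point is the boundary integrals: making sure that $p_h$ and $Du_h$ on $B_h$ are controlled by the global $L^2$ norms and not by some weaker quantity. Integrating over $B_h\subset\Omega_h$ makes this automatic. A possible alternative is to bound $\|p_h\|_{L^2(\partial\Omega_h)}$ directly with an inverse trace inequality, which costs $h^{-1/2}$. That loss would be compensated by $\|\eta_h\|_{L^2(\partial\Omega_h)}\le Ch^{1/2}|\eta_h|_{H^1}$ from Lemma~\ref{poincare1}, so this route works as well. I would in fact prefer it for $p_h$ and $Du_h$ because it is simpler: $\|\sigma\|_{L^2(\partial\Omega_h)}\le Ch^{-1/2}(\|Du_h\|_{L^2(\Omega_h^\Gamma)}+\|p_h\|_{L^2(\Omega_h^\Gamma)})$, multiplied by $Ch^{1/2}|\eta_h|_{H^1}$. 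Collecting all bounds gives the continuity estimate.
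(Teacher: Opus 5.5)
Your proposal is correct, and the route you say you prefer at the end is the paper's proof. The paper bounds the boundary term by $\sqrt{h}\,\|(2Du_h-p_hI)n\|_{L^2(\partial\Omega_h)}\cdot h^{-1/2}\|\eta_h\|_{L^2(\partial\Omega_h)}$, applies the inverse trace inequality to the first factor and Lemma~\ref{poincare1} to the second, treats the symmetric term the same way, and calls the remaining terms straightforward. Your main-line divergence-theorem argument on $B_h$, modeled on the Poisson lemma and using that only $[\partial_n u_h]$ survives in $[2Du_hn]$, is also valid. It is simply unnecessary here, because $\|Du_h\|_{L^2(\Omega_h)}$ and $\|p_h\|_{L^2(\Omega_h)}$ already appear in the norm.
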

    \begin{proof}
We focus on proving the continuity of the boundary term in the bilinear form \eqref{Stoke:ah}, since the continuity of the remaining terms in $A_h$ is straightforward.
By the trace and Poincar\'e inequalities, we obtain
\begin{align}\label{bnd:trace}\notag
-((2Du_h-p_hI)n,\eta_h)_{L^2(\partial\Omega_h)}
&\le \sqrt{h}\,\| (2Du_h-p_hI)n\|_{L^2(\partial\Omega_h)}\;h^{-\frac12}{\|\eta_h\|_{L^2(\partial\Omega_h)}}\\\notag
&\le C \|2Du_h-p_hI\|_{L^2(\Omega_h^\Gamma)}\,|\eta_h|_{H^1(\Omega_h^\Gamma)}\\
&\le C\big(\|Du_h\|_{L^2(\Omega_h)}+\|p_h\|_{L^2(\Omega_h)}\big)\,|\eta_h|_{H^1(\Omega_h)}.
\end{align}
An analogous estimate holds for $(w_h,(2Dv_h-q_hI)n)_{L^2(\partial\Omega_h)}$, and the result follows.
\hfill\end{proof}
\begin{lemma}[Generalized inf-sup condition]\label{Stokes:inf-sup}
The bilinear form~\eqref{Stoke:ah} satisfies a generalized inf--sup condition: there exists a constant $c_{\mathrm{infsup}}>0$ such that, for every $(w_h,u_h,p_h)\in X_h\times X_h\times Q_h$, there exists $(\eta_h,v_h,q_h)\in X_h\times X_h\times Q_h$ satisfying
\begin{equation}
A_h(w_h,u_h,p_h;\eta_h,v_h,q_h)\ge c_{\mathrm{infsup}}\vertiii{w_h,u_h,p_h}_h\vertiii{\eta_h,v_h,q_h}_h.
\end{equation}
\end{lemma}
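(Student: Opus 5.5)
The plan is to mimic the Poisson argument in Lemma~\ref{lem:inf-sup}, building the test function in three layers:
\[
(\eta_h,v_h,q_h)=(w_h+\alpha u_h+\delta v_h^p,\;u_h,\;p_h),
\]
where $v_h^p$ is the inf--sup partner of $p_h$ from Lemma~\ref{newinfsup}, and $0<\delta\ll\alpha\ll1$ are analysis-only parameters.

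\textbf{Diagonal part.} Test first with $(w_h,u_h,p_h)$. The skew structure of $A_h$ makes all $w$--$u$--$p$ coupling terms cancel exactly:
\[
2(Du_h,Dw_h)-(p_h,\nabla\cdot w_h)-((2Du_h-p_hI)n,w_h)_{\partial\Omega_h}
\]
cancels against
\[
-2(Dw_h,Du_h)+(\nabla\cdot w_h,p_h)+(w_h,(2Du_h-p_hI)n)_{\partial\Omega_h}.
\]
What remains is
\[
\|w_h\|_{H^1(\Omega_h)}^2+\|\nabla\cdot u_h\|_{L^2(\Omega_h)}^2+h^2\sum_{T\in\mathcal{T}_h^\Gamma}\|{-2}\operatorname{div}Du_h+\nabla p_h\|_{L^2(T)}^2+j_h^{(2)}(u_h,u_h).
\]
So $w_h$, $\nabla\cdot u_h$ and the stabilization terms are controlled with unit weight.

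\textbf{The $\alpha u_h$ term.} This contributes
\[
\alpha\Big[(w_h,u_h)_{H^1}+2\|Du_h\|^2-(p_h,\nabla\cdot u_h)-((2Du_h-p_hI)n,u_h)_{\partial\Omega_h}\Big].
\]
\begin{itemize}
\item For the boundary term, I apply the divergence theorem on the strip $B_h$, where $u_h=0$ on $\{\phi_h=0\}$, exactly as in the Poisson continuity proof. This rewrites it as a volume integral $\int_{B_h}(2Du_h-p_hI):\nabla u_h+(-2\operatorname{div}Du_h+\nabla p_h)\cdot u_h$ plus facet jump terms. The $p_h$ part is $-(p_h,\nabla\cdot u_h)_{B_h}$.
\item The residual and jump parts are bounded by Lemma~\ref{poincare1} together with the $h^2$-residual and $j_h^{(2)}$ seminorms, yielding $\|Du_h\|_{L^2(\Omega_h^\Gamma)}$-type contributions.
\item Those $\|Du_h\|_{L^2(\Omega_h^\Gamma)}$ contributions are absorbed via Lemma~\ref{phifem_stokes}: choosing $\beta$ small gives $\gamma<1$, which leaves a positive multiple $c\alpha\|Du_h\|^2$, at the cost of $\alpha\beta$ times terms already controlled in the diagonal part. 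The same lemma also yields control of $h^2|p_h|_{H^1(\Omega_h^\Gamma)}^2$, which is needed below.
\item The cross term $(w_h,u_h)_{H^1}$ is handled by Lemma~\ref{poincare2}, Lemma~\ref{korn} and Young's inequality, with $\alpha$ small.
\item The coupling $\alpha(p_h,\nabla\cdot u_h)$, including the $B_h$ piece, is bounded by
\[
\alpha\varepsilon\|p_h\|^2+\alpha\varepsilon^{-1}\|\nabla\cdot u_h\|^2 .
\]
\end{itemize}

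\textbf{The $\delta v_h^p$ term.} Integrating by parts on $\Omega_h$ gives
\[
-(p_h,\nabla\cdot v_h^p)+(p_hn,v_h^p)_{\partial\Omega_h}=(\nabla p_h,v_h^p).
\]
By Lemma~\ref{newinfsup} this is at least
\[
\delta\bigl(\|p_h\|^2-Ch^2|p_h|_{H^1(\Omega_h^\Gamma)}^2\bigr).
\]
The remaining pieces, namely $\delta(w_h,v_h^p)_{H^1}$, $2\delta(Du_h,Dv_h^p)$ and $\delta(2Du_h n,v_h^p)_{\partial\Omega_h}$ (the last via the trace bound \eqref{bnd:trace}), are bounded by
\[
\delta\|p_h\|\,\bigl(\|\nabla w_h\|+\|Du_h\|\bigr)\le \tfrac{\delta}{4}\|p_h\|^2+C\delta\bigl(\|\nabla w_h\|^2+\|Du_h\|^2\bigr).
\]
The $h^2|p_h|_{H^1(\Omega_h^\Gamma)}^2$ loss is controlled using the Lemma~\ref{phifem_stokes} bound from the previous step.

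\textbf{Main obstacle: ordering the parameters.} This is the step I expect to be hardest. I would choose, in order:
\begin{enumerate}
\item $\alpha$ small, so the $\alpha$-terms are dominated by the unit diagonal terms.
\item $\delta\le c'\alpha$, so that $C\delta\|Du_h\|^2\le \tfrac{c}{2}\alpha\|Du_h\|^2$.
\item $\varepsilon=\delta/(4\alpha)$, so that $\alpha\varepsilon\|p_h\|^2\le \tfrac{\delta}{4}\|p_h\|^2$. This then requires $\alpha\varepsilon^{-1}=4\alpha^2/\delta$ to be small, i.e.\ $\alpha^2\ll\delta$.
\end{enumerate}
All of these are satisfied by taking $\delta=\alpha^{3/2}$ with $\alpha$ sufficiently small. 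The pressure term and the $h^2|p_h|_{H^1}$ term inherit small weights proportional to $\delta$, so they are absorbed consistently.

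\textbf{Conclusion.} The test value is then bounded below by $c\,\vertiii{w_h,u_h,p_h}_h^2$. Finally, I check that the test norm is comparable to the trial norm:
\[
\vertiii{w_h+\alpha u_h+\delta v_h^p,\,u_h,\,p_h}_h\le C\vertiii{w_h,u_h,p_h}_h,
\]
using $|v_h^p|_{H^1}\le C\|p_h\|$ and Lemma~\ref{korn}. This gives the generalized inf--sup condition.
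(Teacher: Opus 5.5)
Your proposal is correct and matches the paper's proof. Both use the test function $(w_h+\alpha u_h+\delta v_h^p,u_h,p_h)$. Both rewrite the boundary term on $B_h$ by the divergence theorem and use Lemma~\ref{phifem_stokes} to recover $\|Du_h\|^2_{L^2(\Omega_h)}$ and $h^2|p_h|^2_{H^1(\Omega_h^\Gamma)}$. Both use the pair from Lemma~\ref{newinfsup} for the pressure. The only difference is bookkeeping: the paper bounds $\alpha(p_h,\nabla\cdot u_h)$ by $\tfrac18\|\nabla\cdot u_h\|^2+C\alpha^2\|p_h\|^2$ and takes $\delta=3\hat C\alpha^2$, whereas your $\varepsilon$-weighted Young inequality with $\delta=\alpha^{3/2}$ works equally well.
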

\begin{proof}
Testing~\eqref{c077} with $(\eta_h,v_h,q_h)=(w_h+\alpha u_h,u_h,p_h)$ leads to
\begin{align}
A_h&(w_h,u_h,p_h;w_h+\alpha u_h,u_h,p_h) \nonumber\\
={}&(w_h,w_h)_{H^1(\Omega_h)}
+\alpha(w_h,u_h)_{H^1(\Omega_h)}
+2\alpha\|D u_h\|_{L^2(\Omega_h)}^2 \nonumber\\
&-\alpha(p_h,\nabla\!\cdot u_h)_{L^2(\Omega_h)}
-\alpha\bigl((2D u_h-p_h I)n,u_h\bigr)_{L^2(\partial\Omega_h)} \nonumber\\
&+\|\nabla\!\cdot u_h\|_{L^2(\Omega_h)}^2
+h^2
\sum_{T\in \mathcal{T}_h^\Gamma}\|2\operatorname{div} D u_h-\nabla p_h\|_{L^2(T)}^2
+j_h^{(2)}(u_h,u_h).
\label{3.21}
\end{align}
First, by Korn's inequality~\eqref{eq:korn} and Young's inequality, we obtain
\begin{align*}
    \alpha(w_h,u_h)_{H^1(\Omega_h)}-\alpha (p_h,\nabla\!\cdot u_h)_{L^2(\Omega_h)}
    &\ge -\frac12 \|\nabla w_h\|_{L^2(\Omega_h)}^2 
    - \frac{1}{8} \|\nabla\cdot u_h\|_{L^2(\Omega_h)}^2 \\
&    - C\alpha^2 (\|Du_h\|_{L^2(\Omega_h)}^2+\|p_h\|_{L^2(\Omega_h)}^2).
\end{align*}
We now estimate the boundary term $((2D u_h-p_h I)n,u_h)_{L^2(\partial\Omega_h)}$. 
The treatment of this term exploits the property that $u_h = 0$ on $\{\phi_h = 0\}$ and applies the divergence theorem to convert the boundary integral over $\partial\Omega_h$ into a domain integral over $B_h$, i.e., 
\begin{align*}
((2Du_h\!-\!p_hI)n&,u_h)_{L^2(\partial\Omega_h)} 
= ((2Du_h\!-\!p_hI)n,u_h)_{L^2(\partial B_h)}\\
&=\sum_{T\in \mathcal{T}_h^\Gamma}((2\text{div}Du_h
\!-\!\nabla p_h),u_h)_{L^2(T\cap B_h)}
+ (2Du_h\!-\!p_hI,\nabla u_h)_{L^2(B_h)}
+\mathcal{E},
\end{align*}
where $\mathcal{E}$ collects the edge jumps of $(Du_h)n$ across element edges and, according to Lemma~\ref{poincare1} and \eqref{defj2}, it satisfies
\begin{align*}
|\mathcal{E}|
\le \sum_{E\in\mathcal{F}_h^\Gamma}
|([2(Du_h)n]_E,u_h)_{L^2(E)}|
\le C(j_h^{(2)}(u_h,u_h))^{\frac12}
\| \nabla u_h\|_{L^2(\Omega_h^\Gamma)}.
\end{align*}
Thus, by using the Poincar\'e inequality~\eqref{eq:poincare-inequ}, the trace inequality~\eqref{eq:trace-inequ}, and Korn's inequality~\eqref{eq:korn}, we obtain
\begin{align}
&-\alpha|((2Du_h-p_hI)n,u_h)_{L^2(\partial\Omega_h)}| \notag\\
&\ge{}-\frac{h^2}{2}
\sum_{T\in \mathcal{T}_h^\Gamma}
\left\|2\text{div}Du_h-\nabla p_h\right\|_{L^2(T)}^2
-C\alpha^2\|Du_h\|_{L^2(\Omega_h)}^2\notag\\
&\hspace{1.5cm} 
-\frac{1}{2}j_h^{(2)}(u_h,u_h)
-\frac{1}{2}\|\nabla\cdot u_h\|_{L^2(\Omega_h^\Gamma)}^2
-{C}\alpha^2\|p_h\|_{L^2(\Omega_h^\Gamma)}^2
-2\alpha\|Du_h\|_{L^2(\Omega_h^\Gamma)}^2.\label{eq:344}
\end{align}
We emphasize that it is crucial to control the coefficient of $\|D u_h\|_{L^2(\Omega_h)}^2$ by $C\alpha^2$, 
which allows it to be absorbed into the positive term $2\alpha\|D u_h\|_{L^2(\Omega_h)}^2$ in \eqref{3.21}. 
For the treatment of $2\alpha\|Du_h\|_{L^2(\Omega_h^\Gamma)}^2$ in \eqref{eq:344}, we invoke the following result:
by Lemma \ref{phifem_stokes} with $\beta=\frac{1}{4}$, there exists $0<\gamma<1$ such that
\begin{align}
-\|D u_h\|_{L^2(\Omega_h^\Gamma)}^2
\ge{}\;
&-\gamma\|D u_h\|_{L^2(\Omega_h)}^2 
-\frac{h^2}{4}\sum_{T\in\mathcal{T}_h^\Gamma}\|2\operatorname{div} D u_h-\nabla p_h\|_{L^2(T)}^2 \nonumber\\
&-\frac{1}{4}\|\nabla\!\cdot u_h\|_{L^2(\Omega_h^\Gamma)}^2
-\frac{1}{4}j_h^{(2)}(u_h,u_h)
+(1-\gamma)h^2|p_h|_{H^1(\Omega_h^\Gamma)}^2.
\label{3.22}
\end{align}
Combining \eqref{3.21}--\eqref{3.22}, and let $\alpha<\frac{1}{4}$, we derive
\begin{align}
A_h&(w_h,u_h,p_h;w_h+\alpha u_h,u_h,p_h) 
\ge{}
\frac12\|\nabla w_h\|_{L^2(\Omega_h)}^2
-\hat{C}\alpha^2\|p_h\|_{L^2(\Omega_h)}^2 \nonumber\\
&+\bigl(2(1-\gamma)\alpha-C\alpha^2\bigr)\|D u_h\|_{L^2(\Omega_h)}^2
+2(1-\gamma)\alpha h^2|p_h|_{H^1(\Omega_h^\Gamma)}^2 \nonumber\\
&+\frac14|\nabla\!\cdot u_h\|_{L^2(\Omega_h)}^2 
+\frac{h^2}{4}\sum_{T\in\mathcal{T}_h^\Gamma}\|2\operatorname{div} D u_h-\nabla p_h\|_{L^2(T)}^2
+\frac14 j_h^{(2)}(u_h,u_h).
\label{3.23}
\end{align}
To control the negative term $-\hat C \alpha^2 \|p_h\|_{L^2(\Omega_h)}^2$ in~\eqref{3.23}, we invoke Lemma~\ref{newinfsup} and use the associated inf--sup pair $(v_h^p,p_h)$.
By choosing the test functions $(\eta_h,v_h,q_h)=(v_h^p,0,0)$ and estimating the boundary term using the trace inequality as in \eqref{bnd:trace}, it is straightforward to deduce
\begin{align}\label{3.25}\notag
&A_h(w_h,u_h,p_h;v_h^p,0,0)\\\notag
={}&(w_h,v_h^p)_{H^1(\Omega_h)}+2(Du_h,Dv_h^p)_{L^2(\Omega_h)}
-2(Du_h n,v_h^p)_{L^2(\partial\Omega_h)}+(\nabla p_h,v_h^p)_{L^2(\Omega_h)}\\
\ge{}&\frac{1}{2}\|p_h\|_{L^2(\Omega_h)}^2-C\big(h^2|p_h|_{H^1(\Omega_h^\Gamma)}^2+\|\nabla w_h\|_{L^2(\Omega_h)}^2+\|Du_h\|_{L^2(\Omega_h)}^2\big),  
\end{align}
where we used \eqref{66} and Young's inequality in the last step.
Adding $3\hat{C}\alpha^2$ times~\eqref{3.25} to~\eqref{3.23}, and choosing $\alpha>0$ sufficiently small, we obtain
\begin{align}
A_h&(w_h,u_h,p_h;
     w_h+\alpha u_h+3\hat{C}\alpha^2 v_h^p,\,
     u_h,p_h) \nonumber\\
\ge{}\;&(\frac12-3C\hat{C}\alpha^2)\|\nabla w_h\|_{L^2(\Omega_h)}^2
+\frac{\hat{C}}{2}\alpha^2\|p_h\|_{L^2(\Omega_h)}^2 \nonumber\\
&+\bigl(2(1-\gamma)\alpha-(C+3C\hat{C})\alpha^2\bigr)\|D u_h\|_{L^2(\Omega_h)}^2 
+\bigl((1-\gamma)2\alpha-3C\hat{C}\alpha^2\bigr)
  h^2|p_h|_{H^1(\Omega_h^\Gamma)}^2 \nonumber\\
&+\frac14\|\nabla\!\cdot u_h\|_{L^2(\Omega_h)}^2
+\frac{h^2}{4}\sum_{T\in\mathcal{T}_h^\Gamma}\|2\operatorname{div} D u_h-\nabla p_h\|_{L^2(T)}^2
+\frac14 j_h^{(2)}(u_h,u_h) \nonumber\\
\ge{}\;&c_{\mathrm{infsup}}\,
\vertiii{w_h,u_h,p_h}_h\,
\vertiii{w_h+\alpha u_h+3\hat{C}\alpha^2 v_h^p,\,u_h,p_h}_h,
\label{c083}
\end{align}
where we used Lemma \ref{korn} in the last step. Since the trial and test spaces coincide and are finite-dimensional,
the generalized inf--sup condition implies injectivity and hence
bijectivity. Therefore, the transpose non-degeneracy condition and
the unique solvability of the discrete Stokes problem follow.
\hfill\end{proof}
    
\subsubsection{Consistency and Error Estimate}
We begin by deriving the error equations. 
Let $\tilde u\in H^{k+1}(\Omega_h\cup\Omega)\cap H^{3}(\Omega_h\cup\Omega)$ and $\tilde p\in H^{k}(\Omega_h\cup\Omega)$ be the Stein extensions of the solution $(u,p)$ to $\Omega_h\cup\Omega$, respectively.
We obtain
\begin{align}\label{stokes:regularity}
\|\tilde u\|_{H^{k+1}(\Omega_h)}
+\|\tilde u\|_{H^{3}(\Omega_h)}
+\|\tilde p\|_{H^{k}(\Omega_h)}
\le C(\|f\|_{H^{k-1}(\Omega_h\cup\Omega)}+
\|f\|_{H^{1}(\Omega_h\cup\Omega)}).
\end{align}
Let $\tilde I_h$ be the operator introduced in Lemma~\ref{interpolation}
and $I_h$ the standard interpolation on $\Omega_h$. 
However, in general $I_h\tilde p \notin Q_h$, since its mean over $\Omega_h$ need not vanish. 
Thus, we use $\Pi$ in \eqref{def:Pi} and consider $\Pi I_h\tilde p\in Q_h$.
Then one can define the consistency by substituting the interpolants $\tilde I_h\tilde u$ and projection $\Pi I_h\tilde p$ into the scheme, which leads to
\begin{align} 
A_h(0,\tilde I_h \tilde u, \Pi I_h \tilde p;\eta_h,v_h,q_h)=R_c(\eta_h,v_h,q_h)+L(\eta_h,v_h,q_h),
\label{c085}
\end{align}
where $R_c(\eta_h,v_h,q_h) =
R_1(\eta_h,v_h,q_h) +R_2(\eta_h,v_h,q_h)$ with 
\begin{align}\label{def:R1}
R_1(\eta_h,v_h,q_h)
{}&=2\bigl(D(\tilde{I}_h\tilde{u}-\tilde{u}),D\eta_h\bigr)_{L^2(\Omega_h)}
-\bigl(\Pi I_h \tilde p -\tilde{p},\nabla\!\cdot\eta_h\bigr)_{L^2(\Omega_h)} \nonumber\\
&-\bigl((2D(\tilde{I}_h\tilde{u}-\tilde{u})
-(\Pi I_h \tilde p-\tilde{p})I)n,\eta_h\bigr)_{L^2(\partial\Omega_h)} 
+ \bigl(\nabla\!\cdot(\tilde{I}_h\tilde{u} - \tilde u),\nabla\!\cdot v_h\bigr)_{L^2(\Omega_h)} \nonumber\\
&+h^2\sum_{T\in\mathcal{T}_h^\Gamma}\bigl(-2\operatorname{div} D(\tilde{I}_h\tilde{u}-\tilde{u})
+\nabla(\Pi I_h \tilde p-\tilde{p}),\,
-2\operatorname{div} D v_h+\nabla q_h
\bigr)_{L^2(T)} \nonumber\\
&+j_h^{(2)}(\tilde{I}_h\tilde{u}-\tilde{u},v_h),
\end{align}
and the consistency error due to the extension writes
\begin{align}\notag
R_2(\eta_h,v_h,q_h)
&:=(\!-\!f+(-2\text{div}
D\tilde{u}+\nabla\tilde{p}),\eta_h)_{L^2(\Omega_h)}
+(\nabla\cdot\tilde{u},\nabla\!\cdot v_h)_{L^2(\Omega_h)}\\
&\quad+h^2\sum_{T\in\mathcal{T}_h^\Gamma}
\big(
\!-\!f+(-2\text{div}
D\tilde{u}+\nabla\tilde{p}),\, -2\operatorname{div} D v_h+\nabla q_h
\big)_{L^2(T)}.
\label{c088}
\end{align}
To estimate $R_1$ in \eqref{def:R1}, we bound the terms involving $\tilde u$ by the interpolation estimate in Lemma~\ref{interpolation}. These estimates are standard. In particular, applying (3.9a) componentwise and using (3.3), we have \[ \begin{aligned} \left| \left( 2D(\widetilde I_h\widetilde u-\widetilde u)n,\eta_h \right)_{L^2(\partial\Omega_h)} \right| &\leq C\bigl(h^k+\|\phi-\phi_h\|_{H^1(\Omega_h)}\bigr) \\ &\quad\times \left( \|\widetilde u\|_{H^{k+1}(\Omega_h)} +\|\widetilde u\|_{H^3(\Omega_h)} \right) |\eta_h|_{H^1(\Omega_h)}. \end{aligned}\]
We next estimate the terms involving the pressure. Since the
projection $\Pi$ changes a function only by a constant, integration
by parts gives
\[
\begin{aligned}
&-
(\Pi I_h\widetilde p-\widetilde p,\nabla\cdot\eta_h)_
 {L^2(\Omega_h)}
+
((\Pi I_h\widetilde p-\widetilde p)n,\eta_h)_
 {L^2(\partial\Omega_h)}
\\
&\qquad
=
(\nabla(I_h\widetilde p-\widetilde p),\eta_h)_
 {L^2(\Omega_h)}
\\
&\qquad
=
-
(I_h\widetilde p-\widetilde p,\nabla\cdot\eta_h)_
 {L^2(\Omega_h)}
+
((I_h\widetilde p-\widetilde p)n,\eta_h)_
 {L^2(\partial\Omega_h)}.
\end{aligned}
\]
Consequently, by the scaled trace inequality (3.3a), the discrete
trace inequality (3.3), and the standard interpolation estimates,
\[
\begin{aligned}
&
\left|
-
(I_h\widetilde p-\widetilde p,\nabla\cdot\eta_h)_
 {L^2(\Omega_h)}
+
((I_h\widetilde p-\widetilde p)n,\eta_h)_
 {L^2(\partial\Omega_h)}
\right|
\\
&\quad\leq
\|I_h\widetilde p-\widetilde p\|_{L^2(\Omega_h)}
|\eta_h|_{H^1(\Omega_h)}
\\
&\qquad
+
\|I_h\widetilde p-\widetilde p\|_{L^2(\partial\Omega_h)}
\|\eta_h\|_{L^2(\partial\Omega_h)}
\\
&\quad\leq
C\Bigl(
\|I_h\widetilde p-\widetilde p\|_{L^2(\Omega_h)}
+
\|I_h\widetilde p-\widetilde p\|_{L^2(\Omega_h^\Gamma)}
\\
&\hspace{38mm}
+
h\|\nabla(I_h\widetilde p-\widetilde p)\|_
 {L^2(\Omega_h^\Gamma)}
\Bigr)
|\eta_h|_{H^1(\Omega_h)}
\\
&\quad\leq
Ch^k\|\widetilde p\|_{H^k(\Omega_h)}
|\eta_h|_{H^1(\Omega_h)}.
\end{aligned}
\]
Combining with \eqref{stokes:regularity}, we have
\begin{align}\notag
&|R_1(\eta_h,v_h,q_h)|
\le C(h^k+\|\phi-\phi_h\|_{H^1(\Omega_h)})
(\|f\|_{H^{k-1}(\Omega_h\cup\Omega)}
+\|f\|_{H^{1}(\Omega_h\cup\Omega)})
\vertiii{\eta_h,v_h,q_h}_h.
\end{align}
Moreover, to estimate $R_2$, we invoke Lemma~\ref{vanish}. It is then straightforward to estimate
\begin{align*}
\|-f+(-2\, \textrm{div}
D\tilde{u}+\nabla\tilde{p})\|_{L^2(\Omega_h)}
&=\|-f+(-2\,\textrm{div} D\tilde{u}+\nabla\tilde{p})\|_{L^2(\Omega_h\setminus\Omega)}\\
&\le Ch^{k-1}\|-f+(-2\,\textrm{div} D\tilde{u}+\nabla\tilde{p})\|_{H^{k-1}(\Omega_h\setminus\Omega)}\\
&\le Ch^{k-1}\big(\|f\|_{H^{k-1}(\Omega_h)}+\|\tilde{u}\|_{H^{k+1}(\Omega_h)}+\|\tilde{p}\|_{H^{k}(\Omega_h)}\big),\\
\|\nabla\!\cdot\tilde u\|_{L^2(\Omega_h^\Gamma)}
=\|\nabla\!\cdot\tilde u\|_{L^2(\Omega_h\setminus\Omega)}
&\le C h^{k}\,\|\nabla\!\cdot\tilde u\|_{H^{k}(\Omega_h\setminus\Omega)}
\le C h^{k}\,\|\tilde u\|_{H^{k+1}(\Omega_h)}.
\end{align*}
Collecting the above estimates, we obtain the bound
\begin{align}\label{eq:354}\notag
|R_c(\eta_h,v_h,q_h)|
\le C(h^k+\|\phi-\phi_h\|_{H^1(\Omega_h)})
(\|f\|_{H^{k-1}(\Omega_h\cup\Omega)}
+\|f\|_{H^{1}(\Omega_h\cup\Omega)})
\vertiii{\eta_h,v_h,q_h}_h.
\end{align}
To derive the error equation, we subtract \eqref{c085} from \eqref{c077}. This yields the error equation for
$e_u:=u_h-\tilde{I}_h\tilde{u}$ and $e_p:=p_h-\Pi I_h\tilde{p}$, which can be written as
\begin{equation}
A_h(w_h,e_u,e_p;\eta_h,v_h,q_h)=-R_c(\eta_h,v_h,q_h).
\end{equation} 
Using the generalized inf-sup condition in Lemma~\ref{Stokes:inf-sup}, there exists $(\eta_h,{v}_h,q_h)\in X_h\times X_h\times Q_h$ such that
\begin{align}\notag
\vertiii{w_h,e_u,e_p}_h
&\le \frac{1}{c_{\mathrm{infsup}}}\frac{|R_c(\eta_h,{v}_h,q_h)|}{\vertiii{\eta_h,{v}_h,q_h}_h}\\
&\le C(h^k+\|\phi-\phi_h\|_{H^1(\Omega_h)})
(\|f\|_{H^{k-1}(\Omega_h\cup\Omega)}
+\|f\|_{H^{1}(\Omega_h\cup\Omega)}).
\end{align}
Combining the estimate above with the interpolation estimates in
Lemma~\ref{interpolation}, and using the triangle inequality, we obtain
\begin{align*}
\|\widetilde u-u_h\|_{H^1(\Omega_h)}
&\le
\|\widetilde u-\widetilde I_h\widetilde u\|_{H^1(\Omega_h)}
+\|e_u\|_{H^1(\Omega_h)}
\\
&\le
C\bigl(h^k+\|\phi-\phi_h\|_{H^1(\Omega_h)}\bigr)
\Bigl(
\|f\|_{H^{k-1}(\Omega_h\cup\Omega)}
+\|f\|_{H^1(\Omega_h\cup\Omega)}
\Bigr),
\\[0.3em]
\|\Pi\widetilde p-p_h\|_{L^2(\Omega_h)}
&\le
\|\Pi(\widetilde p-I_h\widetilde p)\|_{L^2(\Omega_h)}
+\|\Pi I_h\widetilde p-p_h\|_{L^2(\Omega_h)}
\\
&\le
C\|\widetilde p-I_h\widetilde p\|_{L^2(\Omega_h)}
+\|e_p\|_{L^2(\Omega_h)}
\\
&\le
C\bigl(h^k+\|\phi-\phi_h\|_{H^1(\Omega_h)}\bigr)
\Bigl(
\|f\|_{H^{k-1}(\Omega_h\cup\Omega)}
+\|f\|_{H^1(\Omega_h\cup\Omega)}
\Bigr).
\end{align*}
Here, in the pressure estimate, we used the \(L^2\)-stability of
\(\Pi\), together with
\(e_p=p_h-\Pi I_h\widetilde p\).
    
\section{Numerical Experiments}
In this section, we present numerical experiments to illustrate the performance of the proposed
least-squares UnCut FEM and to support the theoretical results established in the previous sections.
In particular, we demonstrate the convergence behavior of the numerical approximations and
the robustness of the method with respect to the geometric approximation of the interface.
All numerical experiments are implemented using the open-source finite element software
\texttt{FEniCS}.

\subsection{Poisson Equation in Two Dimensions}

In this example, the computational domain is defined by
\[
\Omega = \{(x,y)\in\mathbb{R}^2 : (x-0.5)^2 + (y-0.5)^2 < 0.125\}.
\]
A sequence of uniformly refined triangular meshes with mesh size $h$ is generated on the background domain $[0,1]\times[0,1]$.
We begin with a uniform triangulation (with initial mesh size $h=0.1\sqrt{2}$) and then apply successive uniform refinements to obtain the mesh sequence used in the convergence study.
The exact solution $u_{\mathrm{exact}}$ is defined on $[0,1]\times[0,1]$ as
\[
u_{\mathrm{exact}} =
\bigl(0.125 - (x-0.5)^2 - (y-0.5)^2\bigr)\,
\exp(x)\,\sin(2\pi y).
\]
The corresponding source term is given by
\[
f_{\mathrm{exact}} = -\Delta u_{\mathrm{exact}}.
\]
The error between the numerical solution and the exact solution $u_{\mathrm{exact}}$ is measured on $\Omega_h$.

We implement the scheme \eqref{2.7} using polynomial degrees $k=1$ ($P^1$ elements) and $k=2$ ($P^2$ elements).  
The resulting convergence orders are shown in Figures~\ref{fig1} and~\ref{fig2}, respectively.  
As demonstrated by the numerical results, the proposed scheme exhibits not only the optimal $H^1$ convergence rate---consistent with the theoretical analysis---but also an optimal $L^2$ convergence rate in practice (observed but not proved, similar for the other numerical experiments).
\begin{figure}[!htbp]
    \centering
    \begin{subfigure}{0.48\textwidth}
        \centering
        \includegraphics[width=\linewidth]{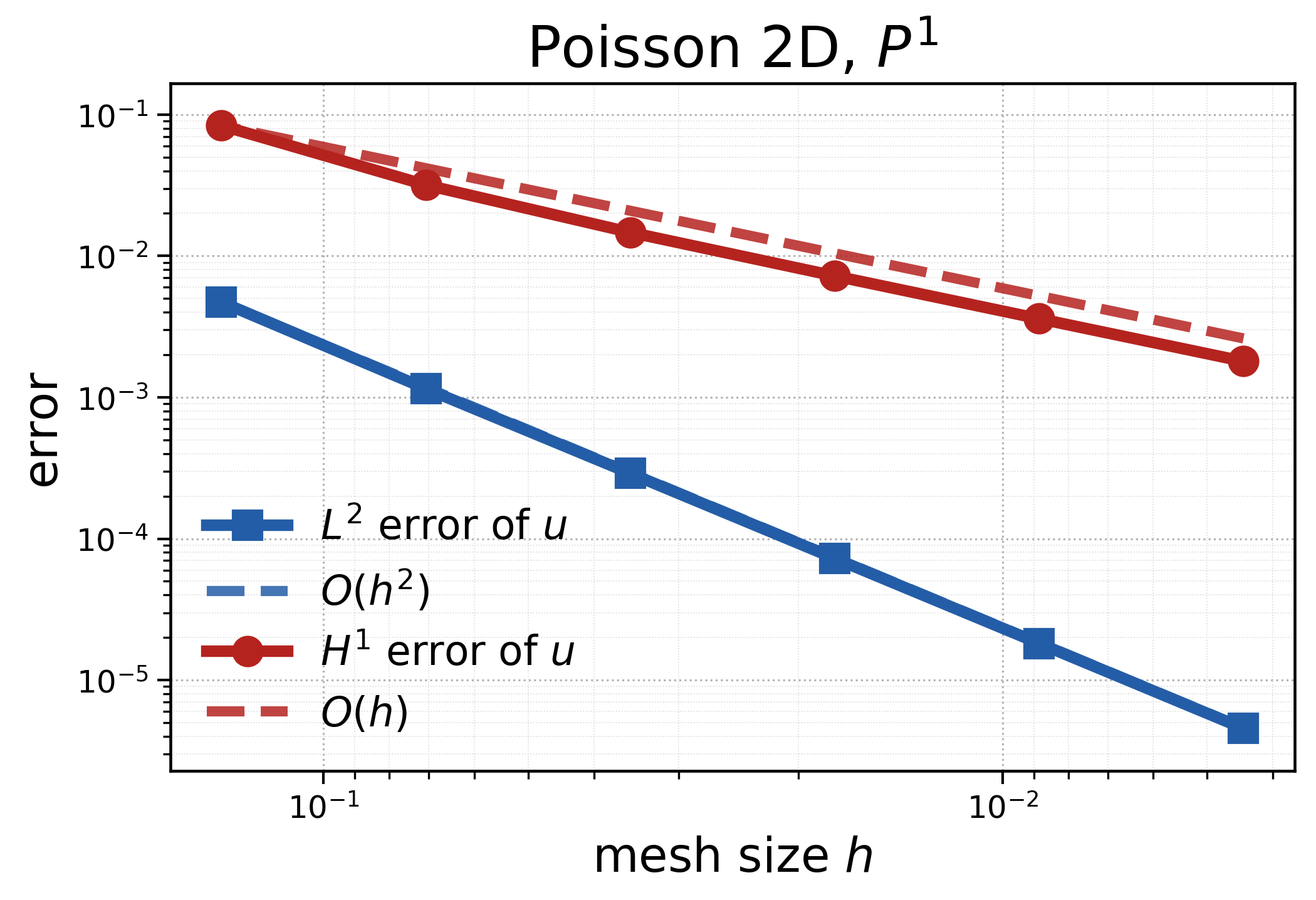}
        \caption{Poisson with $P^1$ elements}
        \label{fig1}
    \end{subfigure}\hfill
    \begin{subfigure}{0.48\textwidth}
        \centering
        \includegraphics[width=\linewidth]{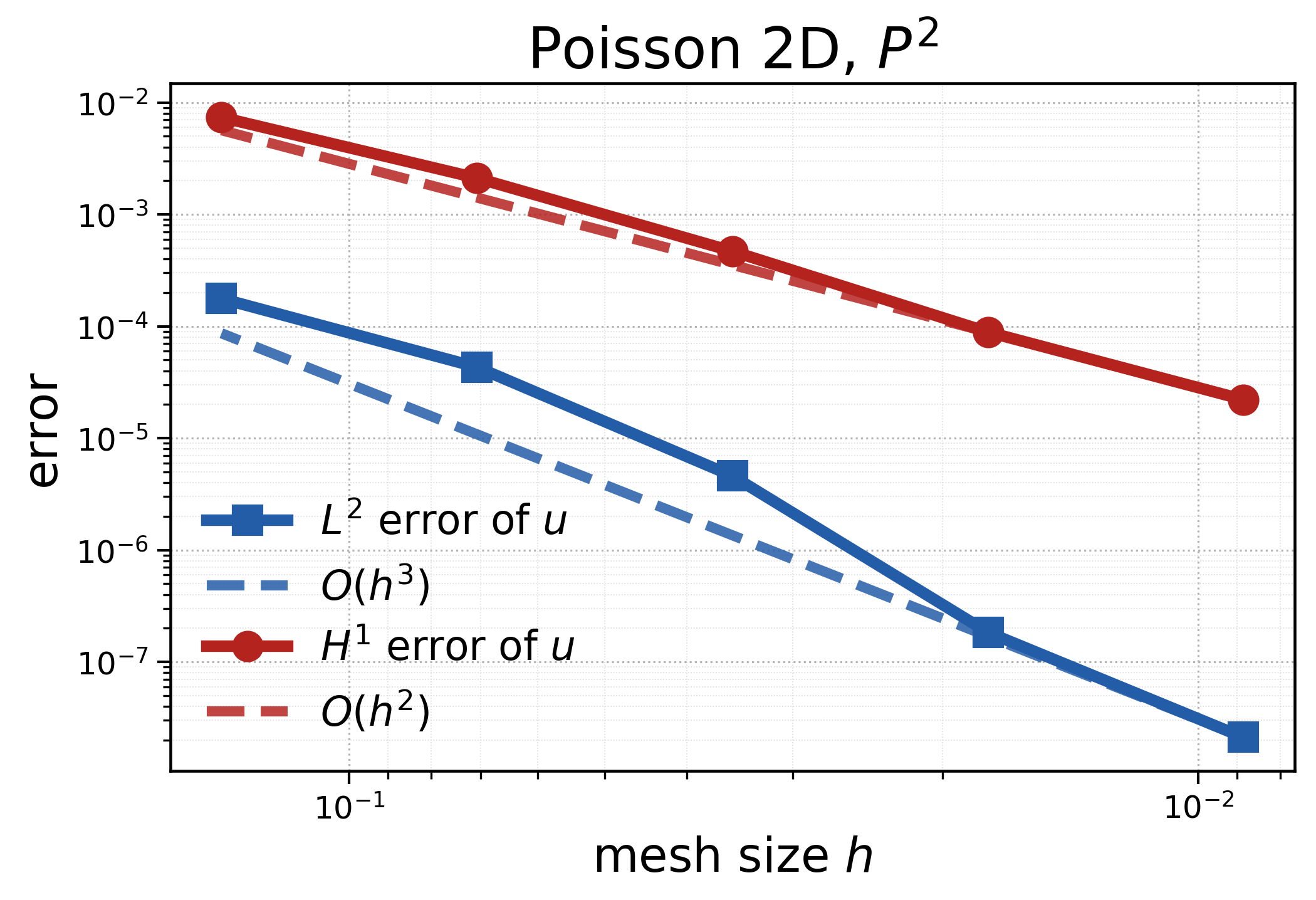}
        \caption{Poisson with $P^2$ elements}
        \label{fig2}
    \end{subfigure}
    \caption{Poisson with $P^1$ and $P^2$ elements}
\end{figure}
\FloatBarrier
\subsection{Poisson Equation in Three Dimensions}

In this example, we investigate the convergence behavior of the scheme \eqref{2.7} for the three-dimensional Poisson problem \eqref{Poisson}.
To this end, we consider a spherical computational domain $\Omega$ defined by
\[
\Omega = \{(x,y,z)\in\mathbb{R}^3 : (x-0.5)^2 + (y-0.5)^2 + (z-0.5)^2 < \frac18\}.
\]
and utilize a manufactured exact solution
\begin{align*}
u_{\mathrm{exact}}
&=\bigl(\tfrac18-(x-0.5)^2-(y-0.5)^2-(z-0.5)^2\bigr)
  \exp(x+2y-3z)\sin(2\pi y)\\
&\quad{}\times\cos(2\pi z+3\pi x).
\end{align*}
We then define $f_{\mathrm{exact}} = -\Delta u_{\mathrm{exact}}$ accordingly.

We present the convergence results for the implementation of the scheme \eqref{2.7} using $P^1$ elements.
As shown in Figure~\ref{fig3}, the numerical results demonstrate that the proposed method achieves optimal convergence rates in both the $L^2$ and $H^1$ norms.

\begin{figure}[!htbp]
    \centering
    \begin{subfigure}{0.48\textwidth}
        \centering
        \includegraphics[width=\linewidth]{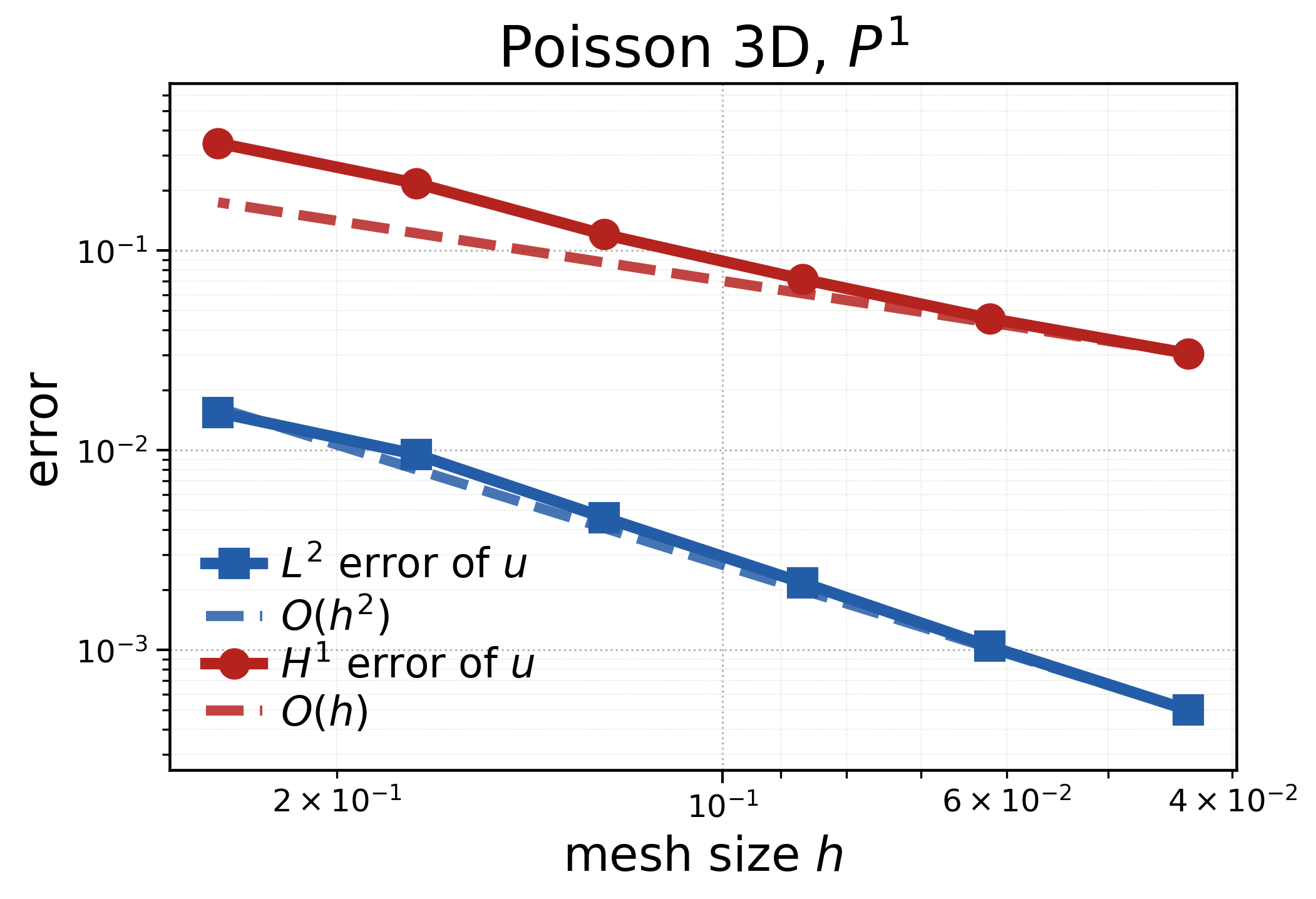}
        \caption{Poisson (3D) with $P^1$ elements}
        \label{fig3}
    \end{subfigure}\hfill
    \begin{subfigure}{0.48\textwidth}
        \centering
        \includegraphics[width=\linewidth]{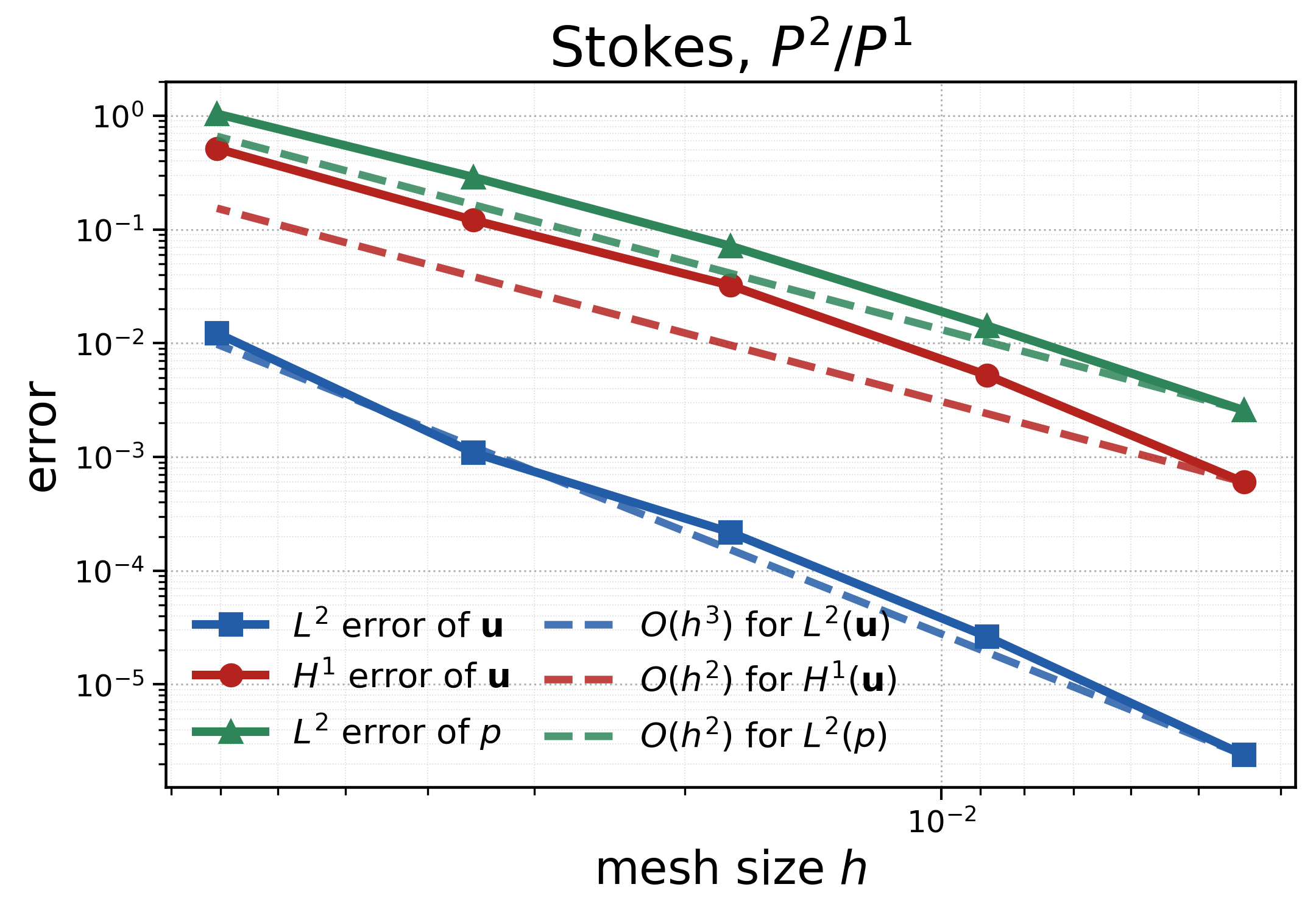}
        \caption{Stokes with $P^2/P^1$ elements}
        \label{fig4}
    \end{subfigure}
    \caption{Convergence results for the 3D Poisson and Stokes problems}
\end{figure}
\FloatBarrier

\subsection{Stokes Equations in Two Dimensions}
In this example, we consider the Stokes problem on the circular domain
\[
\Omega = \{(x,y)\in\mathbb{R}^2 : (x-0.5)^2 + (y-0.5)^2 < 0.25\}.
\]
A sequence of uniform triangular meshes with mesh size $h$ is generated on the background domain $[-1,2]\times[-1,2]$.
The exact velocity $u_{\mathrm{exact}}$ and pressure $p_{\mathrm{exact}}$ are well defined on $[-1,2]\times[-1,2]$ and are prescribed as
\begin{align}
u_{\mathrm{exact}}
&=
\Bigl(
 (y-0.5)\bigl((x-0.5)^2+(y-0.5)^2-0.25\bigr), \notag\\
&\qquad
-(x-0.5)\bigl((x-0.5)^2+(y-0.5)^2-0.25\bigr)
\Bigr),\\
p_{\mathrm{exact}}
&=
(x-0.5)+(y-0.5)+50(x-0.5)^3+\sin(100(x-0.5)).
\end{align}
The corresponding source term is given by
\[
f_{\mathrm{exact}} = -\Delta u_{\mathrm{exact}} + \nabla p_{\mathrm{exact}}.
\]

The numerical results are reported in Figure~\ref{fig4}.  
As can be observed, the numerical errors are in good agreement with the theoretical analysis. In particular, the velocity approximation achieves the optimal second-order convergence rate in the $H^1$ norm, while the pressure approximation converges with the optimal second-order rate in the $L^2$ norm, as explained in Remark~\ref{rem:projp}. These results confirm the stability and accuracy of the proposed scheme for the Stokes problem on unfitted meshes.

\subsection{Comparison with \texorpdfstring{$\phi$}{phi}-FEM} We perform a numerical experiment comparing the original
$\phi$-FEM with the proposed method in order to investigate their
robustness with respect to the stabilization parameter.

In the original \(\phi\)-FEM formulation for the Stokes problem in \cite{phifem_stokes}, two
stabilization parameters, \(\sigma\) and \(\sigma_u\), are used; in the
experiment, we take them to be the same parameter \(\sigma\). For the proposed method, we introduce a stabilization-weight
parameter $\sigma>0$ by multiplying both the stabilization block
and its matching consistency term in \eqref{3.7} by $\sigma$. More
precisely, the stabilization terms on the left-hand side become
\[
\sigma\left[
h^2\sum_{T\in\mathcal T_h^\Gamma}
\bigl(
-2\operatorname{div}Du_h+\nabla p_h,
-2\operatorname{div}Dv_h+\nabla q_h
\bigr)_{L^2(T)}
+j_h^{(2)}(u_h,v_h)
\right],
\]
while the corresponding term on the right-hand side becomes
\[
\sigma h^2\sum_{T\in\mathcal T_h^\Gamma}
\bigl(
f,-2\operatorname{div}Dv_h+\nabla q_h
\bigr)_{L^2(T)}.
\]
All other terms in \eqref{3.7} remain unchanged.

On the background domain $[-1, 2]\times[-1,2]$, we consider the Stokes equations with the following domain, and exact solution:
\[
\begin{aligned}
  \Omega
  &=
  \left\{(x,y)\in(0,1)^2:
  \left(x-\tfrac12\right)^2+\left(y-\tfrac12\right)^2<\tfrac14
  \right\},
  \qquad
  \phi
  =
  \left(x-\tfrac12\right)^2+\left(y-\tfrac12\right)^2-\tfrac14,\\
  u_{\mathrm{ex}}
  &=
  \left(
    \left(y-\tfrac12\right)\phi,
    -\left(x-\tfrac12\right)\phi
  \right),\\
  p_{\mathrm{ex}}
  &=
  \left(x-\tfrac12\right)+\left(y-\tfrac12\right)
  +50\left(x-\tfrac12\right)^3
  +\sin\!\left(100\left(x-\tfrac12\right)\right)
  +100\,
  \frac{\phi/0.01}{\sqrt{1+(\phi/0.01)^2}}.
\end{aligned}
\]
It follows that \(\nabla\cdot u_{\mathrm{ex}}=0\) in $\Omega$ and \(u_{\mathrm{ex}}=0\) on $\partial\Omega$. We then define
\(
  f=-\Delta u_{\mathrm{ex}}+\nabla p_{\mathrm{ex}}.
\)

For a fair comparison, both methods employ the same \(P^2/P^1\) Taylor--Hood finite element pair and the same active mesh \(\Omega_h\).
Computations are performed with \(h=\sqrt{2}/80\) and \(h=\sqrt{2}/240\). Since the
discrete pressure is subject to a zero-mean constraint over \(\Omega_h\),
we normalize the exact pressure accordingly by setting
\(p_{\mathrm{ex}}^0
  =
  p_{\mathrm{ex}}
  -
  \frac{1}{|\Omega_h|}
  \int_{\Omega_h} p_{\mathrm{ex}}\,dx.
\)
We then compute the relative velocity and pressure errors
\[
\begin{aligned}
  E_u
  &=
  \frac{
    \|u_h-u_{\mathrm{ex}}\|_{H^1(\Omega_h)}
  }{
    \|u_{\mathrm{ex}}\|_{H^1(\Omega_h)}
  },
  &
  E_p
  &=
  \frac{
    \|p_h-p_{\mathrm{ex}}^0\|_{L^2(\Omega_h)}
  }{
    \|p_{\mathrm{ex}}^0\|_{L^2(\Omega_h)}
  }.
\end{aligned}
\]

The results in the following figure demonstrate an advantage of the proposed method beyond the particular choice \(\sigma=1\). When \(\sigma\) is small,
the errors produced by the original \(\phi\)-FEM increase significantly,
whereas those of the proposed method remain controlled. For large values of
\(\sigma\), the accuracy of the original \(\phi\)-FEM for the pressure variable also deteriorates rapidly. This difference is particularly pronounced for the pressure approximation, for which the proposed method is more accurate over a broad
range of values of \(\sigma\). These results indicate that our method is more robust with respect to the choice of the stabilization parameter and therefore requires less parameter tuning.

\begin{figure}[!htbp]
  \centering
  \includegraphics[width=0.95\linewidth]
  {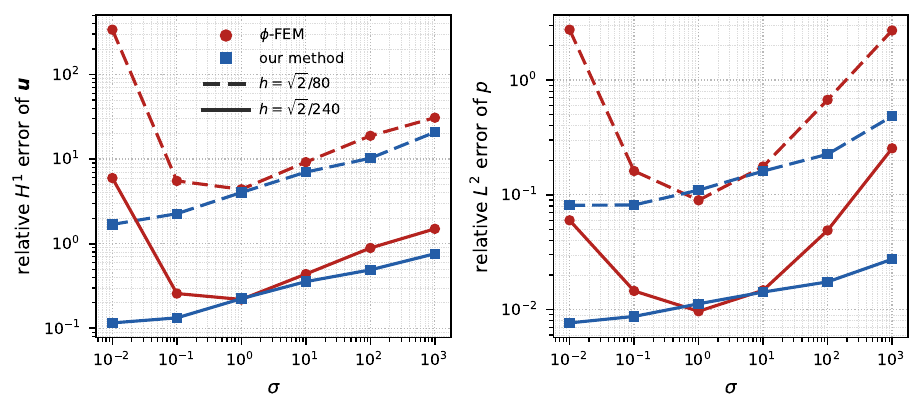}
  \caption{Sensitivity of the relative velocity \(H^1\)-error
  (left) and pressure \(L^2\)-error (right) to the stabilization
  parameter \(\sigma\), for the original \(\phi\)-FEM and the
  proposed method with \(h=\sqrt{2}/80\) and
  \(h=\sqrt{2}/240\).}
  \label{fig:parameter-sensitivity-all-errors}
\end{figure}
\FloatBarrier
\section{Conclusion}

We have introduced and analyzed a novel unfitted finite element method, called the $H^{-1}$ least-squares UnCut FEM, for the Poisson and Stokes equations on domains defined by a level set function. The proposed method inherits the implementation advantages of the existing $\phi$-FEMs, while the design of the least-squares formulation eliminates the need for the stabilization parameters to be sufficiently large, thereby improving both robustness of computation and simplicity of implementation. The formulation in the $H^{-1}$ setting is the key to enabling the use of $C^0$ finite elements while retaining these favorable properties. Optimal-order $H^1$-convergence of the velocity and $L^2$-convergence of the pressure (up to a constant) are rigorously established for the Stokes equations, together with optimal-order $H^1$-convergence for the Poisson equation.

Several directions remain for future research, including the theoretical derivation of optimal-order $L^2$-error estimates as well as extensions to Neumann/Robin boundary conditions and time-dependent problems. 


\bibliographystyle{siam}

\bibliography{LeastSquare_Phi}
\end{document}